\newcommand{\Z}{\mathbb{Z}}
\newcommand{\R}{\mathbb{R}}
\newcommand{\C}{\mathbb{C}}
\newcommand{\Q}{\mathbb{Q}}
\newcommand{\T}{\mathbb{T}}
\newcommand{\gerh}{\mathfrak{h}}
\newcommand{\gerk}{\mathfrak{k}}
\newcommand{\gersl}{\mathfrak{sl}}
\newcommand{\gert}{\mathfrak{t}}
\newcommand{\calO}{\mathcal{O}}
\newcommand{\calH}{\mathcal{H}}
\newcommand{\om}{\omega}
\newcommand{\Om}{\Omega}
\newcommand\vep{\varepsilon}
\newcommand\vphi{\varphi}
\newcommand{\Cstar}{\C^*}
\newcommand\Hom{\mathop{\mathrm{Hom}}\nolimits}
\newcommand\an{\mathop{\mathrm{an}}\nolimits}
\newcommand\Aut{\mathop{\mathrm{Aut}}\nolimits}
\newcommand\codim{\mathop{\mathrm{codim}}\nolimits}
\newcommand\Image{\mathop{\mathrm{Im}}\nolimits}
\newcommand\id{\mathop{\mathrm{id}}\nolimits}
\newcommand\Ker{\mathop{\mathrm{Ker}}\nolimits}
\newcommand\Lie{\mathop{\mathrm{Lie}}\nolimits}
\newcommand\Mat{\mathop{\mathrm{Mat}}\nolimits}
\newcommand\Proj{\mathop{\mathrm{Proj}}\nolimits}
\newcommand\rank{\mathop{\mathrm{rank}}\nolimits}
\newcommand\reg{\mathop{\mathrm{reg}}\nolimits}
\newcommand\Sing{\mathop{\mathrm{Sing}}\nolimits}
\newcommand\Span{\mathop{\mathrm{Span}}\nolimits}
\newcommand\Spec{\mathop{\mathrm{Spec}}\nolimits}
\def\fr<#1/#2>{\frac{#1} {#2}}
\newcommand{\LargeO}{\text{{\huge{0}}}}
\newcommand{\vcbig}[1]{\multicolumn{2}{c}{$\mbox{\smash{\Huge $#1$}}$}}
\newcommand{\xrightarrowdbl}[2][]{%
  \xrightarrow[#1]{#2}\mathrel{\mkern-14mu}\rightarrow
}
\newtheorem{thm}{Dont use this}[section]
\newtheorem{theorem}[thm]{Theorem}
\newtheorem{proposition}[thm]{Proposition}
\newtheorem{corollary}[thm]{Corollary}
\newtheorem{lemma}[thm]{Lemma}
\newtheorem{fact}[thm]{Fact}
\newtheorem{problem}[thm]{Problem}
\newtheorem{question}[thm]{Question}
\newtheorem{assumption}[thm]{Assumption}
\newtheorem*{definition*}{Definition}
\newtheorem*{theorem*}{Theorem}
\newtheorem*{proposition*}{Proposition}
\newtheorem*{corollary*}{Corollary}
\newtheorem*{lemma*}{Lemma}
\newtheorem*{problem*}{Problem}
\newtheorem*{question*}{Question}
\newtheorem*{conjecture*}{Conjecture}
\newtheorem{definition and lemma}[thm]{Definition $\&$ Lemma}
\newtheorem{definition and proposition}[thm]{Definition $\&$ Proposition}
\newtheorem*{remark*}{Remark}
\theoremstyle{definition}
\newtheorem{definition}[thm]{Definition}
\newtheorem{example}[thm]{Example}
\newtheorem{remark}[thm]{Remark}
\theoremstyle{definition}
\theoremstyle{plain}
\title{%
The universal covers of hypertoric varieties and Bogomolov's decomposition
}
\author[T. Nagaoka]{Takahiro Nagaoka}
\address[Takahiro Nagaoka]{Department of Mathematics,
	Graduate School of Science,
	Kyoto University,
	Kyoto, 606-8522, Japan}
\email{tnagaoka@math.kyoto-u.ac.jp}
\subjclass[2010]{14E20, 
53D20, 
14M25, 
52B40, 
52C35 
}
\date{}
\keywords{hypertoric varieties, conical symplectic varieties, universal cover, fundamental group of regular locus, Bogomolov's decomposition, uniqueness of symplectic structure}
\begin{document}
\pagestyle{plain}
\begin{abstract}
In this paper, we study the (singular) universal cover of an affine hypertoric variety. We show that it is given by another affine hypertoric variety, and taking the universal cover corresponds to taking the simplification of the associated hyperplane arrangement. Also, we describe the fundamental group of the regular locus of an affine hypertoric variety in general. 
In the latter part, we show that the hamiltonian torus action is block indecomposable if and only if $\Cstar$-equivariant symplectic structures on the associated hypertoric variety are unique up to scalar. 
In particular, we establish the analogue of Bogomolov's decomposition for hypertoric varieties, which is proposed by Namikawa for general conical symplectic varieties. As a byproduct, we show that if two affine (or smooth) hypertoric varieties are $\Cstar$-equivariant isomorphic as varieties, then they are also the hamiltonian torus action equivariant isomorphic as symplectic varieties. This implies that the combinatorial classification actually gives the classification of these varieties up to $\Cstar$-equivariant isomorphisms.   
\end{abstract} 
\maketitle
\setcounter{tocdepth}{1}
\tableofcontents

\section{Introduction}\label{sec:Intro}


Hypertoric varieties were introduced as a hyperk\"{a}hler analogue of toric varieties by Bielawski and Dancer (\cite{BD}), and they have been extensively studied by many authors (\cite{HS}, \cite{Kocohomology}, \cite{Kovariation}, \cite{PW}, etc). An (affine) hypertoric variety has a symplectic form $\om$ on its regular locus and admits a good $\Cstar$-action as $\om$ is {\it homogeneous} with some weight $\ell$, i.e., $s^*\om=s^\ell\om$. In general, such affine varieties $(Y, \om)$ are called {\it conical symplectic varieties}, and its symplectic resolution $\pi : (\widetilde{Y}, \widetilde{\om}) \to (Y, \om)$ (i.e., $\pi^*\om$ extends to a symplectic form $\widetilde{\om}$ on $\widetilde{Y}$) has been extensively studied by many authors from the view point of not only algebraic geometry but also geometric representation theory (cf.\ \cite{BPW1}, \cite{BLPW2}). 


In algebro-geomteric context, for conical symplectic varieties, their fundamental geometric properties, deformation theory, birational geometry are extensively studied. On the other hand, their finite coverings, in particular, the ``universal coverings'' which are also conical symplectic varieties defined below have not been well-studied (recently, finite covers of nilpotent orbit closures are studied in \cite{Nam:new}). In \cite{Nam:equi}, Namikawa proposed the following problem. 
\newtheorem*{problem:fundamental}{Problem \ref{problem:fundamental}}
\begin{problem:fundamental}{\rm (cf.\ \cite[Problem 7.2]{Nam:equi})}\\
For a conical symplectic variety $(Y, \om)$, is the fundamental group $\pi_1(Y_{\reg})$ of the regular part of $Y$ finite ?
\end{problem:fundamental}
\noindent In \cite{Nam:fund}, as a partial general answer, he proved the finiteness of the algebraic fundamental group $\hat{\pi}_1(Y_{\reg})$. If $\pi_1(Y_{\reg})$ is finite, one can consider the (singular) universal covering of $Y$ as follows (for the detail, see Definition \& Proposition \ref{def and prop:universal cover}).  
\newtheorem*{def and prop:universal cover}{Definition \& Proposition \ref{def and prop:universal cover}}
\begin{def and prop:universal cover}{\rm (The universal covers of conical symplectic varieties)}\\
Let $(Y, \om)$ be a conical symplectic variety with $m:=|\pi_1(Y_{\reg})|<\infty$. Then, there exists a unique conical symplectic variety $(\overline{Y}, \overline{\om})$ and a finite $\Cstar$-equivariant morphism $\vphi : (\overline{Y}, \overline{\om}) \to (Y, \om)$ 
such that its restriction $\vphi| : \vphi^{-1}(Y_{\reg}) \to Y_{\reg}$ gives the universal cover of $Y_{\reg}$. 
In this paper, we call $\overline{Y}$ the {\rm universal covering of $Y$}. 
\end{def and prop:universal cover}
\noindent By taking the universal cover, we can often obtain a new example of conical symplectic varieties (cf.\ \cite{Nam:new}). On the other hand, in general, for any conical symplectic variety $(Y, \om)$ and a finite subgroup $G$ of $\Aut^{\Cstar}(Y)$ preserving $\om$, $Y/G$ is also a conical symplectic variety (cf.\ \cite[Proposition 2.4]{Bea}). This implies that the universal cover is important in classification since any conical symplectic varieties is obtained by a finite quotient of the universal one. Anyway, it is natural to ask the following question: 
\newtheorem*{problem:universalcover}{Problem \ref{problem:universalcover}}
\begin{problem:universalcover}
For a given conical symplectic variety $(Y, \om)$ with $|\pi_1(Y_{\reg})|<\infty$, describe the universal cover $\vphi : (\overline{Y}, \overline{\om}) \to (Y, \om)$ and $\pi_1(Y_{\reg})$. 
\end{problem:universalcover}

\noindent Another motivation to study the universal cover is coming from the analogue of Bogomolov's decomposition as the following: 
\newtheorem*{problem:bogomolov}{Problem \ref{problem:bogomolov}}
\begin{problem:bogomolov}{\rm (\cite[Problem 7.3]{Nam:equi}, Bogomolov's decomposition)}\\
For any conical symplectic variety $(Y, \om)$ with $|\pi_1(Y_{\reg})|<\infty$, is its universal cover $(\overline{Y}, \overline{\om})$ decomposed into the product 
\[(\overline{Y}, \overline{\om})\cong\prod_{i}{(Y_i, \om_i)}\]
of {\it irreducible} conical symplectic varieties $(Y_i, \om_i)$, that is $\om_i$ is the unique conical symplectic structure on $Y_i$ up to scalar ? 
\end{problem:bogomolov}

In this paper, we study the universal cover of affine hypertoric varieties. Our main theorems give complete answers for the above problems for affine hypertoric varieties and give an interpretation of taking the universal cover in terms of combinatorics of the associated hyperplane arrangement (or matroid). Also, we will give an application to the classification of hypertoric varieties as byproducts.   
\vspace{5pt}

Now we set up notations to state our results precisely. 
Let $A\hspace{-2pt}=\hspace{-2pt}[\bm{a_1}, \ldots, \bm{a_n}]$ be a unimodular matrix of rank $d$ and take $B^T\hspace{-2pt}=\hspace{-2pt}[\bm{b_1}, \ldots ,\bm{b_n}]$ so that  the following is exact: 
\begin{equation*}\setlength{\abovedisplayskip}{0pt}
\begin{tikzcd}
0\ar[r]&\Z^{n-d}\ar[r, "{B}"] &\Z^{n}\ar[r, "{A}"]&\Z^d\ar[r]&0         
\end{tikzcd}.\setlength{\belowdisplayskip}{0pt}
\end{equation*}
Then by using the natural Hamiltonian $\T_\C^d$-action on $\C^{2n}$ induced from  $A^T: \T_\C^d\hookrightarrow \T_\C^n$, we define a hypertoric variety $Y_A(\alpha):=\C^{2n}/\hspace{-3pt}/\hspace{-3pt}/_\alpha \T_\C^d:=\mu^{-1}(0)/\hspace{-3pt}/_\alpha\T_\C^d$ as the symplectic reduction, where $\alpha\in\Z^d$ is a GIT parameter, and $\mu=\mu_A: \C^{2n}\to\C^d$ is the $\T_\C^d$-invariant moment map. By definition, we have a natural projective morphism $\pi_\alpha : Y_A(\alpha)\to Y_A(0)$. Then, $Y_A(0)$ is a conical symplectic variety, and for generic $\alpha$, $\pi_\alpha$ gives a projective symplectic resolution. 
For any projective toric variety, we can consider the associated polytope and read off many geometric properties from this polytope. Similarly, for any hypertoric variety $Y_A(\alpha)$, one can consider the associated hyperplane arrangement $\calH_B^\alpha:=\Set{H_i : \langle \bm{b_i}, -\rangle=-\widetilde{\alpha}_i}$ where $\widetilde{\alpha}$ satisfies $\alpha=A\widetilde{\alpha}$ ($\calH_B^{\alpha}$ is well-defined up to translations). We consider $\calH_B^{\alpha}$ as a multiset of hyperplanes.  

In section 5, we consider the universal cover of an affine hypertoric variety $Y_A(0)$. 
\vspace{1pt}

\renewcommand{\arraystretch}{1}\arraycolsep=2pt
\noindent First, we can assume {\setlength\arraycolsep{1pt}$B^T\hspace{-5pt}=$\raise1ex\hbox{$\begin{array}{rcccccccl}
&\multicolumn{3}{c}{\smash{\overbrace{\hspace{6ex}}^{\ell_1}}}&&\multicolumn{3}{c}{\smash{\overbrace{\hspace{6ex}}^{\ell_s}}}&\\
\ldelim[{1}{3pt}[]&\bm{b^{(1)}},&\cdots,&\bm{b^{(1)}},&\cdots,&\bm{b^{(s)}},&\cdots,&\bm{b^{(s)}}&\rdelim]{1}{0.1pt}[]\\
\end{array}$} \ , where if $k_1 \neq k_2$, then $\bm{b^{(k_1)}} \hspace{-2pt} \neq \hspace{-2pt} \pm\bm{b^{(k_2)}}$. Then, we consider the {\it simplification} $\overline{B}^T \hspace{-4pt} :=[\bm{b^{(1)}}, \ldots,  \bm{b^{(s)}}]$} of $B^T$, and take $\underline{A}=[\bm{\underline{a}_1}, \ldots, \bm{\underline{a}_s}]$ satisfying the following exact sequence (cf.\ section \ref{sec:universal}):  
\begin{equation*}
\begin{tikzcd}
0\ar[r]&\Z^{n-d}\ar[r, "\overline{B}"]\ar[d, equal]&\Z^{s} \ar[r, "\underline{A}" ]\ar[d, hook, "B_0"]&\Z^{d-(n-s)}\ar[r]\ar[d, hook, "i"]&0\\
0\ar[r]&\Z^{n-d}\ar[r, "B"]&\Z^n\ar[r, "A"]&\Z^d\ar[r]&0
\end{tikzcd}, \ \ \ 
\begin{tikzcd}
\T_\C^s&\T_\C^{d-(n-s)}\ar[l, hook', "\underline{A}^T"']\\
\T_\C^n\ar[u, two heads, "B_0^T"']&\T_\C^d\ar[l, hook', "{A}^T"']\ar[u, two heads, "i^*"']
\end{tikzcd}.
\end{equation*}
Then, we call $Y_{\underline{A}}(0)$ the {\it simplification}  of $Y_A(0)$. Actually, from the combinatorial point of view, taking simplification corresponds to replacing all multiplicated hyperplanes of $\calH_B^0$ by a single one and obtaining $\calH_{\overline{B}}^0$ as the following.  
\begin{figure}[h]
\begin{center}
\includegraphics[width=200pt, height=40pt]{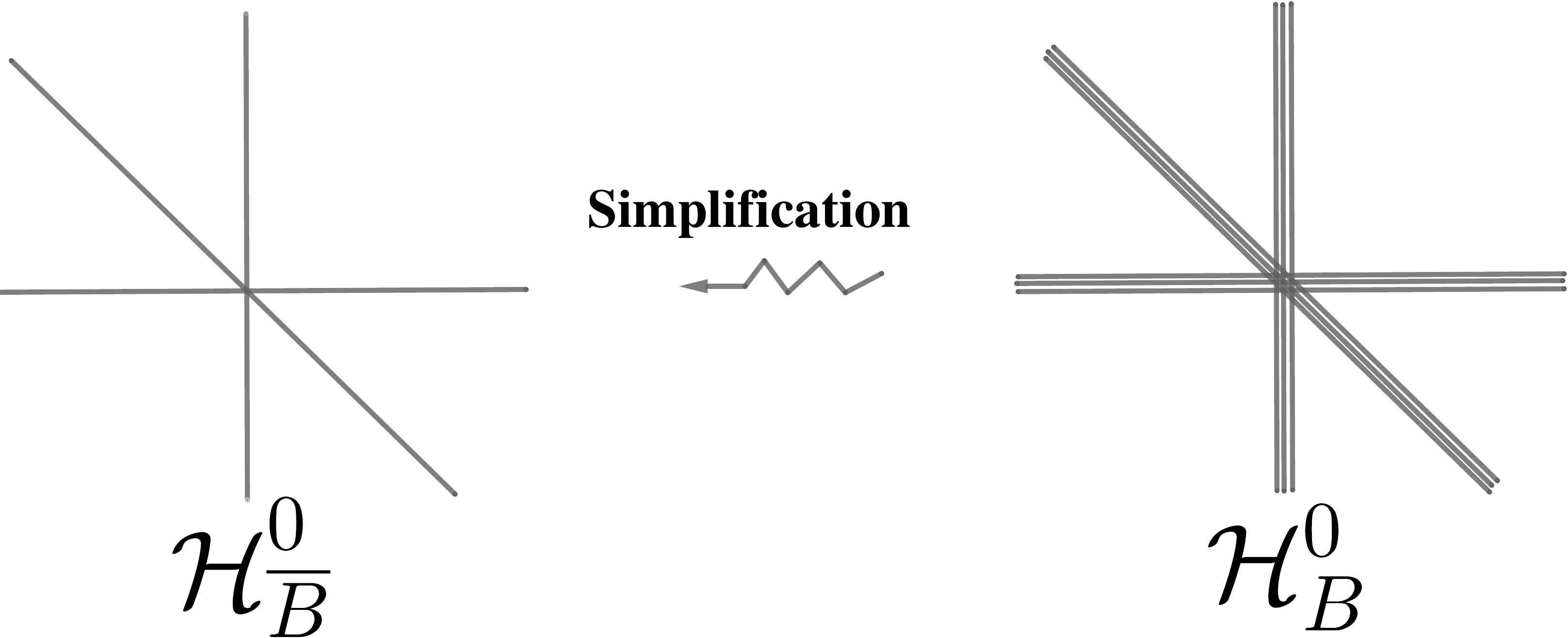}
\end{center}
\end{figure}

\noindent Now, consider the following embeddings: 
\[\begin{tikzcd}\Gamma:=\prod_{k=1}^s{\Z/\ell_k\Z}\ar[r, hook]& \T_\C^s& \T_\C^{d-(n-s)} \ar[l, hook', "\underline{A}^T"'] \end{tikzcd},\]
where we consider $\Gamma$ as a multiplicative subgroup of $\T_\C^s$ in the usual way. Then we have the answers for Problem \ref{problem:fundamental} and Problem \ref{problem:universalcover} as the following. 

\newtheorem*{thm:fundamentalgrp}{Theorem \ref{thm:fundamentalgrp} \& Proposition \ref{prop:explicit}}
\begin{thm:fundamentalgrp}
In the above setting, for $Y_A(0)$ and its simplification $Y_{\underline{A}}(0)$, there exists the following {$\Cstar$-equivariant} commutative diagram:  
\[\begin{tikzcd}
Y_{\underline{A}}(0)\ar[r, "\vphi"]\ar[d]&Y_A(0)\\
Y_{\underline{A}}(0)/(\Gamma/\Gamma\cap\T_\C^{d-(n-s)})\ar[ur, "\cong"']
\end{tikzcd}.\]
Moreover, {$\vphi$ preserves symplectic structures,} 
and $\Gamma/\Gamma\cap\T_\C^{d-(n-s)}$ acts on $\vphi^{-1}(Y_A(0)_{\reg})$ freely. In particular, we have 
\[\pi_1(Y_A(0)_{\reg})\cong\Gamma/\Gamma\cap\T_\C^{d-(n-s)}=\Gamma/q(\Ker \widetilde{B}^T),\]
where $\widetilde{B}^T:=[m_1\bm{b^{(1)}}, \cdots, m_s\bm{b^{(s)}}]$, $q : \Z^s \to \Gamma$, and $m_k:=\prod_{i\neq k}{\ell_i}$. 
\end{thm:fundamentalgrp}
\noindent This theorem says that the universal cover of an affine hypertoric variety $Y_A(0)$ is obtained by the simplification $Y_{\underline{A}}(0)$. As a corollary, one can characterize when $Y_A(0)_{\reg}$ is simply-connected as the following. 
\newtheorem*{cor:converse}{Corollary \ref{cor:converse}}
\begin{cor:converse}
The following are equivaelnt: 
\begin{itemize}
\item[(i)] $\codim \Sing(Y_A(0))\geq4$. 
\item[(ii)] $\pi_1(Y_A(0)_{\reg})=0$.
\item[(iii)] $Y_A(0) (\text{or} \ \calH_B^0)$ is simple, i.e., $ \ell_k=1$ for any $k$. 
\end{itemize}
\end{cor:converse}
\noindent As we will note in Remark \ref{rem:nontrivialgeneral}, the equivalence ``(i) $\Leftrightarrow$ (ii)'' is very special, and it doesn't hold for general conical symplectic varieties. 
\vspace{2pt}

In the latter part, we consider Bogomolov's decomposition for (not necessarily simple) affine hypertoric varieties. In general, we can assume that $A$ is of the form $A=[O_p | A_1\oplus\cdots\oplus A_r]$, where each $A_m$ is indecomposable and $O_p$ is the $d\times p$-zero matrix. We say $A$ is {\it block indecomposable} if $p=0$ and $r=1$. Then, we have the following: 

\newtheorem*{thm:main final}{Theorem \ref{thm:main final}}
\begin{thm:main final}{\rm (Bogomolov's decomposition for affine hypertoric varieties)}\\
If $A=[O_p | A_1\oplus\cdots\oplus A_r]$, then the following gives a decomposition into irreducible conical symplectic varieties: 
\[(Y_A(0), \om) \cong \prod_{i=1}^p{(\C^2, dz\wedge dw)}\times\prod_{m=1}^r{(Y_{A_m}(0), \om_m)}.\]
In particular, $A$ is block indecomposable if and only if $Y_A(0)$ is an irreducible conical symplectic variety. In this case, any (not necessarily symplectic) homogeneous 2-form on $Y_A(0)_{\reg}$ with weight 2 is unique up to scalar. 
\end{thm:main final}

\noindent As we will note in Corollary \ref{cor:generalhypertoric}, the same statement holds for any hypertoric varieties $Y_A(\alpha)$. The key steps of the proof of the uniqueness of conical symplectic structures are twofolds. First, through the universal covering morphism $\vphi : Y_{\underline{A}}(0) \to Y_A(0)$, we can reduce to the case of $Y_{\underline{A}}(0)$. Next, we will show that $\mu_{\underline{A}}^{-1}(0)$ is a normal complete intersection and smooth in codimension 3 (cf.\ Proposition \ref{prop:keyreflexive}). Then, by Vetter's criterion on the reflexiveness of $\Om^p_{Z}$ for any normal complete intersection $Z$, we can conclude that any homogeneous 2-form on $Y_{\underline{A}}(0)_{\reg}$ comes from a homogeneous 2-form on $\mu_{\underline{A}}^{-1}(0)$.

In the final section, we determine the space of homogeneous 2-forms on decomposable hypertoric varieties (cf.\ Proposition \ref{prop:generalform}). As an application, we show the following refined version of the classification result in \cite[{Theorem 4.2}]{Nag}. 

\newtheorem*{cor:classification}{Corollary \ref{cor:classification}}
\begin{cor:classification}(cf.\ Remark \ref{rem:combinatorial})\\
For any two affine hypertoric varieties $Y_A(0)$ and $Y_{A'}(0)$, 
the following are equivalent: 
\begin{itemize}
\item[(i)] $Y_A(0)$ and $Y_{A'}(0)$ are $\Cstar$-equivariant isomorphic as algebraic varieties. 
\item[(ii)] $(Y_A(0), \om)$ and $(Y_{A'}(0), \om')$ are $\Cstar\times\T_\C^{n-d}$-equivariant isomorphic as symplectic varieties, where $\T_\C^{n-d}$-action is the remaining Hamiltonian torus action. 
\item[(iii)] $A\sim A'$ (cf.\ Definition \ref{def:operation}). In other words, $M(A)\cong M(A')$ as matroids, where $M(A)$ is the associated vector matroid. 
\end{itemize}
The same statement holds for smooth hypertoric varieties, where we replace (iii) by  \\
(iii') The zonotope tilings obtained from the hyperplane arrangements $\calH_B^{\alpha}$ and $\calH_{B'}^{\alpha'}$ are same in the sense of \cite{AP}.   
\end{cor:classification}
This corollary can be seen as the (weaker) hypertoric analogue of Berchtold's theorem \cite{Ber} on toric varieties, which says that for any two toric varieties, they are isomorphic as abstract varieties if and only if they are isomorphic as toric varieties. In particular, this means that the combinatorial classification (of the associated {\it fans}) actually gives the classification of toric varieties as abstract varieties. 


This paper is organized as follows. In section 2, we review the definition of conical symplectic varieties $(Y, \om)$, and we define the universal cover of them. We also give a sufficient condition that $Y_{\reg}$ is simply-connected (cf.\ Proposition \ref{prop:codim4fundamental}). In section 3, we review hypertoric varieties and give some examples. In section 4, for the later discussion, we recall a result on the stratification of affine hypertoric varieties $Y_A(0)$. In section 5, we prove that the universal cover of $Y_A(0)$ is given by simplification $Y_{\underline{A}}(0)$. Moreover, we describe $\pi_1(Y_A(0)_{\reg})$. In section 6, we give a concrete computation of $\pi_1(Y_A(0)_{\reg})$ and a necessary and sufficient condition that $\pi_1(Y_A(0)_{\reg})$ is trivial. In Remark \ref{rem:alternative}, we also note an alternative possible  way to the computation. In section 7, we establish the Bogomolov's decomposition for affine hypertoric varieties $Y_A(0)$. 
In section 8, we determine the space of homogeneous 2-forms on a general $Y_A(0)$. Then, we give an application to the classification of hypertoric varieties.  

\medskip

\noindent\textbf{Acknowledgements.}
The author wishes to express his gratitude to his supervisor Yoshinori Namikawa for stimulating discussions. He is also greatly indebted to Hiraku Nakajima for giving some comments on his presentation, which leads to start this project. He is grateful to Ryo Yamagishi for spending much time to discuss with him. The author is also grateful to Nicholas Proudfoot for sharing his nice idea on a computation of the fundamental group (in Remark \ref{rem:alternative}). He also wishes to express his thanks to Masahiko Yoshinaga for letting him know the reference \cite{Bj}. He is also grateful to Makoto Enokizono for pointing out an error in the previous version. The author is partially supported by Grant-in-Aid for JSPS Fellows 19J11207.



\section{Conical symplectic varieties and the universal coverings}
In this section, we introduce conical symplectic varieties, and we note that for any conical symplectic variety $(Y, \om)$ with $|\pi_1(Y_{\reg})|<\infty$, its (possibly singular) universal cover is also a conical symplectic variety. Second, we prove that if $Y$ is smooth in codimension 3, i.e., $\codim \Sing(Y)\geq4$, and $Y$ admits a  symplectic resolution, then $Y_{\reg}$ is simply-connected (cf. Corollary \ref{cor:hypertoric trivial}). 
\vspace{2pt}

A pair $(Y, {\om})$ of a normal algebraic variety $Y$ and {a} 2-form ${\om}$ on the regular locus $Y_{\reg}$ is called a {\it symplectic variety} if ${\om}$ is symplectic, and there exists (or equivalently, for any) a resolution $\pi: \widetilde{Y}\to Y$ such that the pull-back $\pi^*\om$ of ${\om}$ extends to {an} algebraic 2-form $\widetilde{\om}$ on $\widetilde{Y}$, i.e., $Y$ has only canonical singularities. Moreover, a resolution $\pi: (\widetilde{Y}, \widetilde{\om}) \to (Y, \om)$ is called a {\it symplectic resolution} if $\widetilde{\om}$ is also symplectic. These definitions are due to Beauville \cite{Bea}. 

Now, we define conical symplectic varieties as the following: 
\begin{definition}{\rm (Conical symplectic variety)}\label{def:conical}\\
An affine symplectic variety $(Y\hspace{-2pt}=\hspace{-2pt}\Spec R, \ {\om})$ with $\Cstar$-action (called {\it conical $\Cstar$-action}) is called a {\it conical symplectic variety} if it satisifies the following: 
\begin{itemize}
\item[(i)]The grading induced from the $\Cstar$-action to the coordinate ring $R$ is positive, i.e., $R=\bigoplus_{i\geq0}{R_i}$ and $R_0=\C$. 
\item[(ii)]${\om}$ is {\it homogeneous} with respect to the $\Cstar$-action, i.e., there exists $\ell\in\Z$ (the {\it weight} of ${\om}$) such that $s^*{\om}=s^{\ell}{\om} \ (s\in\Cstar)$. 
\end{itemize}
For a given two conical symplectic varieties, we say that they are isomorphic as conical symplectic varieties if there exists an $\Cstar$-equivariant isomorphism between them preserving symplectic structures.   
\end{definition}  
\begin{remark}
As noted in \cite[Lemma 2.2]{Nam:equi}, the weight $\ell$ is always positive. 
\end{remark}

There are many examples of conical symplectic varieties, for example, the (normalization of) nilpotent orbit closures in semisimple Lie algebras, the Slodowy slices, quiver varieties, hypertoric varieties, symplectic quotient singularities, and so on. 
Next, we define Poisson varieties which include symplectic varieties as special cases. 
\begin{definition}A pair $(Y, \{-, -\})$ of a variety and a skew-symmetric bilinear {morphism} $\{-, -\}: \calO_{Y}\times\calO_{Y}\to\calO_{Y}$ is a {\it Poisson variety} if it satisfies the following: 
\begin{itemize}
\item[(1)]$\{f, gh\}=\{f, g\}h+\{f, h\}g$,
\item[(2)]$\{f, \{g, h\}\}+\{g, \{h, f\}\}+\{h, \{f, g\}\}=0$.
\end{itemize}
\end{definition}
For a symplectic variety $(Y, \om)$, one can consider a natural Poisson structure on $Y_{\reg}$ as $\{f, g\}:=\om(H_f, H_g)$, where $H_f$ is the Hamiltonian vector ($f\in\calO_{Y_{\reg}}$), and by the normality of $Y$, it uniquely extends to a Poisson structure on $Y$. 




Below, for a conical symplectic variety $(Y, \om)$, we will consider the universal cover of $Y$. To do this, it is natural to ask the following question. 
\begin{problem}{\rm (\cite[Problem 7.2]{Nam:equi})}\label{problem:fundamental}\\
For a conical symplectic variety $(Y, \om)$, is the fundamental group $\pi_1(Y_{\reg})$ of the regular part of $Y$ finite ?
\end{problem}
Namikawa gave a general partial answer for this problem as the following. 
\begin{theorem}{\rm (\cite{Nam:fund})}\\
In the above setting, the algebraic fundamental group $\hat{\pi}_1(Y_{\reg})$ is finite. 
\end{theorem}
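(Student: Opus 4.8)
The plan is to reduce this global statement to a purely local one at the vertex of the cone, and then to invoke the finiteness of local étale fundamental groups of klt singularities. The starting observation is that a symplectic variety has canonical singularities, hence is klt, with trivial canonical class; in particular the cone point $0\in Y$ (the unique fixed point of the conical $\Cstar$-action, cut out by $\bigoplus_{i>0}R_i$ where $R=\bigoplus_{i\geq0}R_i$) is a klt singularity. So it suffices to show that the local étale fundamental group of a punctured neighborhood of $0$ is finite, once we identify it with $\hat{\pi}_1(Y_{\reg})$.

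First I would carry out this identification using the conical structure. Fix a $\Cstar$-equivariant closed embedding $Y\hookrightarrow V$ into a graded representation with strictly positive weights, and let $L:=Y\cap S$ be the link, where $S$ is a unit sphere for a Hermitian norm on $V$. Since $\Sing Y$ is $\Cstar$-stable, so is $Y_{\reg}$, and the real flow $\R_{>0}\subset\Cstar$ acts freely and properly on $Y\setminus\{0\}\supseteq Y_{\reg}$; contracting along this flow gives a deformation retraction of $Y_{\reg}$ onto the regular part $L_{\reg}$ of the link. The same retraction identifies $L_{\reg}$ with the regular part of an arbitrarily small punctured neighborhood of $0$, so $Y_{\reg}$ is homotopy equivalent to the regular part of a punctured neighborhood of the cone point. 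By the Riemann existence theorem the étale fundamental group is the profinite completion of the topological one, which is a homotopy invariant; hence $\hat{\pi}_1(Y_{\reg})$ is isomorphic to the local étale fundamental group $\hat{\pi}_1^{\mathrm{loc}}(Y,0)$.

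It then remains to prove that $\hat{\pi}_1^{\mathrm{loc}}(Y,0)$ is finite for the klt singularity $(Y,0)$, and this is exactly the finiteness theorem for local étale fundamental groups of klt singularities (Xu). This last step is the main obstacle: it is a deep input from the minimal model program, resting on the boundedness theory of klt singularities, and I would take it as a black box rather than reprove it. Equivalently, one may phrase the argument through quasi-étale covers: by purity of the branch locus every connected finite étale cover of $Y_{\reg}$ extends to a finite cover $Y'\to Y$ that is étale in codimension one, and such $Y'$ is again a conical symplectic, hence klt, variety; finiteness of $\hat{\pi}_1(Y_{\reg})$ is then the assertion that the degrees of these quasi-étale covers are uniformly bounded, which is once more the content of the klt finiteness theorem. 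The only genuinely elementary part is the conical reduction of the preceding paragraph; all of the quantitative force of the bound is concentrated in that external theorem.
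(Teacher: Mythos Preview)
The paper does not actually prove this theorem: it is quoted as a result of Namikawa \cite{Nam:fund} and used as a black box, with no argument given in the present text. So there is no ``paper's own proof'' to compare your attempt against.

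That said, your outline is essentially the argument that appears in the cited reference. Namikawa's proof in \cite{Nam:fund} proceeds exactly by observing that a conical symplectic variety has canonical (hence klt) singularities, reducing via the $\Cstar$-contraction to the local \'etale fundamental group at the cone point, and then invoking Xu's finiteness theorem for local fundamental groups of klt singularities. Your sketch of the conical retraction (using the $\R_{>0}$-flow to deformation retract $Y_{\reg}$ onto the regular part of the link, and Riemann existence to pass between topological and \'etale $\pi_1$) is correct, and your honest acknowledgment that Xu's theorem is the nontrivial input is appropriate: that step is indeed where all the depth lies.
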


If we know the finiteness of the fundamental group of the regular locus of a given conical symplectic variety $Y$, we can consider the (possibly singular) {\it universal covering of $Y$} as follows. 
Although this construction has already appeared in \cite[p. 512]{Nam:equi} and \cite[p.4]{Nam:new} briefly, we will give a proof in detail. 

\begin{definition and proposition}{\rm (The universal cover of conical symplectic varieties)}\label{def and prop:universal cover}\\
Let $(Y, \om)$ be a conical symplectic variety with $m:=|\pi_1(Y_{\reg})|<\infty$. Then, there exists a unique conical symplectic variety $(\overline{Y}, \overline{\om})$ and a finite $\Cstar$-equivariant morphism $\vphi : (\overline{Y}, \overline{\om}) \to (Y, \om)$ (with respect to $\sigma_{\overline{Y}}$ and $\sigma_{Y}^m$, where the conical $\Cstar$-action is denoted by $\sigma_{\overline{Y}}$ (resp. $\sigma_Y$)) such that its restriction $\vphi| : \vphi^{-1}(Y_{\reg}) \to Y_{\reg}$ gives the universal cover of $Y_{\reg}$: 
 \[\begin{tikzcd}[contains/.style = {draw=none,"\in" description,sloped}, icontains/.style = {draw=none,"\ni" description,sloped}, inclusion/.style = {draw=none,"\subset" description,sloped}, niclusion/.style = {draw=none,"\supset" description,sloped}]
(\overline{Y}, \overline{\om})\ar[r, niclusion]\ar[d, "\vphi"]&\vphi^{-1}(Y_{\reg})\ar[d, "\vphi|"]\\
(Y, \om)\ar[r, niclusion]&Y_{\reg}
\end{tikzcd}.\]
In this paper, we call $\overline{Y}$ the {\rm universal covering of $Y$}. 
\end{definition and proposition}
\begin{proof}
First, we can take the universal cover $Z_0 \to Y_{\reg}$, then $Z_0$ is a complex manifold. By the finiteness of the fundamental group $\pi_1(Y_{\reg})$ and Riemann's existence theorem \cite[Th\'{e}or\`{e}me 5.1]{SGA1}, 
there exists an algebraic variety $\overline{Y}_0$ and a finite \'{e}tale morphism $\vphi_0 : \overline{Y}_0 \to Y_{\reg}$ such that $\overline{Y}_0^{\an}=Z_0$ and $\vphi_0$ is the original universal cover. Then, by considering the spectrum $\overline{Y}:=\Spec \overline{R}$ of the integral closure of the coordinate ring $R:=\C[Y]$ in the function field $\C(\overline{Y}_0)$, $\vphi_0$ uniquely extends to a finite surjective morphism $\vphi : \overline{Y} \to Y$, in particular we have the following diagram: 
\[\begin{tikzcd}[contains/.style = {draw=none,"\in" description,sloped}, icontains/.style = {draw=none,"\ni" description,sloped}, inclusion/.style = {draw=none,"\subset" description,sloped}, niclusion/.style = {draw=none,"\supset" description,sloped}]
\overline{Y}\ar[r, niclusion]\ar[d, "\vphi"]&\overline{Y}_0=\vphi^{-1}(Y_{\reg})\ar[d, "\vphi|=\vphi_0"]\\
Y\ar[r, niclusion]&Y_{\reg}
\end{tikzcd}.\]

Next, we define a conical $\Cstar$-action $\sigma_{\overline{Y}} : \Cstar\times \overline{Y} \to \overline{Y}$. Consider the following diagram:    
\[\begin{tikzcd}
\Cstar\times \overline{Y}_0\ar[d, "\id\times\vphi_0"]&\overline{Y}_0\ar[d, "\vphi_0"]\\
\Cstar\times Y_{\reg}\ar[r, "\sigma_Y^m"]&Y_{\reg}
\end{tikzcd}.\]
Since the induced homomorphism $(\sigma_Y^m\circ (\id \times \vphi_0))_* : \Z\cong\pi_1(\Cstar\times \overline{Y}_0) \to \pi_1(Y_{\reg})$ is a zero homomorphism, we can uniquely lift the morphism $\sigma_Y^m\circ (\id \times \vphi_0)$ to an analytic morphism $\sigma_{\overline{Y}_0} : \Cstar\times \overline{Y}_0 \to \overline{Y}_0$ satisfying $\sigma_{\overline{Y}_0}(1, y)=y$ for any $y\in\overline{Y}_0$. Then, one can check that $\sigma_{\overline{Y}_0}$ gives a $\Cstar$-action on $\overline{Y}_0$. 
Since for a $\C$-scheme $W$, the category of \'{e}tale finite coverings over $W$ is equivalent to the category of complex analytic finite coverings over $W$ (cf. \cite[Th\'{e}or\`{e}me 5.1]{SGA1}), 
we can easily show that $\sigma_{\overline{Y}_0}$ is an algebraic morphism. Then, we can uniquely extend this $\Cstar$-action $\sigma_{\overline{Y_0}}$ to a $\Cstar$-action $\sigma_{\overline{Y}}$ on $\overline{Y}$ since we have $\codim_{\overline{Y}}{(\overline{Y}-\overline{Y}_0)}\geq 2$ by the construction and the normality of $\overline{Y}$. Moreover, one can easily check that this action $\sigma_{\overline{Y}}$ is also positive (cf.\ Definition \ref{def:conical}). In fact, note that $\vphi : \overline{Y} \to Y$ is $\Cstar$-equivariant with respect to $\sigma_{\overline{Y}}$ and $\sigma_Y^m$. Then, through the injective homomorphism $\vphi^* : R=\bigoplus_{i\in\Z_{\geq0}}{R'_i}\hookrightarrow \overline{R}=\bigoplus_{i\in\Z}{\overline{R}_i}$ as graded algebras, $\overline{R}$ is integral over $R$, where the gradings correspond to the action $\sigma_Y^m$ and $\sigma_{\overline{Y}}$ respectively. In this situation, if we have an element $f\in\overline{R}_d$ of a negative degree $d<0$, then there exists a relation as $f^N+r_1f^{N-1}+\cdots+r_N=0$, where $r_i\in R'_{di}$. Since we have $R'_{di}=0$ and $\overline{R}$ is domain, we obtain $f=0$. In the similar way, one can easily show that $\overline{R}_0=\C$ since $\C$ is algebraically closed.      

Finally, we will prove that $\overline{Y}$ is a conical symplectic variety. Since $\vphi_0$ is an \'{e}tale $\Cstar$-equivariant morphism, $\vphi_0^*\om$ is a conical symplectic form on $\overline{Y}_0$. Moreover, from $\codim_{\overline{Y}}{(\overline{Y}-\overline{Y}_0)}\geq2$, $\vphi_0^*\om$ will uniquely extend to a conical symplectic form $\overline{\om}$ on $(\overline{Y})_{\reg}$ (in detail, for example see the proof of \cite[{Proposition 2.15}]{Nag}). Now, note that $Y$ has symplectic singularities, so $Y$ has only canonical singularities. Then, by $\codim_{\overline{Y}}{(\overline{Y}-\overline{Y}_0)}\geq2$, $\vphi$ is \'{e}tale in codimension 1, so $\overline{Y}$ has also only canonical singularities (cf.\ \cite[Proposition 5.20]{KM}). Thus, by definition of canonical singularities, we can conclude that $(\overline{Y}, \overline{\om})$ is a symplectic variety. It is clear that $\overline{\om}$ is homogeneous of weight $m\ell$ with respect to the $\Cstar$-action $\sigma_{\overline{Y}}$. Thus, $(\overline{Y}, \overline{\om})$ is a conical symplectic variety and satisfies the desired commutative diagram. 

\end{proof}

Since the universal cover often gives a new example of conical symplectic varieties, it is natural to consider the following problem. 
\begin{problem}\label{problem:universalcover}
For a given conical symplectic variety $(Y, \om)$ with $|\pi_1(Y_{\reg})|<\infty$, describe the universal cover $\vphi : (\overline{Y}, \overline{\om}) \to (Y, \om)$ and $\pi_1(Y_{\reg})$. 
\end{problem}

Below, we will show that if a conical symplectic variety $Y$ with $\codim \Sing(Y)\geq4$ admits a symplectic resolution, then its regular locus is simply-connected, i.e., it is already the universal covering. First, we note the following general result.  
\begin{theorem}{\rm (\cite[Theorem 1.1]{Tak})}\label{thm:takayama}\\
Let $Y$ be a normal variety and assume that $(X, \Delta)$ is a klt pair for some divisor $\Delta$. Then, for any resolution $f : \widetilde{Y}\to Y$, the induced homomorphism $f_{*} : \pi_1(\widetilde{Y}) \to \pi_1(Y)$ is an isomorphism.  
\end{theorem}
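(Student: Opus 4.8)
The plan is to prove the two halves of the statement separately: surjectivity of $f_*$ holds for an \emph{arbitrary} resolution of a normal variety and does not use the klt hypothesis, whereas injectivity is the substantive part and is where klt enters.

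\textbf{Surjectivity.} Since $Y$ is normal and $f$ is proper and birational, Zariski's main theorem gives that every fibre of $f$ is connected. For a proper surjective morphism with connected fibres, the induced map on topological $\pi_1$ is surjective: cover $Y$ by small contractible analytic opens over which one can join the sheets of $f^{-1}$ using connectedness of fibres, and run a van Kampen / path-lifting argument to write any loop in $Y$ as the image of a loop in $\widetilde Y$. So $f_*$ is onto and it remains to treat injectivity.

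\textbf{Injectivity: set-up.} Replace $f$ by a log resolution, so $E:=\operatorname{Exc}(f)$ is a simple normal crossing divisor and $f$ is an isomorphism over a dense open $U\subseteq Y$ with $\codim_Y(Y\setminus U)\ge 2$ (it contains $Y_{\reg}$). Then $\widetilde Y\setminus E\cong U$ as varieties, and since $\widetilde Y$ is smooth, $\pi_1(\widetilde Y\setminus E)\twoheadrightarrow\pi_1(\widetilde Y)$ has kernel the normal subgroup generated by the meridians $\delta_i$ of the components $E_i$ of $E$. Under the identification $\pi_1(\widetilde Y\setminus E)=\pi_1(U)$, each $\delta_i$ maps to $1$ in $\pi_1(Y)$ (it bounds a small disc in $\widetilde Y$ meeting $E_i$ once, whose image is a disc in $Y$ since $f(E_i)$ has codimension $\ge 2$). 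Hence $f_*\colon\pi_1(\widetilde Y)\to\pi_1(Y)$ is injective \emph{iff} $\ker\bigl(\pi_1(U)\to\pi_1(Y)\bigr)$ is normally generated by the meridians $\delta_i$.

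\textbf{Injectivity: the core.} Concretely, one must show that a loop $\gamma\subset U$ bounding a singular disc $\Delta\to Y$ becomes null-homotopic in $\widetilde Y$ after modification by meridians. Put $\gamma$ and $\Delta$ in general position with respect to $f(E)$, so that $\Delta$ meets $f(E)$ only at interior points of its top-dimensional part; by generic smoothness along $f(E)$ the pair $(Y,f(E))$ there looks like a polydisc times a transversal surface singularity $(S,o)$ resolved by the corresponding slice of $f$. Lifting $\Delta$ thus reduces to the two-dimensional case: \emph{for a resolution $g$ of a klt surface singularity $(S,o)$, $g_*\colon\pi_1(\widetilde S)\to\pi_1(S)$ is an isomorphism.} This is exactly where klt is used: a klt surface singularity is a quotient singularity $\C^2/G$, so analytically locally $S$ is the contractible cone over $S^3/G$ (hence $\pi_1(S)=1$), while $\operatorname{Exc}(g)$ is a tree of rational curves whose plumbing neighbourhood — and therefore $\widetilde S$ — is simply connected; both groups are trivial. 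Feeding this back shows each local contribution to $\ker(\pi_1(U)\to\pi_1(Y))$ is a product of meridians, as required.

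\textbf{Main obstacle.} The delicate point is the reduction in the last paragraph: making rigorous that $\ker(\pi_1(U)\to\pi_1(Y))$ is normally generated by the ``links at generic points of $f(E)$'', and that nothing extra is hidden over the lower-dimensional, possibly degenerate strata of $f(E)$. This needs a careful stratified general-position (Lefschetz-type) argument, together with the fact that removing a complex-codimension $\ge 2$ subset of a smooth variety does not change $\pi_1$; controlling the degenerate transversal slices — not only the generic ones — is precisely where one must invoke that $Y$ has klt (in particular rational) singularities rather than merely normality, via a vanishing theorem. Everything else is formal once this step is in place.
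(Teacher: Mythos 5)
First, a remark on scope: the paper does not prove this statement at all — it is quoted from Takayama \cite[Theorem 1.1]{Tak} and used as a black box — so there is no internal proof to compare yours against; your attempt has to stand on its own (or against Takayama's argument).

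Your surjectivity argument and your reformulation of injectivity (that the kernel of $\pi_1(U)\to\pi_1(Y)$ must be normally generated by the meridians of the exceptional components) are fine. The genuine gap is the ``core'' step. Since $Y$ is normal, $f(E)$ has complex codimension $\geq 2$ in $Y$, i.e.\ real codimension $\geq 4$; so if a general-position argument for the singular disc $\Delta\to Y$ were actually available, it would push $\Delta$ off $f(E)$ entirely and ``prove'' that $\pi_1(U)\to\pi_1(Y)$ is injective — which is false already for $Y=\C^2/(\Z/n\Z)$, where the generator of $\pi_1(Y_{\reg})\cong\Z/n\Z$ dies in $\pi_1(Y)=1$ only via a disc passing through the singular point. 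The whole difficulty is that transversality fails exactly at the singular points the disc is forced to traverse, and at such a point (say an isolated klt singularity of dimension $\geq 3$) the local model is not ``polydisc times a klt surface singularity'', so there is no reduction to the two-dimensional quotient-singularity case. Your closing paragraph concedes that the degenerate strata must be handled ``via a vanishing theorem'', but that unhandled case is precisely the entire content of the theorem. Takayama's actual proof establishes the stronger local statement that every fiber $f^{-1}(x)$ is simply connected, by analytic means (rational singularities plus covering-space arguments controlled by discrepancies of the klt pair), not by a stratified general-position reduction to surfaces; as it stands, your argument proves the theorem only for surfaces.
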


Using this, we can show the following. 
\begin{proposition}\label{prop:codim4fundamental}
Assume that a conical symplactic variety $Y$ satisfies $\codim_Y{\Sing (Y)}\geq4$ and admits a symplectic resolution $f: \widetilde{Y} \to Y$．Then, $\pi_1(Y_{\reg})\cong \pi_1(\widetilde{Y})\cong\pi_1(Y)=0$, in particular, $Y_{\reg}$ is simply-connected.  
\end{proposition}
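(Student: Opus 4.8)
The plan is to run three comparisons of fundamental groups and chain them together. First I would handle $\pi_1(\widetilde Y) \cong \pi_1(Y)$: since $Y$ is a conical symplectic variety it has (canonical, hence) klt singularities, so the pair $(Y, 0)$ is klt and Theorem \ref{thm:takayama} applies directly to the symplectic resolution $f : \widetilde Y \to Y$, giving $f_* : \pi_1(\widetilde Y) \xrightarrow{\sim} \pi_1(Y)$.

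Next I would compare $\pi_1(Y_{\reg})$ with $\pi_1(\widetilde Y)$. The key point is the codimension hypothesis $\codim_Y \Sing(Y) \ge 4$. Set $U := f^{-1}(Y_{\reg}) \subseteq \widetilde Y$; since $f$ is an isomorphism over $Y_{\reg}$ we have $U \cong Y_{\reg}$, so it suffices to show the open immersion $U \hookrightarrow \widetilde Y$ induces an isomorphism on $\pi_1$. The complement $\widetilde Y \setminus U = f^{-1}(\Sing Y)$ is a closed subset of the smooth variety $\widetilde Y$; because $f$ is a symplectic resolution it is semismall (the fibers over a codimension-$2k$ stratum have dimension $\le k$), so the exceptional locus over $\Sing Y$ has codimension $\ge \tfrac12 \codim_Y \Sing(Y) \ge 2$ in $\widetilde Y$. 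Removing a closed subset of (real) codimension $\ge 4$ from a smooth variety does not change $\pi_1$, hence $\pi_1(Y_{\reg}) = \pi_1(U) \cong \pi_1(\widetilde Y)$. (Even without invoking semismallness one can argue: $\widetilde Y$ being a symplectic — in particular crepant — resolution with $\codim_Y \Sing Y \ge 4$ forces the exceptional divisor to have image of codimension $\le 3$... so the cleanest route really is semismallness, which is standard for symplectic resolutions.)

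Finally I would show $\pi_1(\widetilde Y) = 0$. Here I use that $\widetilde Y$ carries a $\Cstar$-action lifting the conical one on $Y$ (the resolution of a conical symplectic variety is equivariant, by uniqueness/functoriality of symplectic resolutions, or simply because $f$ is the unique such resolution up to the action), and the action on $Y$ is contracting: $\lim_{s\to 0} s\cdot y$ exists for all $y$ and equals the cone point, since $R = \bigoplus_{i\ge 0} R_i$ with $R_0 = \C$. The contraction lifts to $\widetilde Y$, so $\widetilde Y$ $\Cstar$-equivariantly retracts onto the central fiber $f^{-1}(0)$, which is a projective variety; more to the point, the existence of a $\Cstar$-action with a single fixed point in the closure of every orbit makes $\widetilde Y$ contractible (it deformation retracts, via the analytic flow of the action as $|s| \to 0$, onto the fixed locus, and in the conical case onto a point). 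Hence $\pi_1(\widetilde Y) = 0$. Composing the three identifications gives $\pi_1(Y_{\reg}) \cong \pi_1(\widetilde Y) \cong \pi_1(Y) = 0$.

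The main obstacle is the middle step: making precise that passing from $\widetilde Y$ to $f^{-1}(Y_{\reg})$ does not change $\pi_1$. This needs (a) that $f$ is semismall — true for symplectic resolutions by a theorem of Kaledin/Namikawa, so the exceptional locus over $\Sing Y$ has codimension $\ge 2$ in $\widetilde Y$ — and (b) the topological fact that excising a closed analytic subset of complex codimension $\ge 2$ from a connected complex manifold leaves $\pi_1$ unchanged (a standard transversality/general-position argument, since such a set has real codimension $\ge 4$). Both ingredients are classical, but they are where the codimension-$4$ hypothesis is genuinely used, so the writeup should state them carefully rather than gloss over them. The first and third steps are essentially immediate given the results already quoted (Theorem \ref{thm:takayama}) and the definition of conical symplectic variety.
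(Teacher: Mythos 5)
Your proposal follows the paper's proof almost step for step: Takayama's theorem (Theorem \ref{thm:takayama}) gives $\pi_1(\widetilde Y)\cong\pi_1(Y)$ since canonical implies klt; semismallness of symplectic resolutions together with $\codim_Y\Sing(Y)\ge4$ gives $\codim_{\widetilde Y}f^{-1}(\Sing Y)\ge2$, so excising it leaves $\pi_1$ unchanged and $\pi_1(Y_{\reg})\cong\pi_1(\widetilde Y)$; and the conical structure kills the fundamental group. The one place where your argument fails as written is the last step: $\widetilde Y$ is \emph{not} contractible in general, and the lifted $\Cstar$-flow retracts $\widetilde Y$ onto the central fiber $f^{-1}(0)$ (equivalently, onto the fixed locus), not onto a point. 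Already for $Y=\C^2/\Z_2$ and $\widetilde Y=T^*\mathbb{P}^1$ the retract is $\mathbb{P}^1$, so $H^2(\widetilde Y)\neq0$ and the space is certainly not contractible; concluding $\pi_1(\widetilde Y)=0$ from this retraction would require knowing $\pi_1(f^{-1}(0))=0$, which you have not established. The fix is exactly what the paper does, and you already have all the ingredients: the conical action contracts $Y$ \emph{itself} to the cone point (this is where positivity of the grading, $R=\bigoplus_{i\ge0}R_i$ with $R_0=\C$, is used), so $Y$ is contractible and $\pi_1(Y)=0$; then the Takayama isomorphism $\pi_1(\widetilde Y)\cong\pi_1(Y)$ from your first step transfers this to $\widetilde Y$. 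With that one correction the chain $\pi_1(Y_{\reg})\cong\pi_1(\widetilde Y)\cong\pi_1(Y)=0$ closes exactly as in the paper.
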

\begin{proof}
{Since $f| : f^{-1}(Y_{\reg}) \to Y_{\reg}$ is a crepant birational morphism, it is an isomorphism, in particular, } $\pi_1(Y_{\reg})=\pi_1(f^{-1}(Y_{\reg}))$. 
On the other hand, since any symplectic resolution is semi-small (cf.\ \cite[Lemma 2.7]{Kalstratify}, \cite[Theorem 3.2]{Fu:survay}), $2\codim_{\widetilde{Y}}f^{-1}(\Sing(Y))\geq\codim_Y{\Sing(Y)}\geq4$. Hence, $\codim_{\widetilde{Y}}f^{-1}(\Sing(Y))\geq2$. In particular, $\pi_1(\widetilde{Y})\cong\pi_1(Y_{\reg})$. Since a conical symplectic variety has canonical singularities, $\pi_1(Y_{\reg})\cong \pi_1(\widetilde{Y})\cong\pi_1(Y)$ by the above general theorem. Also, a conical symplectic variety is contractible by the conical action, so we complete the proof. 
\end{proof}
\begin{remark}
By \cite{Nam:terminal}, for any symplectic variety $(Y, \om)$, $Y$ is terminal if and only if $\codim_Y{\Sing(Y)}\geq4$. 
\end{remark}

\section{Hypertoric varieties}

In this section, we review the definition of hypertoric varieties and give some examples. We basically use the same notation as \cite{HS} (except the notation of hypertoric varieties). Let $A: \Z^n\xrightarrowdbl{}N\cong\Z^d$ be a surjective linear map to a free abelian group $N$ (in this paper, we almost always fix a basis of $N$, identify $N$ with $\Z^d$ and consider $A=[\bm{a_1}, \ldots, \bm{a_n}]$ as a $d\times n$-matrix). Then, take $B^T=[\bm{b_1}, \ldots ,\bm{b_n}]\in \Mat_{(n-d)\times n}(\Z)$ as the following is exact: 
\[\begin{tikzcd}
0\arrow{r}&\Z^{n-d}\arrow{r}{B} &\Z^{n}\arrow{r}{A}&N\cong\Z^d\arrow{r}&0         
\end{tikzcd}.\]
The configuration $\{\bm{b_1}, \ldots ,\bm{b_n}\}$ in $\Z^{n-d}$ is called a {\it Gale dual} of $\{\bm{a_1}, \ldots ,\bm{a_n}\}$. 

\begin{assumption}\label{ass:1}
Throughout this paper, we assume that $A$ is unimodular, i.e., any $d\times d$-minor is $\pm1$. Moreover, to consider only essential cases, we assume that for any $j$, {$\bm{b_j}\neq\bm{0}$}. As we note in \cite[Corollary 2.30]{Nag}, we can always replace general $B$ by such ones without changing the associated hypertoric variety. 
\end{assumption}

By applying $\Hom(-, \Cstar)$ to the above exact sequence, we obtain the following exact sequence of algebraic tori. 
\[\begin{tikzcd}
1\ar[r]&\T_\C^d\ar[r, "{A^T}"]&\T_\C^n\ar[r, "{B^T}"]&\T_\C^{n-d}\ar[r]&1
\end{tikzcd},\] 
where $\T_\C^k:=(\Cstar)^k$. Then, through the embedding $\T_\C^d\overset{A^T}{\hookrightarrow}\T_\C^n$ and the natural action $\T_\C^n\curvearrowright(T^*\C^n, \omega_\C)=(\C^{2n}, \sum_{j=1}^n{dz_j\wedge dw_j})$, we obtain a hamiltonian $\T_\C^d$-action on $(\C^{2n}, \omega_\C)$ (for the definition of the hamiltonian action, we refer \cite[Definition 9.43]{Kir}). More explicitly, this action is described as the following: 
\[\bm{t}\cdot(z_1, \ldots ,z_n, w_1, \ldots ,w_n):=(\bm{t}^{\bm{a_1}}z_1, \ldots ,\bm{t}^{\bm{a_n}}z_n, \bm{t}^{\bm{-a_1}}w_1, \ldots ,\bm{t}^{\bm{-a_n}}w_n),\]
where $\bm{t}^{\bm{a_j}}:=t_1^{a_{1j}}\cdots t_d^{a_{dj}}$. 
Since this action is hamiltonian, we have a $\T_\C^d$-invariant moment map $\mu=\mu_A: \C^{2n}\to(\gert_\C^d)^*=\C^d$ (more strongly, $\mu$ is $\T_\C^n$-invariant) as follows: 
\[\mu(\bm{z}, \bm{w}):=\mu_A(\bm{z}, \bm{w}):=\sum_{j=1}^nz_jw_j\bm{a_j}.\]
Then, the hypertoric variety is defined as the GIT quotient of $\mu^{-1}(0)$ by $\T_\C^d$, and actually we can describe this explicitly as the following (cf. \cite{HS}). 
For $\alpha\in\Z^d=\Hom(\T_\C^d, \Cstar)$ and the coordinate ring $\C[\mu^{-1}(0)]$ of $\mu^{-1}(0)$, we set 
\[\C[\mu^{-1}(0)]^{\T_\C^d, \alpha}:=\{f\in\C[\mu^{-1}(0)] \ | \ f(\bm{t}\cdot m)=\alpha(\bm{t})f(m) \ \forall \bm{t}\in\T_\C^d\}.\]
An element $p\in \mu^{-1}(0)$ is called {\it $\alpha$-semistable} if there exists some $k>0$ and $f\in\C[\mu^{-1}(0)]^{\T_\C^d, k\alpha}$ such that $f(p)\neq0$, and we denote the set of $\alpha$-semistable elements by $\mu^{-1}(0)^{\alpha-ss}$. Also, an element $p\in \mu^{-1}(0)^{\alpha-ss}$ is called {\it $\alpha$-stable} if the stabilizer group of $\T_\C^d$ at $p$ is finite and $\T_\C^d\cdot p\subseteq \mu^{-1}(0)^{\alpha-ss}$ is closed. We denote the set of $\alpha$-stable elements by $\mu^{-1}(0)^{\alpha-st}$.

\begin{definition}(Hypertoric varieties, cf.\ \cite{HS})\\ 
For $\alpha\in\Z^d=\Hom(\T_\C^d, \Cstar)$, we consider a graded algebra $\bigoplus_{k\in\Z_{\geq0}}{\C[\mu^{-1}(0)]^{\T_\C^d, k\alpha}}$. Then, we define $Y_A(\alpha)$ as the following: 
\[Y_A(\alpha):=\Proj\left(\bigoplus_{k\in\Z_{\geq0}}{\C[\mu^{-1}(0)]^{\T_\C^d, k\alpha}}\right)=\mu^{-1}(0)^{\alpha-ss}/\hspace{-3pt}/\T_\C^d, \]
where $/\hspace{-3pt}/$ denotes the categorical quotient.  
We also often write $Y_A(\alpha)=\C^{2n}/\hspace{-3pt}/\hspace{-3pt}/_{\alpha}\T_\C^d$. We call $Y_A(\alpha)$ a {\it hypertoric variety}. In particular, if $\alpha=0$, we simply denote $Y_A(0)=\C^{2n}/\hspace{-3pt}/\hspace{-3pt}/\T_\C^d$ and call $Y_A(0)$ an {\it affine hypertoric variety}. 
\end{definition}
\begin{remark}
By definition, $Y_A(\alpha)$ admits the remaining hamiltonian torus action $\T_\C^{n-d}=\T_\C^n/\T_\C^d\curvearrowright Y_A(\alpha)$. Also, we consider a natural $\Cstar$-action on $\C^{2n}$ such that $s\cdot(z_1, \ldots ,z_n, w_1, \ldots ,w_n)=(s^{-1}z_1, \ldots ,s^{-1}z_n, s^{-1}w_1, \ldots ,s^{-1}w_n)$. Since this action commutes with $\T_\C^d$-action and $\mu$ is $\Cstar$-equivariant, a $\Cstar$-action on $Y_A(\alpha)$ is induced. In particular, for any $\alpha$, the natural projective morphism $\pi_\alpha : Y_A(\alpha) \to Y_A(0)$ is $\Cstar\times \T_\C^{n-d}$-equivariant. 
\end{remark}
\begin{remark}\label{rem:semistable affine}
By definition, for any $\alpha\in\Z^d$, $\mu^{-1}(0)^{\alpha-ss}$ is covered by $\T_\C^d$-invariant affine open subsets. Moreover, one can explicitly describe $\mu^{-1}(0)^{\alpha-ss}$ as the following (cf.\ \cite[Lemma 3.4]{Kosurvey}):  
\begin{equation*}\tag{*}
\mu^{-1}(0)^{\alpha-ss}=\Set{(\bm{z}, \bm{w}) \in \mu^{-1}(0) \ | \ \alpha \in \sum_{i: z_i \neq0}{\Q_{\geq0}\bm{a_i}} + \sum_{i:w_i\neq0}{\Q_{\geq0}(-\bm{a_i})}}.
\end{equation*}
\end{remark}

We recall some well-known basic properties of hypertoric varieties. Below, $\alpha\in\Z^d$ is called {\it generic} if $\alpha$  is not contained in any proper subspace generated by some $\bm{a_j}$'s. 
\begin{proposition}{\rm (for example, see \cite[{Theorem 2.16}]{Nag})}\\
Assume that $\alpha\in\Z^d$ is generic. Then, $\mu^{-1}(0)^{\alpha-ss}=\mu^{-1}(0)^{\alpha-st}$ and the torus action $\T_\C^d\curvearrowright \mu^{-1}(0)^{\alpha-st}$ is free. 
Also, the hypertoric variety $(Y_A(\alpha), \om)$ is a smooth symplectic variety, where $\om$ is the standard symplectic structure which is induced from the standard symplectic structure $(\C^{2n}, \om_\C)$.  
Moreover, the natural projective morphism $\pi_\alpha : Y_A(\alpha) \to Y_A(0)$ gives a $\Cstar\times\T_\C^{n-d}$-equivariant symplectic resolution, and $Y_A(0)$ is a conical symplectic variety with weight 2.   
\end{proposition}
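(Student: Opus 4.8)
The plan is to handle the assertions in turn; the first three are formal once one has (a) the unimodularity of $A$ and (b) a one–line transversality computation for the moment map, and the only substantial point is the normality of $\mu^{-1}(0)$, hence of $Y_A(0)$. \emph{Stability and freeness.} Fix a generic $\alpha$. By Remark~\ref{rem:semistable affine}, $(\bm z,\bm w)\in\mu^{-1}(0)^{\alpha\text{-}ss}$ iff $\alpha\in\sum_{z_i\neq 0}\Q_{\geq 0}\bm{a_i}+\sum_{w_i\neq 0}\Q_{\geq 0}(-\bm{a_i})$; since $\alpha$ is generic it lies in no proper $\bm{a_j}$-subspace, so for a semistable point the columns $\{\bm{a_i}:z_i\neq 0\ \text{or}\ w_i\neq 0\}$ span $\R^d$. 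By unimodularity they then contain a $\Z$-basis, so the $\T_\C^d$-stabilizer of $(\bm z,\bm w)$ is trivial and its orbit is $d$-dimensional. A standard GIT argument (each fibre of the good quotient $\mu^{-1}(0)^{\alpha\text{-}ss}\to Y_A(\alpha)$ contains a unique closed orbit, which would be lower-dimensional if any orbit were non-closed, yet is itself semistable hence $d$-dimensional) forces all these orbits to be closed, so $\mu^{-1}(0)^{\alpha\text{-}ss}=\mu^{-1}(0)^{\alpha\text{-}st}$ and the $\T_\C^d$-action there is free.

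\emph{Smoothness and the symplectic form.} The differential $d\mu_{(\bm z,\bm w)}\colon(\dot{\bm z},\dot{\bm w})\mapsto\sum_j(\dot z_jw_j+z_j\dot w_j)\bm{a_j}$ has image $\Span\{\bm{a_j}:z_j\neq 0\ \text{or}\ w_j\neq 0\}$, which by the previous step is all of $\R^d$ on the semistable locus; hence $0$ is a regular value there and $\mu^{-1}(0)^{\alpha\text{-}ss}$ is smooth of dimension $2n-d$. A free good quotient of a smooth variety by a torus is smooth, so $Y_A(\alpha)$ is smooth of dimension $2(n-d)$, and Marsden--Weinstein reduction at $0$ produces the symplectic form $\om$ (the unique $2$-form with $p^*\om=\iota^*\om_\C$ for $\iota\colon\mu^{-1}(0)^{\alpha\text{-}ss}\hookrightarrow\C^{2n}$ and $p$ the quotient map); this is the "standard" $\om$.

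\emph{The resolution.} Projectivity of $\pi_\alpha$ is immediate from the $\Proj$ description. For birationality, the locus $U_0\subseteq\mu^{-1}(0)$ of closed $\T_\C^d$-orbits with trivial stabilizer is a dense invariant open on which $0$-stability and $\alpha$-stability coincide (a trivial stabilizer makes the relevant columns span, hence the point is $\alpha$-semistable, by the argument above), so $U_0/\T_\C^d$ is a common dense open of $Y_A(\alpha)$ and $Y_A(0)$ over which $\pi_\alpha$ is the identity; thus $\pi_\alpha$ is birational, and being projective with smooth source it is a resolution of $Y_A(0)$ (which is a variety, see below). Both the reduced form on $Y_A(\alpha)$ and the one on $Y_A(0)_{\reg}$ restrict to the same form on $U_0/\T_\C^d$, so $\pi_\alpha^*\om$ extends to the global symplectic form $\om$ on the smooth $Y_A(\alpha)$: $\pi_\alpha$ is a symplectic resolution. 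Equivariance under $\Cstar\times\T_\C^{n-d}$ is inherited from $\C^{2n}$, since these actions commute with $\T_\C^d$, preserve $\mu^{-1}(0)$ and semistability, and $\Cstar$ rescales $\om_\C$; cf.\ the remark preceding the proposition.

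\emph{Conicality, and the main obstacle.} $Y_A(0)$ is affine by construction; the scaling $\Cstar$-action on $\C^{2n}$ descends, and the induced grading on $\C[Y_A(0)]=\C[\mu^{-1}(0)]^{\T_\C^d}$ is positive with degree-zero part $\C$, because a $\T_\C^d$-invariant monomial $\bm z^{\bm u}\bm w^{\bm v}$ has degree $|\bm u|+|\bm v|\geq 0$, vanishing only on constants; moreover $\om_\C$ is homogeneous of weight $2$, so $\om$ is too. Granting normality, $Y_A(0)$ is a normal affine variety admitting the symplectic resolution $\pi_\alpha$, hence has symplectic (and so canonical) singularities in Beauville's sense, i.e.\ $(Y_A(0),\om)$ is a conical symplectic variety of weight $2$. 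The only non-formal ingredient is thus: \emph{$\mu^{-1}(0)$ is an irreducible normal complete intersection of dimension $2n-d$} (then $Y_A(0)=\Spec\C[\mu^{-1}(0)]^{\T_\C^d}$, as the invariant ring of a normal domain under a reductive group, is a normal variety). I expect this to be the crux, and would prove it by: (i) an induction on $n$ giving irreducibility and $\dim\mu^{-1}(0)=2n-d$, the key point being that Assumption~\ref{ass:1} ($\bm{b_j}\neq\bm 0$ for all $j$) forces $\rank\{\bm{a_i}:i\neq j\}=d$ for every $j$, which controls $\dim\bigl(\mu^{-1}(0)\cap\{z_j=0\}\bigr)$ and $\dim\bigl(\mu^{-1}(0)\cap\{w_j=0\}\bigr)$ in the inductive step, so that $\mu^{-1}(0)$ is cut out by a regular sequence (hence Cohen--Macaulay, hence reduced as it is generically smooth); and (ii) refining the transversality computation above, again via that rank fact, to show $\codim_{\mu^{-1}(0)}\Sing(\mu^{-1}(0))\geq 2$, i.e.\ $R_1$, so that normality follows from Serre's criterion. (Alternatively one cites \cite{HS}.)
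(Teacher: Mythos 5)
Your proposal is correct and follows the same standard route as the sources the paper cites for this proposition (the paper gives no proof of its own, only the references to \cite{Nag} and, implicitly, \cite{HS}): genericity plus unimodularity give $\mu^{-1}(0)^{\alpha-ss}=\mu^{-1}(0)^{\alpha-st}$ and freeness, surjectivity of $d\mu$ on the semistable locus gives smoothness and the reduced form, the $\Proj$ description gives projectivity, and everything else reduces to the fact that $\mu^{-1}(0)$ is an irreducible normal complete intersection, which is exactly where the paper itself points to \cite{HS}, \cite{BK}, \cite{Nag} (cf.\ its Proposition \ref{prop:codim}).

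One small repair in your birationality step. From ``trivial stabilizer'' you only get that $\{\bm{a_i}\}_{i\in\mathrm{supp}}$ spans $\Z^d$, and a spanning set can still generate a \emph{proper} cone, so spanning alone does not yield $\alpha$-semistability. You must also use the closedness of the orbit, which is part of your definition of $U_0$ but not of your justification: for a closed $\T_\C^d$-orbit the cone $C=\sum_{z_i\neq0}\Q_{\geq0}\bm{a_i}+\sum_{w_i\neq0}\Q_{\geq0}(-\bm{a_i})$ is forced to be a linear subspace (otherwise a one-parameter subgroup pairing nonnegatively with $C$ and positively with some generator produces a limit point outside the orbit), and only then does spanning give $C=\Q^d\ni\alpha$. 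With that one line inserted, the argument is complete.
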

\begin{remark}\label{rem:hypertoric is symplectic}
For any (not necessarily generic) $\alpha\in\Z^d$, $Y_A(\alpha)$ is also a symplectic variety, and the natural projective morphism $\pi_\alpha : Y_A(\alpha) \to Y_A(0)$ is an isomorphism on $\pi_\alpha^{-1}(Y_A(0)_{\reg})$. In fact, first, $\pi_\alpha$ is birational by the similar argument of \cite[{Proposition 2.13}]{Nag}. Next, we can take a generic $\alpha_{\text{gen}}\in\Z^d$ such that $\mu^{-1}(0)^{\alpha_{\text{gen}}-ss}\subset \mu^{-1}(0)^{\alpha-ss}$. Then, this inclusion gives a natural birational morphism $\pi_{\alpha_{\text{gen}}, \alpha} : Y_A(\alpha_{\text{gen}}) \to Y_A(\alpha)$ such that $\pi_{\alpha_{\text{gen}}}=\pi_{\alpha_{\text{gen}}, \alpha}\circ \pi_\alpha$. Moreover, by the argument in the proof of \cite[{Proposition 2.15}]{Nag}, $\pi_{\alpha_{\text{gen}}}$ (resp. $\pi_{\alpha_{\text{gen}}, \alpha}$) is an isomorphism on $\pi_{\alpha_{\text{gen}}}^{-1}(Y_A(0))$ (resp. $\pi_{\alpha_{\text{gen}}, \alpha}^{-1}(Y_A(\alpha)_{\reg})$), in particular, $Y_A(\alpha)$ is a symplectic variety.  
\end{remark}

\begin{definition}{\rm (An equivalence relation between $A$'s (resp. $B$'s))}\label{def:operation}\\
For rank $d$ matrices $A$, $A'\in\Mat_{d\times n}(\Z)$, we say that $A'$ is {\it equivalent} to $A$ if there exists a $P\in GL_d(\Z)$ and an $n\times n$ signed permutation matrix $D$ such that $A'=PAD$, i.e., $A'$ is obtained from $A$ by a sequence of some elementary row operations over $\Z$, interchanging some column vectors $\bm{a_i}$'s, and multiplying some $\bm{a_i}$'s by $-1$. In this case, we denote $A\sim A'$. Similarly, we denote $B\sim B'$ if there exists $Q\in GL_{n-d}(\Z)$ and a $n\times n$ signed permutation matrix $D$ such that $B'=DBQ$.  
\end{definition}
Through this paper, we freely use these transformations of $A$ and $B$ since we have the following (this is essentially considered in \cite[Proposition 3.2]{AP}). 
\begin{lemma}{\rm (\cite[{Lemma 2.25}]{Nag})}\label{lem:fund}\\
In the setting above, if $A\sim A'$ (more explicitly $A'=PAD$), then there exists a natural following $\Cstar\times\T_\C^{n-d}$-equivariant isomorphism as symplectic varieties.  
\[ (Y_A(\alpha), \om) \xrightarrow{\sim} (Y_{A'}(\alpha'), \om'),\]
where $\alpha':=P\alpha$. 
\end{lemma}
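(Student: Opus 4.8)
The plan is to decompose the equivalence $A' = PAD$ into a finite sequence of three elementary moves — an integral change of basis $P\in GL_d(\Z)$ of the target lattice, a permutation of the columns $\bm{a_i}$, and a sign change $\bm{a_i}\mapsto-\bm{a_i}$ of a single column — and to realize each move by an explicit isomorphism compatible with $\om_\C$, the scaling $\Cstar$-action, the Hamiltonian $\T_\C^d$-action, the moment map, and the residual torus, so that it descends to the GIT quotients. Writing $A' = P(AD)$, the parameter only changes at the $P$-step (giving $\alpha' = P\alpha$), and $D = D_\sigma D_\epsilon$ is a permutation matrix times a diagonal $\pm1$ matrix, hence a product of column permutations and single sign flips.

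First the change of basis. Since $P\in GL_d(\Z)$ only post-composes the surjection $\Z^n\to\Z^d$ with an automorphism of the target, applying $\Hom(-,\Cstar)$ gives $A'^T = A^T\circ\Hom(P,\Cstar)$, so the image subtorus $A^T(\T_\C^d)\subseteq\T_\C^n$ is unchanged and only its parametrization is twisted by the automorphism $\Hom(P,\Cstar)$ of $\T_\C^d$. Consequently $\mu_{A'} = P\circ\mu_A$, so $\mu_{A'}^{-1}(0) = \mu_A^{-1}(0)$ as subschemes of $\C^{2n}$; the $\T_\C^d$-orbits for the two actions coincide; by the semistability criterion in Remark \ref{rem:semistable affine}, a point is $(A',P\alpha)$-semistable iff it is $(A,\alpha)$-semistable; and $\om_\C$, the $\Cstar$-action, and the quotient torus $\T_\C^{n-d}$ are literally the same. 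So $Y_A(\alpha)$ and $Y_{A'}(P\alpha)$ are the same variety with the same structures, and it remains to treat the column moves (with the parameter no longer changing).

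Next, the column moves on $\C^{2n} = T^*\C^n$. A permutation of the columns of $A$ is matched by the corresponding permutation of the $\C^2$-blocks $(z_i,w_i)$: this is a holomorphic symplectomorphism of $(\C^{2n},\om_\C)$, commutes with the scaling $\Cstar$-action, intertwines the $\T_\C^n$-action with itself up to the induced permutation of torus factors, and carries $\mu_A$ to $\mu_{A'}$. A sign change $\bm{a_i}\mapsto-\bm{a_i}$ is matched by the $90^\circ$ rotation $(z_i,w_i)\mapsto(w_i,-z_i)$ in the $i$-th block, identity elsewhere; one checks directly that this is a holomorphic symplectomorphism (the sign is exactly what keeps $dz_i\wedge dw_i$ invariant), that it is $\Cstar$-equivariant, that it intertwines the $\T_\C^n$-action with the automorphism inverting the $i$-th coordinate torus, and that — since the conditions ``$z_i\neq0$'' and ``$w_i\neq0$'' get swapped together with $\bm{a_i}\mapsto-\bm{a_i}$ — it maps $\mu_A^{-1}(0)$ onto $\mu_{A'}^{-1}(0)$ and the $\alpha$-semistable locus onto the $\alpha$-semistable locus for $A'$, again by the formula in Remark \ref{rem:semistable affine}.

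Finally, I would assemble the pieces. Each elementary step is a $\Cstar$-equivariant symplectomorphism of $\C^{2n}$ intertwining the two Hamiltonian $\T_\C^d$-actions up to an automorphism of $\T_\C^d$, sending $\mu_A^{-1}(0)^{\alpha\text{-}ss}$ to $\mu_{A'}^{-1}(0)^{\alpha'\text{-}ss}$, and equivariant for the residual torus $\T_\C^{n-d} = \T_\C^n/\T_\C^d$ after the induced identification of this quotient; such a map descends to an isomorphism $Y_A(\alpha)\xrightarrow{\sim}Y_{A'}(\alpha')$ that is $\Cstar\times\T_\C^{n-d}$-equivariant and respects the symplectic forms, since both are obtained from $\om_\C$ by symplectic reduction. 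Composing the finitely many steps gives the required isomorphism with $\alpha' = P\alpha$. The only place needing genuine care is the sign-change step: the signs in the rotation must be chosen so that holomorphicity, the symplectic form, the $\Cstar$-weight, and the torus weights are all respected simultaneously; the rest is bookkeeping with the defining exact sequence and the semistability criterion.
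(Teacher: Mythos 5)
Your argument is correct, and it follows the standard route: the paper itself only cites \cite[Lemma 2.25]{Nag} for this statement, and the proof there (as well as the underlying computation in \cite[Proposition 3.2]{AP}) proceeds exactly as you do, by factoring $A'=PAD$ into a reparametrization of $\T_\C^d$ (which leaves $\mu^{-1}(0)$ and its orbits unchanged and twists the character to $P\alpha$), a permutation of the $(z_i,w_i)$-blocks, and the rotation $(z_i,w_i)\mapsto(w_i,-z_i)$ for each sign flip, then checking compatibility with $\om_\C$, the moment map, the semistability locus, and the residual torus. No gaps.
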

Now, we give typical examples of hypertoric varieties. 

\begin{example}($A_{\ell-1}$-type surface singularities $S_{A_{\ell-1}}$)\label{ex:Atypesurface}\\
Consider the following exact sequence. 
\[\begin{tikzcd}[row sep=large, column sep=20ex, ampersand replacement=\&]
0\ar[r]\&\Z \ar[r, "{B_{\ell-1}=\begin{pmatrix}1\\1\\\vdots\\1\end{pmatrix}}"] \&\Z^\ell \ar[r, "{A_{\ell-1}=\begin{pmatrix} 1& & 0&-1\\ &\ddots&&\vdots\\ 0&&1&-1\\\end{pmatrix}}"] \&\Z^{\ell-1}\ar[r]\&0
\end{tikzcd}\]
For a generic $\alpha$, the corresponding symplectic resolution $\pi_\alpha : Y_A(\alpha) \to Y_A(0)$ is given by the following.  
\[
\begin{tikzcd}
Y_A(\alpha)\ar[r, "\sim"]\ar[d, "\pi_\alpha"]&\widetilde{S}_{A_{\ell-1}}\ar[d]&&:\text{the minimal resolution}\\
Y_A(0)\ar[r, "\sim"]&S_{A_{\ell-1}}:\ar[r, equal]&\left\{\det\begin{pmatrix}u&x\\y&u^{\ell-1}\end{pmatrix}=0\right\}&:\text{the $A_{\ell-1}$ type surface singularity}
\end{tikzcd},
\]
where the isomorphism $Y_A(0) \cong S_{A_{\ell-1}}$ is given by the following: 
\[\begin{tikzcd}
\C[\C^2/\Z_\ell]=\C[u, x, y]/\langle u^\ell-xy\rangle\ar[d, "\wr"]\\
\C[Y_{A_{\ell-1}}(0)]=\C[\mu^{-1}_{A_{\ell-1}}(0)]^{\T_\C^{\ell-1}}=\C[z_1w_1, \ldots, z_\ell w_\ell, \prod_{i=1}^\ell{z_i}, \prod_{i=1}^\ell{w_i}]/\langle z_1w_1=\cdots=z_\ell w_\ell\rangle
\end{tikzcd},\]
where $u\mapsto \ell z_iw_i$, $x\mapsto \prod_{i=1}^{\ell}{\sqrt{\ell} z_i}$, $y\mapsto \prod_{i=1}^{\ell}{\sqrt{\ell} w_i}$. One can easily check that this isomorphism $(\C^2/\Z_\ell, \om_{\text{st}})\cong(Y_{A_{\ell-1}}(0), \om)$ is an isomorphism as conical symplectic varieties, where $\Cstar$-action on $\C^2/\Z_\ell$ is induced from the natural scalar multiplication on $\C^2$, and $\om_{\text{st}}$ is induced from the standard symplectic form on $\C^2$. 
\end{example}

\begin{example}(The minimal nilpotent orbit closure $\overline{\calO}^{\min}_{A_{s-1}}$ of type $A_{s-1}$)\label{ex:minnilpotent}\\
Consider the following exact sequence. 
\[
\begin{tikzcd}[row sep=large, column sep=1ex, ampersand replacement=\&]
0\ar[r]\&[5ex]\Z^{s-1} \ar[r, "{B=\begin{pmatrix} 1 & &&\\ &\ddots&\vcbig{0}\\  &  &\ddots&\\ \vcbig{0}   &   &1\\ -1&\cdots&\cdots&-1\end{pmatrix}}"] \&[25ex] \Z^{s} \ar[r, "{A=\begin{pmatrix}1&1&\cdots&1\end{pmatrix}}"] \&[25ex]\Z\ar[r]\&[5ex] 0
\end{tikzcd}
\]
For a generic $\alpha>0$, one can prove directly that the corresponding symplectic resolution $\pi_\alpha : Y_A(\alpha) \to Y_A(0)$ is given by the following.
\[
\begin{tikzcd}[column sep=tiny, ampersand replacement=\&]
Y_A(\alpha)\ar[r, equal]\ar[d, "\pi_\alpha"]\&\left\{(\bm{z}, \bm{w}) \in \C^{2s} \ \left|\right. \ \bm{z}\neq0, \ \sum_{j=1}^{s}{z_jw_j}=0\right\}/\T_\C^1\ar[r, "\sim"]\&T^*\mathbb{P}^{s-1}\ar[d, "\pi'"]\\
Y_A(0)\ar[r, "\sim"]\&\Set{C\in \gersl_{s}(\C) \ | \ \text{All $2\times2$-minors of $C=0$}}\ar[r, equal]\&\overline{\calO}_{A_{s-1}}^{\min}
\end{tikzcd},
\]
where $\pi'$ is the Springer resolution. Concretely, $\pi_\alpha$ is given by the following. 
\[
\pi_\alpha(\bm{z}, \bm{w}):=\begin{pmatrix}z_1w_1&z_1w_2&\cdots&z_1w_{s}\\ w_1z_2&z_2w_2&\ddots&\vdots\\ \vdots&\ddots&\ddots&z_{s-1}w_{s}\\ w_1z_{s}&\cdots&w_{s-1}z_{s}&z_{s}w_{s} \end{pmatrix}.
\]

\end{example}

\begin{example}(A variant $\overline{\calO}^{\min}(\ell_1, \ell_2, \ell_3)$ of the minimal nilpotent closure $\overline{\calO}^{\min}_{A_{2}}$)\label{ex:Ominlll}\\
Consider the following $B$:
\[B=\begin{array}{rccll}
	\ldelim({11}{4pt}[] &1&0&\rdelim){11}{4pt}[]& \rdelim\}{3}{10pt}[$\ell_1$]\\
	&\vdots&\vdots& & \\
	&1&0& & \\
	&0&1& & \rdelim\}{3}{10pt}[$\ell_2$] \\
	&\vdots&\vdots& & \\
	&0&1& & \\
	&-1 &-1 & & \rdelim\}{3}{10pt}[$\ell_3$]\\
	&\vdots&\vdots& & \\
	&-1&-1&& \\
	\end{array} \ \ \ \ .\]
We denote by $\overline{\calO}^{\min}(\ell_1, \ell_2, \ell_3)$ the corresponding 4-dimensional affine hypertoric variety. When $\ell_1=\ell_2=\ell_3=1$, this is the same as $\overline{\calO}^{\min}_{A_2}$. 
The coordinate ring $\C[\overline{\calO}^{\min}(\ell_1, \ell_2, \ell_3)]$ can be computed as the following: 

\[\C[\overline{\calO}^{\min}(\ell_1, \ell_2, \ell_3)]=\C[z_1^{\ell_1}, w_1^{\ell_1}, z_1w_1, z_2^{\ell_2}, w_2^{\ell_2}, z_2w_2, z_3^{\ell_3}, w_3^{\ell_3}, z_3w_3]^{\T_\C^1}/\langle z_1w_1+z_2w_2+z_3w_3\rangle.\]
In particular, we get the following description of $\overline{\calO}^{\min}(\ell_1, \ell_2, \ell_3)$: 
\[\overline{\calO}^{\min}(\ell_1, \ell_2, \ell_3):=\Set{\begin{pmatrix}u_1&x_{12}&x_{13}\\y_{12}&u_2&x_{23}\\y_{13}&y_{23}&u_3\end{pmatrix} \in \gersl_3(\C) \ | \ \text{{\rm All 2 $\times$ 2 minors of}} \begin{pmatrix}u_1^{\ell_1}&x_{12}&x_{13}\\y_{12}&u_2^{\ell_2}&x_{23}\\y_{13}&y_{23}&u_3^{\ell_3}\end{pmatrix}=0}.\]
In the similar way, one can consider the higher dimension analogue $\overline{\calO}^{\min}(\ell_1, \ldots, \ell_s)$ (cf.\ \cite[{Remark 4.9}]{Nag}). 
By the classification \cite[{Theorem 4.8}]{Nag}, it is known that 4-dimensional affine hypertoric varieties are isomorphic to the product of two $A$-type surface singularities or $\overline{\calO}^{\min}(\ell_1, \ell_2, \ell_3)$.

\end{example}


Finally, we remark the following lemma. 

\begin{lemma}{\rm (Gale duality cf. \cite[{Lemma 5.7}]{Nag})}\label{lem:Gale}\\
Assume $A=[\bm{a_1}, \ldots ,\bm{a_n}] \in M_{d \times n}(\R)$ and $B=[\bm{b_1}, \ldots ,\bm{b_n}]^T \in  M_{n \times (n-d)}(\R)$ satisfy the exact sequence 
\[\begin{tikzcd}0\ar[r]&\R^{n-d}\ar[r, "B"]&\R^n\ar[r, "A"]&\R^d\ar[r]&0\end{tikzcd}.\]
For a partition $\{1,2, \ldots ,n\}=I\sqcup J$ and $r \in \Z_{\geq 0}$ satisfying $0 \leq |I|-r \leq n-d, \ 0 \leq r \leq d$, we have 
\[
\dim_\R\Span_\R(\bm{b_i} \ | \ i \in I)=|I|-r \Leftrightarrow \dim_\R\Span_\R(\bm{a_j} \ | \ j \in J)=d-r
.\] 
\end{lemma}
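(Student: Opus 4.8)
The plan is to distill from the exact sequence a single unconditional identity relating $\dim_\R\Span_\R(\bm{b_i}\mid i\in I)$ and $\dim_\R\Span_\R(\bm{a_j}\mid j\in J)$, after which the claimed biconditional falls out by inspection. Throughout I would write $V_J:=\{x\in\R^n : x_i=0 \text{ for all } i\notin J\}$, so that $\dim_\R V_J=|J|=n-|I|$, and let $p_I:\R^n\to\R^I$ denote the projection onto the coordinates indexed by $I$.

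First I would reformulate both spans as ranks of auxiliary maps. The composite $p_I\circ B:\R^{n-d}\to\R^I$ is the matrix whose rows are exactly the $\bm{b_i}$ with $i\in I$, so $\dim_\R\Span_\R(\bm{b_i}\mid i\in I)=\rank(p_I\circ B)$; and the restriction $A|_{V_J}:V_J\to\R^d$ has image $\Span_\R(\bm{a_j}\mid j\in J)$. The key observation is that these two maps have essentially the same kernel: since $B$ is injective with $\Image B=\Ker A$, the isomorphism $B:\R^{n-d}\xrightarrow{\sim}\Ker A$ identifies $\Ker(p_I\circ B)$ with $\{x\in\Ker A : x_i=0 \text{ for all } i\in I\}=\Ker A\cap V_J=\Ker(A|_{V_J})$, so that $\dim_\R\Ker(p_I\circ B)=\dim_\R\Ker(A|_{V_J})$.

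Next I would apply the rank–nullity theorem twice, to $p_I\circ B$ on $\R^{n-d}$ and to $A|_{V_J}$ on $V_J$:
\[n-d=\dim_\R\Span_\R(\bm{b_i}\mid i\in I)+\dim_\R\Ker(p_I\circ B),\]
\[|J|=\dim_\R\Span_\R(\bm{a_j}\mid j\in J)+\dim_\R\Ker(A|_{V_J}).\]
Subtracting the second equation from the first, using the kernel identity from the previous step together with $|J|=n-|I|$, yields the unconditional equality
\[\dim_\R\Span_\R(\bm{b_i}\mid i\in I)-|I|=\dim_\R\Span_\R(\bm{a_j}\mid j\in J)-d.\]
Both conditions in the lemma assert that one of these differences equals $-r$; hence $\dim_\R\Span_\R(\bm{b_i}\mid i\in I)=|I|-r$ if and only if $\dim_\R\Span_\R(\bm{a_j}\mid j\in J)=d-r$, which is the claim. (The inequalities imposed on $r$ and on $|I|-r$ only guarantee that the two conditions are individually sensible; the displayed identity itself holds with no hypothesis on $r$.)

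I expect the only genuine subtlety to be the kernel identification in the second step: it uses exactness in the strong form $\Image B=\Ker A$, not merely $\Image B\subseteq\Ker A$, so that the constraint ``all coordinates in $I$ vanish'' transports exactly between the $B$-side and the $A$-side. Everything else is routine bookkeeping with the rank–nullity theorem, so I would not anticipate any further difficulty.
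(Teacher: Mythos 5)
Your proof is correct: the kernel identification $\Ker(p_I\circ B)\cong\Ker A\cap V_J=\Ker(A|_{V_J})$ via the injectivity of $B$ and exactness $\Image B=\Ker A$ is exactly the right mechanism, and the two applications of rank--nullity give the unconditional identity $\dim\Span(\bm{b_i}\mid i\in I)-|I|=\dim\Span(\bm{a_j}\mid j\in J)-d$, from which the lemma is immediate. The paper itself gives no proof (it cites \cite[Lemma 5.7]{Nag}), and your argument is the standard one for Gale duality, so there is nothing to flag.
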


\section{A stratification of affine hypertoric varieties}

In this section, for the later discussion, we recall a result in \cite{PW} on the stratification of $Y_A(0)$ and we describe the singular locus of $Y_A(0)$. 

To describe the singular locus $\Sing (Y_A(0))$ of an affine hypertoric variety $Y_A(0)$, we consider the associated hyperplane arrangement as the following. 

\begin{definition}(The associated hyperplane arrangement $\calH_B^{\alpha}$ to $Y_A(\alpha)$)\label{def:arr}\\
In the above setting, for $\alpha=A\widetilde{\alpha}$ ($\widetilde{\alpha}\in\Z^n$), we define the {associated hyperplane arrangement} $\calH_B^{\alpha}$ to $Y_A(\alpha)$ by 
\[\calH_B^\alpha:=\Set{H_i : \langle \bm{b_i}, -\rangle=-\tilde{\alpha_i}}\subset \R^{n-d}.\]
If $\alpha=0$, we will take $\widetilde{\alpha}$ as $\bm{0}$. 
\end{definition}
\begin{remark}\hspace{2pt}\\
\vspace{-5mm}
\begin{itemize}
\item[(1)] $\calH_B^{\alpha}$ is independent on the choice of $\widetilde{\alpha}$ up to translations. In fact, if $A\widetilde{\alpha}=A\widetilde{\alpha}'$, then $\widetilde{\alpha}'-\widetilde{\alpha}=B\bm{v}\in\Image B$. Thus two arrangements are translated by $\bm{v}$.  
\item[(2)] We consider $\calH_B^{\alpha}$ as a multi-arrangement, i.e., a multiset of hyperplanes. 
\end{itemize}  
\end{remark}

Below, we will mainly consider $\calH_B^0$. A subset $F\subseteq\{1,2,\ldots,n\}$ is called a {\it flat} if it satisfies $F=\{i \ | \ \bigcap_{j\in F}{H_j}\subseteq H_i\}$. For a flat $F$, we set $H_F:=\bigcap_{j\in F}{H_j}$ and define the {\it rank} of $F$ as $\rank F:={\codim} H_F$. In particular, each rank 1 flat is an index set of parallel hyperplanes in $\calH_B^{0}$. 

\noindent For a flat $F$, we consider the following exact sequence induced from\\ $\begin{tikzcd}[column sep=small]0\ar[r]&\Z^{n-d}\ar[r, "B"]&\Z^n\ar[r, "A"]&\Z^d\ar[r]&0\end{tikzcd}$ and the natural projection $\Z^n\twoheadrightarrow\Z^{|F|}$:
 \[
\begin{tikzcd}
0\ar[r]&\Z^{n-d}/H_F\ar[r, "{B}_F"]&\Z^{|F|}\ar[r, "{A}_F"]&\Z^d/\sum_{i \notin F}\Z\bm{a_i}\ar[r]&0
\end{tikzcd}.
\]
In \cite{PW}, for a general $A$, they give a stratification of $Y_A(0)$ by smooth strata which indexed by flats of $\calH_B^0$. Also, they describe the slice of each stratum as the following. Below, we consider $Y_A(0)$ as the set of closed $\T_\C^d$-orbits $[\bm{z}, \bm{w}]:=\T_\C^d\cdot(\bm{z}, \bm{w})$ in $\mu^{-1}(0)$ as usual in geometric invariant theory. 
\begin{theorem}{\rm (\cite[Lemma 2.5]{PW})}\label{thm:PW}\\
In the setting above, 
\[Y_A(0)=\bigsqcup_{F: \text{flat}}{\mathring{Y}(A, 0)^F}\]
is a stratification by $2(n-d-\rank F)$ dimensional smooth strata $\mathring{Y}(A, 0)^F$ defined as the following: 
\[\mathring{Y}(A, 0)^F:={\Set{[\bm{z}, \bm{w}] \in Y_A(0) \ | \ \text{$\T_\C^d\cdot (\bm{z}, \bm{w})$ is closed in $\mu_A^{-1}(0)$, and} \ (z_j, w_j)=0 \Leftrightarrow j \in F}.}\]
Moreover, the slice to each stratum $\mathring{Y}(A, 0)^F$ is given by $Y(A_F, 0)$. 
\end{theorem}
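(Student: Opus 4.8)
The plan is to combine standard affine GIT with a local normal form for symplectic quotients --- the ``symplectic slice theorem'', obtained from Luna's \'etale slice theorem together with the $\T_\C^d$-invariance of the moment map --- using the combinatorial closedness criterion for torus orbits and Gale duality (Lemma~\ref{lem:Gale}) for the bookkeeping. First I would record the set-up: since $\alpha=0$, every point of $\mu_A^{-1}(0)$ is $0$-semistable, so $Y_A(0)=\Spec\C[\mu_A^{-1}(0)]^{\T_\C^d}$ and its points are in bijection with the $\T_\C^d$-orbits that are closed in $\mu_A^{-1}(0)$, equivalently closed in $\C^{2n}$ (as $\mu_A^{-1}(0)$ is Zariski-closed). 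For such an orbit, represented by $(\bm z,\bm w)$, the subset $F:=\{\,j : (z_j,w_j)=(0,0)\,\}$ depends only on the orbit, so $F\mapsto \mathring{Y}(A,0)^F$ partitions $Y_A(0)$ into the nonempty pieces indexed by the subsets that occur; it then remains to identify these with the flats of $\calH_B^0$ and to describe the pieces.

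To match the occurring subsets with flats I would use two facts. First, the weights of the $\T_\C^d$-action on $\C^{2n}$ are $\bm a_1,\dots,\bm a_n$ on $z_1,\dots,z_n$ and $-\bm a_1,\dots,-\bm a_n$ on $w_1,\dots,w_n$, and a torus orbit through a point is closed in $\C^{2n}$ if and only if the cone spanned by the weights of its non-vanishing coordinates is a linear subspace (equivalently, contains $0$ in its relative interior). Second, by matroid/Gale duality (a consequence of Lemma~\ref{lem:Gale}), $F$ is a flat of $\calH_B^0$ if and only if no $i\notin F$ is a coloop of $M(A)$ restricted to $\{1,\dots,n\}\setminus F$, i.e.\ $\bm a_i\in\Span_\R(\bm a_j : j\notin F,\ j\neq i)$ for all $i\notin F$. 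Granting these: if $(\bm z,\bm w)\in\mu_A^{-1}(0)$ has a closed orbit and $F$ were not a flat, pick such a coloop $i_0\notin F$; since $\bm a_{i_0}$ is then independent of the other $\bm a_j$ with $j\notin F$, while the relation $\sum_j z_jw_j\bm a_j=0$ only involves such $\bm a_j$, it forces $z_{i_0}w_{i_0}=0$, so $i_0$ lies in exactly one of $\{j:z_j\neq0\}$, $\{j:w_j\neq0\}$, and a linear functional vanishing on $\Span_\R(\bm a_j : j\notin F,\ j\neq i_0)$ and of suitable sign on $\bm a_{i_0}$ produces a one-parameter subgroup witnessing non-closedness of the orbit --- a contradiction. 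Conversely, given a flat $F$, choose a relation $\sum_{i\notin F}c_i\bm a_i=0$ with all $c_i\neq 0$ (possible by the above characterization) and set $z_i=w_i=0$ for $i\in F$, $z_i=1$, $w_i=c_i$ for $i\notin F$: this lies in $\mu_A^{-1}(0)$, has vanishing set exactly $F$, and has closed orbit since the relevant weight cone is the subspace $\Span_\R(\bm a_i : i\notin F)$. This yields $Y_A(0)=\bigsqcup_{F\ \mathrm{flat}}\mathring{Y}(A,0)^F$.

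For smoothness of the strata, the dimension formula, and the slice statement I would apply the symplectic slice theorem at a point $x=(\bm z,\bm w)$ with closed orbit over $\mathring{Y}(A,0)^F$. Its stabilizer $H:=\Stab_{\T_\C^d}(x)=\{t\in\T_\C^d : t^{\bm a_j}=1\ \text{for}\ j\notin F\}\cong\Hom(\Z^d/\textstyle\sum_{j\notin F}\Z\bm a_j,\Cstar)$ is a torus of dimension $|F|-\rank F$; connectedness uses unimodularity of $A$ (so $\sum_{j\notin F}\Z\bm a_j$ is saturated), and the rank count is $\dim\Span_\R(\bm a_j : j\notin F)=d-|F|+\rank F$ from Lemma~\ref{lem:Gale}. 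The symplectic slice $V=W^{\om_\C}/W$ at $x$, with $W=T_x(\T_\C^d\cdot x)$ isotropic (as $\mu_A$ is $\T_\C^d$-invariant), splits $H$-equivariantly and symplectically as $V\cong\C^{2|F|}\oplus V'$: the first summand is spanned by the coordinates $z_i,w_i$ with $i\in F$, on which $H$ acts through the characters $\pm(\bm a_i\bmod\textstyle\sum_{j\notin F}\Z\bm a_j)$ with moment map $\mu_{A_F}$ --- i.e.\ it is exactly the linear datum whose reduction by $H$ is $Y(A_F,0)$, $A_F$ being the contraction of the exact sequence in the statement --- while $H$ acts trivially on $V'$ (every coordinate $z_j,w_j$ with $j\notin F$ is $H$-fixed), so $V'$ is a smooth symplectic vector-space factor of dimension $\dim V-2|F|=2(n-d-\rank F)$. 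Hence, \'etale-locally near $[x]$,
\[
Y_A(0)\ \cong\ Y(A_F,0)\times\C^{\,2(n-d-\rank F)},
\]
with $[x]\mapsto(o,0)$, $o$ the vertex of the cone $Y(A_F,0)$; under this isomorphism $\mathring{Y}(A,0)^F$ is $\{o\}\times\C^{\,2(n-d-\rank F)}$ (a nearby point has vanishing set $F$ iff all its $z_i,w_i$ with $i\in F$ vanish, its $z_j,w_j$ with $j\notin F$ being automatically nonzero near $x$), so $\mathring{Y}(A,0)^F$ is smooth of dimension $2(n-d-\rank F)$ and its transverse slice is $Y(A_F,0)$.

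The main obstacle is the slice computation of the third step: pinning down the symplectic slice $V$, the splitting $V\cong\C^{2|F|}\oplus V'$, and the $H$-action on the first summand precisely enough to recognize it as the hypertoric datum of the contraction $A_F$ in the stated exact sequence --- including the identification $H\cong\T_\C^{\,|F|-\rank F}$ and the Gale-duality dimension counts via Lemma~\ref{lem:Gale}. Once this and the matroid-duality fact of the second step are in hand, the disjoint decomposition into the $\mathring{Y}(A,0)^F$, the dimension $2(n-d-\rank F)$, the smoothness of the strata, and the slice description all follow.
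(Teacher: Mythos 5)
The paper does not prove this statement at all: it is quoted verbatim from \cite[Lemma 2.5]{PW}, so there is no internal proof to compare your argument against. Judged on its own terms, your proposal is correct in outline and is essentially the standard argument one would expect behind the cited lemma. The bookkeeping is right: your Gale-dual characterization of flats ($F$ is a flat iff no $i\notin F$ is a coloop of the restriction of $M(A)$ to $[n]\setminus F$) follows from Lemma \ref{lem:Gale} exactly as you say; the weight-cone criterion for closedness of torus orbits correctly rules out non-flats and the explicit point $z_i=1$, $w_i=c_i$ shows every flat occurs; and the stabilizer computation $H\cong\Hom(\Z^d/\sum_{j\notin F}\Z\bm{a_j},\Cstar)$, with connectedness from unimodularity and $\dim H=|F|-\rank F$ from Gale duality, is correct.

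The one place where the write-up is genuinely thinner than a complete proof is the step you yourself flag: passing from Luna's \'etale slice theorem to the statement that, \'etale-locally near $[x]$, $\mu_A^{-1}(0)/\hspace{-3pt}/\T_\C^d\cong \mu_V^{-1}(0)/\hspace{-3pt}/H$ with $V$ the symplectic slice. In the algebraic category Luna gives $\C^{2n}\cong \T_\C^d\times_H N_x$ \'etale-locally, but one must still identify $\mu_A^{-1}(0)\cap N_x$ with the zero fibre of the \emph{linearized} (quadratic) moment map on $V$; this is not automatic and uses that $\mu_A^{-1}(0)$ is a reduced complete intersection (which the present paper records in Proposition \ref{prop:codim}), or an explicit change of coordinates exploiting that $\mu_A$ is already quadratic and splits as $\mu_{A_F}\oplus(\text{part supported off }F)$ after restricting to the slice. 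Your decomposition $\ker d\mu_x=\C^{2|F|}\oplus K'$ with $H$ acting trivially on $K'$ is the right mechanism and makes this workable, so I would call this an acknowledged technical step rather than a gap in the strategy. With that step carried out, the disjointness, the identification of the index set with flats, the smoothness and dimension of the strata, and the slice description all follow as you describe.
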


From this theorem, we want to describe the codimension 2 singular locus $\Sigma_{\codim2}$ of $Y_A(0)$, which is the complement of codimension $\geq4$ singular locus in $\Sing (Y_A(0))$. To do so, by interchanging the row vectors $\bm{b_i}$ of $B$, and multiplying $\bm{b_i}$ by $\pm1$ if necessarily, we can assume  
\vspace{2pt}

\noindent {\setlength\arraycolsep{1pt}$B^T\hspace{-5pt}=$\raise1ex\hbox{$\begin{array}{rcccccccl}
&\multicolumn{3}{c}{\smash{\overbrace{\hspace{6ex}}^{\ell_1}}}&&\multicolumn{3}{c}{\smash{\overbrace{\hspace{6ex}}^{\ell_s}}}&\\
\ldelim[{1}{3pt}[]&\bm{b^{(1)}}&\cdots&\bm{b^{(1)}}&\cdots&\bm{b^{(s)}}&\cdots&\bm{b^{(s)}}&\rdelim]{1}{0.1pt}[]\\
\end{array}$} \ . 
Since $B$ is unimodular, if  $k_1\neq k_2$, then $\bm{b^{(k_1)}}\neq\ell\bm{b^{(k_2)}}$ for any $\ell\in\Z\setminus\{0\}$. We denote by $[n]=\bigsqcup_{k=1}^s{F_k}$ the corresponding decomposition of indices set. This means that each rank 1 flat of $\calH_B^0$ is given by $F_k$'s. First, we define a condition that $Y_A(0)$ has no codimension 2 singular locus in terms of the hyperplane arrangement $\calH_B^0$. 

\begin{definition}{\rm (Simple hyperplane arrangements and simple hypertoric varieties)}\label{def:simple}\\
In the above setting, we call $B$ (or $Y_A(0)$) {\it simple} if $\ell_k=1$ for any $k$, that is, the associated hyperplane arrangement $\calH_B^0$ has no multiplicated hyperplanes.   
\end{definition}

Now we can describe the codimension 2 singular locus $\Sigma_{\codim2}$ of $Y_A(0)$ as the following. We will use the second assertion later as well.  
\begin{corollary}\label{cor:codim2sing}
In the setting above, we have the following:
\begin{itemize}
\item[(1)] The codimension 2 singular locus $\Sigma_{\codim2}$ of $Y_A(0)$ is 
\[\Sigma_{\codim2}=\bigsqcup_{k : \ell_k\geq2 }{\mathring{Y}(A, 0)^{F_k}}.\]
Additionally, the slice to $\mathring{Y}(A, 0)^{F_k}$ is the $A_{\ell_k-1}$ type surface singularity $S_{A_{\ell_k-1}}$ (cf.\ Example \ref{ex:Atypesurface}). In particular, $B$ is simple if and only if  
\[\codim_{Y_A(0)} \Sing(Y_A(0))\geq4. \]
\item[(2)] We have 
\begin{align*}
Y_A(0)_{\reg}&\subset Y_A(0)-\overline{\Sigma_{\codim2}}\\
&=\Set{[\bm{z}, \bm{w}] \in Y_A(0) \ | \ {\begin{array}{l}\text{$\T_\C^d\cdot (\bm{z}, \bm{w})$ is closed in $\mu_A^{-1}(0)$, and }\\ \text{$\ell_k\geq2$} \Rightarrow \ z_{j_k}\neq0 \ \text{or} \  w_{j_k}\neq0 \  (\exists j_k \in F_k)\end{array}}}\\
&=\Set{[\bm{z}, \bm{w}] \in Y_A(0) \ | \ {\begin{array}{l}\text{$\T_\C^d\cdot (\bm{z}, \bm{w})$ is closed in $\mu_A^{-1}(0)$, and }\\ \text{$\ell_k\geq2$} \Rightarrow \ \prod_{j\in F_k}{z_j}\neq0 \ \text{or} \  \prod_{j\in F_k}{w_j}\neq0\end{array}}}
\end{align*}
\end{itemize}
\end{corollary}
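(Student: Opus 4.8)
The plan is to combine the stratification of Theorem~\ref{thm:PW} with the explicit description of semistable points recorded in Remark~\ref{rem:semistable affine}. First I would recall from Theorem~\ref{thm:PW} that $Y_A(0)_{\reg}$ is the union of those strata $\mathring{Y}(A,0)^F$ whose slice $Y(A_F,0)$ is smooth, equivalently a point; since $\dim Y(A_F,0) = 2(n-d-\rank F)$ and the singular locus is closed, the regular locus is contained in the union of all strata of maximal dimension together with those lower strata whose slice is still smooth. In particular $Y_A(0)_{\reg}$ is disjoint from $\overline{\Sigma_{\codim 2}}$, because the latter is a union of closures of strata, each of which lies in $\Sing(Y_A(0))$ by part~(1). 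This gives the first inclusion for free once part~(1) is established, so the real content is the two displayed equalities describing $Y_A(0)-\overline{\Sigma_{\codim 2}}$.

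For part~(1), I would argue as follows. By Theorem~\ref{thm:PW} the slice transverse to $\mathring{Y}(A,0)^F$ is the affine hypertoric variety $Y(A_F,0)$, built from the Gale-dual data $\overline{B}_F$ obtained by restricting the arrangement to the parallel classes meeting $F$. A stratum contributes to $\Sing(Y_A(0))$ exactly when $Y(A_F,0)$ is singular, and it contributes to the codimension-$2$ part $\Sigma_{\codim 2}$ exactly when $Y(A_F,0)$ is two-dimensional and singular — i.e. $\rank F = n-d-1$ and $A_F$ has a repeated Gale vector. Unwinding the combinatorics (using that $B^T$ has been put in the normal form with blocks $\bm{b^{(k)}}$ of multiplicity $\ell_k$, and that distinct $\bm{b^{(k)}}$ are not proportional because $B$ is unimodular), the rank-$1$ flats are precisely the $F_k$, and the only two-dimensional singular slices arise from $F = F_k$ with $\ell_k \ge 2$; in that case $A_{F_k}$ is (up to $\sim$) the matrix $A_{\ell_k-1}$ of Example~\ref{ex:Atypesurface}, whence the slice is $S_{A_{\ell_k-1}}$. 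This proves the displayed formula for $\Sigma_{\codim 2}$ and the simplicity criterion: $B$ simple $\iff$ there is no $F_k$ with $\ell_k\ge 2$ $\iff$ $\codim_{Y_A(0)}\Sing(Y_A(0))\ge 4$. I would double-check that no higher-codimension flat produces an extra codimension-$2$ stratum — this follows since the dimension of a stratum strictly drops when $\rank F$ grows, so any $F \supsetneq F_k$ gives codimension $\ge 4$.

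For part~(2), I would identify $\overline{\Sigma_{\codim 2}} = \bigsqcup_{k:\ell_k\ge 2}\overline{\mathring{Y}(A,0)^{F_k}}$, and observe that the closure of a stratum $\mathring{Y}(A,0)^F$ is the union of the strata indexed by flats containing $F$; concretely $\overline{\mathring{Y}(A,0)^{F_k}} = \{[\bm z,\bm w] : (z_j,w_j)=0 \ \forall j\in F_k\}$ (intersected with the closed-orbit locus). Hence a closed-orbit point $[\bm z,\bm w]$ lies outside $\overline{\Sigma_{\codim 2}}$ iff for every $k$ with $\ell_k\ge 2$ there is some $j_k\in F_k$ with $(z_{j_k},w_{j_k})\ne 0$, which is the first equality. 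The second equality — replacing ``$z_{j_k}\ne 0$ or $w_{j_k}\ne 0$ for some $j_k\in F_k$'' by ``$\prod_{j\in F_k}z_j\ne 0$ or $\prod_{j\in F_k}w_j\ne 0$'' — is where I expect the one genuinely non-formal step: a priori the first condition is weaker. The point is the relation $\mu_A(\bm z,\bm w)=0$ together with the closedness of the orbit: within a rank-$1$ flat $F_k$ the vectors $\bm a_j$ ($j\in F_k$) all coincide up to sign with a single $\pm\bm a^{(k)}$, so $z_jw_j$ is constrained, and for a point with closed orbit one shows that if \emph{some} $(z_{j_k},w_{j_k})\ne0$ then in fact \emph{all} coordinates in that block in the relevant ($z$ or $w$) direction are nonzero — essentially the statement that closed orbits in the $A_{\ell-1}$-surface-singularity factor avoid the ``mixed zero'' configurations. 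I would make this precise using Remark~\ref{rem:semistable affine} applied to the slice $S_{A_{\ell_k-1}}$, or equivalently by the explicit coordinate description of $Y_{A_{\ell-1}}(0)$ in Example~\ref{ex:Atypesurface}, where $z_1w_1=\cdots=z_\ell w_\ell$ forces this dichotomy. That block-wise rigidity is the main obstacle; everything else is bookkeeping with flats and closures.
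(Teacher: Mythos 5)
Your overall route matches the paper's: part (1) via the stratification of Theorem~\ref{thm:PW} (the paper itself simply cites an earlier result for this part), and part (2) by reducing the final equality to a rigidity statement for closed orbits within a single parallel block $F_k$. Two points need repair. First, a slip in part (1): the slice $Y(A_F,0)$ has dimension $2\rank F$, so it is two-dimensional exactly when $\rank F=1$, not when $\rank F=n-d-1$; you recover the correct conclusion afterwards (the relevant flats are the rank-one flats $F_k$), but the criterion as you state it is wrong.

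Second, and more substantively, the one non-formal step in part (2) is asserted rather than proved, and neither of the two tools you name for it would close it. The relation $z_1w_1=\cdots=z_\ell w_\ell$ alone does not force the dichotomy (take $z_1\neq0$, $w_1=0$, $z_2=0$, $w_2\neq0$), and Remark~\ref{rem:semistable affine} is vacuous at $\alpha=0$ since every point is $0$-semistable; moreover Theorem~\ref{thm:PW} describes the slice at points of the stratum $\mathring{Y}(A,0)^{F_k}$, whereas the points at issue here are precisely not in that stratum. What is actually needed --- and what the paper supplies --- is an explicit destabilizing one-parameter subgroup: if $z_{j_k}\neq0$ but $z_{j'_k}=0$ for some $j_k,j'_k\in F_k$, then the moment map equation $(z_jw_j)_j\in\Ker A=\Image B$ forces $z_{j_k}w_{j_k}=z_{j'_k}w_{j'_k}=0$, hence $w_{j_k}=0$; the one-parameter subgroup of $\T_\C^n$ with weight $+1$ at $j_k$, $-1$ at $j'_k$ and $0$ elsewhere lies in $\T_\C^d=\Ker B^T$ because $\bm{b_{j_k}}=\bm{b_{j'_k}}$, and its limit as $s\to0$ kills the coordinate $z_{j_k}$, producing a point outside the orbit and contradicting closedness. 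Your phrase ``closed orbits avoid the mixed-zero configurations'' is exactly this statement, but the argument that proves it is the ingredient your sketch leaves out.
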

\begin{proof}
(1) \ See \cite[{Corollary 3.7}]{Nag}. 


(2) \ We only have to show the final equality. Assume that $\T_\C^d\cdot(\bm{z}, \bm{w})\in\mu_A^{-1}(0)$ is closed and we have $z_{j_k}\neq0$ or $w_{j_k}\neq0$ for some $j_k\in F_k$ such that $|F_k|=\ell_k\geq2$. Fix any $k$ such that $\ell_k\geq2$ and we can assume $z_{j_k}\neq0$ without loss of generality. In this setting, we will show $\prod_{j\in F_k}{z_j}\neq0$. If not so, there exists a $j'_k\in F_k$ such that $z_{j'_k}=0$. In particular, we have $z_{j'_k}w_{j'_k}=0$. Note that $(\bm{z}, \bm{w})\in\mu^{-1}(0)$ means $(z_1w_1, \ldots, z_nw_n)^T\in\Image(B^T)$, in particular, we have $0=z_{j'_k}w_{j'_k}=z_{j_k}w_{j_k}$ by $\bm{b_{j_k}}=\bm{b_{j'_k}}=\bm{b^{(k)}}$. Thus, we have $w_{j_k}=0$.   
Now, consider a one parameter subgroup $\rho : \Cstar \to \T_\C^n$ defined by $\rho_{j_k}(s)=s$, $\rho_{j'_k}(s)=s^{-1}$, and $\rho_i(s)=1$ for any $i\in[n]\setminus\{j_k, j'_k\}$. Since we have $\bm{b_{j_k}}=\bm{b_{j'_k}}=\bm{b^{(k)}}$, $\rho$ defines a one parameter subgroup in $\T_\C^d=\Ker (B^T : \T_\C^n \to \T_\C^{n-d})$. Note that the $j_k$-th and $j'_k$-th components of $\rho(s)(\bm{z}, \bm{w})$ is  
\[(sz_{j_k}, s^{-1}w_{j_k}, s^{-1}z_{j'_k}, sw_{j'_k})=(sz_{j_k}, 0, 0, sw_{j'_k})\xrightarrow{s\rightarrow0} (0, 0, 0, 0).\]  
However, this contradicts to the closedness of $\T_\C^d\cdot(\bm{z}, \bm{w})$ and $z_{j_k}\neq0$. 
\end{proof}
By Corollary \ref{cor:codim2sing} (1) and Proposition \ref{prop:codim4fundamental}, we have the following corollary. 

\begin{corollary}\label{cor:hypertoric trivial}
For any simple affine hypertoric variety $Y_A(0)$, the fundamental group $\pi_1(Y_A(0)_{\reg})$ is trivial.  
\end{corollary}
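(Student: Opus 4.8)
The plan is to deduce Corollary~\ref{cor:hypertoric trivial} directly from the two ingredients cited, namely Corollary~\ref{cor:codim2sing}(1) and Proposition~\ref{prop:codim4fundamental}. First I would recall that $Y_A(0)$ is a conical symplectic variety with weight $2$ (this is part of the basic properties of hypertoric varieties recalled in Section~3), and that for a \emph{generic} $\alpha\in\Z^d$ the natural projective morphism $\pi_\alpha : Y_A(\alpha)\to Y_A(0)$ is a symplectic resolution. Thus the hypothesis ``$Y$ admits a symplectic resolution'' of Proposition~\ref{prop:codim4fundamental} is automatically satisfied for \emph{any} affine hypertoric variety.

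Next I would supply the remaining hypothesis of Proposition~\ref{prop:codim4fundamental}, which is the codimension condition $\codim_{Y_A(0)}\Sing(Y_A(0))\geq 4$. This is precisely where simplicity of $Y_A(0)$ enters: by Corollary~\ref{cor:codim2sing}(1), $B$ (equivalently $Y_A(0)$) is simple if and only if $\codim_{Y_A(0)}\Sing(Y_A(0))\geq 4$. Since we are assuming $Y_A(0)$ is simple, the codimension condition holds.

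With both hypotheses verified, I would invoke Proposition~\ref{prop:codim4fundamental} to conclude that $\pi_1(Y_A(0)_{\reg})\cong\pi_1(\widetilde{Y})\cong\pi_1(Y_A(0))=0$; in particular $\pi_1(Y_A(0)_{\reg})$ is trivial, which is the assertion. The whole argument is essentially a two-line citation chain, so there is no real obstacle here: the substantive work has already been done in establishing Proposition~\ref{prop:codim4fundamental} (which itself rests on Takayama's theorem and the semismallness of symplectic resolutions) and Corollary~\ref{cor:codim2sing}(1) (the identification of the codimension-$2$ singular locus via flats of $\calH_B^0$). The only point one should be slightly careful about is that Proposition~\ref{prop:codim4fundamental} is stated for a conical symplectic variety admitting a symplectic resolution, so I would make explicit that the generic-parameter resolution $\pi_\alpha$ plays that role; everything else is immediate.
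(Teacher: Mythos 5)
Your argument is exactly the paper's: the corollary is stated there as an immediate consequence of Corollary \ref{cor:codim2sing}(1) (simplicity $\Leftrightarrow$ $\codim\Sing(Y_A(0))\geq 4$) combined with Proposition \ref{prop:codim4fundamental}, with the symplectic resolution supplied by $\pi_\alpha$ for generic $\alpha$. Your write-up just makes the two hypotheses explicit, which is correct and complete.
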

\begin{remark}\hspace{2pt}\\
\vspace{-5mm}
\begin{itemize}
\item[(1)]
Later, we will show that the converse of this corollary is also true (cf. Corollary \ref{cor:converse}).  
\item[(2)]
Although we showed the above corollary by applying  Proposition \ref{prop:codim4fundamental} which is deduced from a very general theorem by Takayama, we can avoid this theorem to prove the above corollary. Actually, by the argument in Proposition \ref{prop:codim4fundamental}, we only have to show $\pi_1(Y_A(\alpha))=0$ for any hypertoric varieties with generic $\alpha$. To show this, we note that any smooth hypertoric variety $Y_A(\alpha)$ is homotopy equivalent to a Lawrence toric variety $X_A(\alpha):=(\C^{2n})^{\alpha-st}/\T_\C^d$ which is a smooth toric variety whose fan is maximal dimension (cf.\ \cite[Corollary 2.7]{HS}). Since it is known that a toric variety whose fan is maximal dimension is simply-connected (cf.\ \cite[Theorem 12.1.10]{CLS}), we deduce $\pi_1(Y_A(\alpha))\cong\pi_1(X_A(\alpha))=0$. 
\end{itemize}
\end{remark}
Although we only need the above corollary for the later discussion essentially, to be complete, we will describe the whole singular locus of $Y_A(0)$ below.   

\begin{corollary}\label{cor:singularlocus}
In the above setting, 
\begin{itemize}
\item[(1)] For any flat $F$, each (or equivalently, some) point in the stratum $\mathring{Y}(A, 0)^F$ is contained in the singular locus $\Sing(Y_A(0))$ if and only if $F$ is {\rm multiplicated}, i.e., $|F|\geq\rank F+1$. In particular, $Y_A(0)$ is smooth if and only if $B\sim I_n$, i.e., $d=0$. In this case, $Y_A(0)= \C^{2n}$. 
\item[(2)] 
We have a stratification: 
\begin{align*}
\Sing(Y_A(0))&=\bigsqcup_{F: \text{flat s.t. $|F|\geq\rank F+1$}}{\mathring{Y}_A(0)^F}\\
&=\Set{[\bm{z}, \bm{w}] \in Y_A(0) \ | \ {\begin{array}{l}\text{$\T_\C^d\cdot (\bm{z}, \bm{w})$ is closed in $\mu_A^{-1}(0)$, and }\\ \text{$\exists$ flat $F$ such that $|F|\geq\rank F+1$ and } \ (z_j, w_j)=0 \  (j \in F)\end{array}}}.
\end{align*}
\end{itemize}
\end{corollary}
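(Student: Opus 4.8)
The plan is to leverage Theorem \ref{thm:PW} together with the slice description it provides: every point of the stratum $\mathring{Y}(A,0)^F$ has an analytic neighborhood isomorphic to (an open subset of) $\mathring{Y}(A,0)^F \times Y(A_F,0)$, where $Y(A_F,0)$ is the slice coming from the induced exact sequence with the projected data $A_F$, $B_F$. Since the strata are smooth, a point of $\mathring{Y}(A,0)^F$ is a singular point of $Y_A(0)$ if and only if the slice $Y(A_F,0)$ is singular at its cone point. So for part (1), I would reduce to the following local statement: for a hypertoric variety built from the exact sequence $0\to\Z^{n-d}/H_F \xrightarrow{B_F} \Z^{|F|}\xrightarrow{A_F}\Z^d/\sum_{i\notin F}\Z\bm{a_i}\to 0$, the variety $Y(A_F,0)$ is smooth precisely when $B_F$ is (up to $\sim$) an identity matrix, i.e.\ when the rank of $B_F$ equals $|F|$, which in Gale-dual terms (Lemma \ref{lem:Gale}, or directly by dimension count) is exactly the condition $|F| = \rank F$; conversely $|F|\geq \rank F+1$ forces a genuine singularity. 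The cleanest way to see the ``only if'' direction is to note that if $B_F$ has full rank $|F|$ then $d - (\text{corank}) = 0$ so the torus quotient is trivial and $Y(A_F,0)=\C^{2|F|}$ is smooth; and to see the ``if'' direction, observe that when $|F|\geq\rank F+1$ there is at least one rank-$1$ multiplicated sub-flat inside $F$, and by Corollary \ref{cor:codim2sing}(1) the corresponding $A_{\ell-1}$ surface singularity ($\ell\geq 2$) appears transversally in the slice, so the slice — hence $Y_A(0)$ at that point — is singular. The special case $B\sim I_n$ ($d=0$) giving $Y_A(0)=\C^{2n}$ is then immediate: no flat is multiplicated, every point is smooth, and conversely smoothness of the whole variety forces every rank-$1$ flat $F_k$ to have $\ell_k=1$ and moreover every higher flat to be non-multiplicated, which by a Gale-duality argument (Lemma \ref{lem:Gale}) collapses $B$ to the identity.

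For part (2), the stratification displayed is then just the union over flats, split according to whether the stratum lies in $\Sing(Y_A(0))$ or not, using part (1): $\Sing(Y_A(0)) = \bigsqcup_{F:\,|F|\geq\rank F+1}\mathring{Y}(A,0)^F$. To rewrite this as the orbit-theoretic set on the right-hand side, I would argue as in the proof of Corollary \ref{cor:codim2sing}(2): a closed orbit $[\bm z,\bm w]$ with $(z_j,w_j)=0 \Leftrightarrow j\in F$ lies in $\mathring{Y}(A,0)^F$ for the flat $F$ it determines (one checks that the index set $\{j : (z_j,w_j)=0\}$ of vanishing coordinates of a closed orbit is automatically a flat — this uses the $\mu^{-1}(0)$ condition $(z_jw_j)_j\in\Image(B^T)$ and the closedness exactly as in the one-parameter-subgroup argument already given), and the point is singular iff that flat is multiplicated; this gives ``$\subseteq$''. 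For ``$\supseteq$'', given a closed orbit for which \emph{some} multiplicated flat $F'$ has all coordinates in $F'$ vanishing, the true vanishing flat $F\supseteq F'$ still satisfies $|F|\geq \rank F+1$ (a flat containing a multiplicated flat is multiplicated — again Lemma \ref{lem:Gale} / a rank count), so the point is singular.

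The main obstacle I anticipate is the bookkeeping around the passage from the global hypertoric variety to the slice $Y(A_F,0)$: one must be careful that the induced sequence genuinely has unimodular $A_F$ and nonzero $\bm b_j$'s (so that the slice is again a legitimate affine hypertoric variety to which the earlier results apply), and that ``$F$ multiplicated'' ($|F|\geq\rank F+1$) is correctly matched with ``$B_F$ not of full column rank'' — this is where Lemma \ref{lem:Gale} does the real work, converting the rank condition on the $\bm b_j$'s for $j\in F$ into a rank condition on the complementary $\bm a_j$'s, i.e.\ into $\rank F=\codim H_F$. Once that dictionary is set up, both parts follow by combining Theorem \ref{thm:PW}, Corollary \ref{cor:codim2sing}, and the $A_{\ell-1}$-surface-singularity model of Example \ref{ex:Atypesurface}; everything else is the same closedness-plus-one-parameter-subgroup manipulation already used above, so I would not belabor it.
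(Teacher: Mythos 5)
Your reduction to the slice $Y(A_F,0)$ and your treatment of the smooth direction are fine ($|F|=\rank F$ forces $B_F$ to be square unimodular, so the quotient torus is trivial and the slice is $\C^{2|F|}$), as is the matroid bookkeeping in part (2): a flat containing a dependent set is dependent, and Theorem \ref{thm:PW} already guarantees that the vanishing set of a closed orbit is a flat. The genuine gap is in the ``if'' direction of (1): your claim that $|F|\geq\rank F+1$ forces a rank-$1$ multiplicated sub-flat inside $F$ is false. Take $\bm{b_1}=e_1$, $\bm{b_2}=e_2$, $\bm{b_3}=-e_1-e_2$ (Example \ref{ex:minnilpotent}, i.e.\ $\overline{\calO}^{\min}_{A_2}$): the flat $F=\{1,2,3\}$ satisfies $|F|=3\geq\rank F+1=3$, yet no two of the $\bm{b_i}$ are parallel, so every rank-$1$ flat is a singleton. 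Your argument therefore only certifies singularity along the codimension-$2$ locus already covered by Corollary \ref{cor:codim2sing}, and it would wrongly conclude that every \emph{simple} hypertoric variety is smooth --- contradicting Corollary \ref{cor:converse} and the fact that $\overline{\calO}^{\min}_{A_{s-1}}$ ($s\geq3$) is simple but singular at the origin.

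The paper closes exactly this gap by a different mechanism: it first notes that $Y_A(0)$ is singular somewhere if and only if it is singular at the cone point (the stratum of the flat $[n]$), so the global smoothness criterion ``smooth $\Leftrightarrow d=0$'' is the $F=[n]$ instance of the stratum-by-stratum statement; it then inducts on $\dim Y_A(0)=2(n-d)$. For a proper nonempty flat $F$ one has $H_F\neq\{0\}$, hence $\rank F<n-d$, so the slice $Y_{A_F}(0)$ is a strictly lower-dimensional affine hypertoric variety (with $B_F$ having no zero rows), and the inductive hypothesis in the form of the global smoothness criterion for the slice gives precisely ``$Y_{A_F}(0)$ singular $\Leftrightarrow |F|-\rank F\geq1$''. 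If you want to keep your slice-based outline you need some such replacement for the broken step --- either this induction, or a direct proof that any affine hypertoric variety with $d\geq1$ (equivalently, with $[n]$ dependent) is singular at its cone point; the rank-$1$ flats alone cannot see this.
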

\begin{proof}

(1) \ First, we prove that the latter part follows from the former part. Since the origin 0 which is the stratum corresponding to the  flat $[n]$ is contained in the closure of any strata, $Y_A(0)$ is singular if and only if $0\in Y_A(0)$ is a singular point. Then, by applying the former part, we conclude that this is equivalent to $n\geq (n-d)+1$, i.e., $d\geq1$.    

Next, we note that if $F=\emptyset$, then $F$ is not multiplicated by definition, and $\mathring{Y}(A, 0)^F$ is the open smooth stratum in $Y_A(0)$. So we only have to show the claim for nonempty flats. We prove the claim by induction on $\dim Y_A(0)=2(n-d)$. When $2(n-d)=2$, we can assume that $B$ is the $n\times 1$-matrix $(1, 1, \cdots, 1)^T$ (cf.\ Example \ref{ex:Atypesurface}). Then, there is the only one nonempty flat $F=[n]$ corresponding to the stratum $0\in Y_A(0)=S_{A_{n-1}}$. Now, the claim is clear. 

When $2(n-d)\geq4$, for any nonempty flat $F$, we will show that the slice $Y_{A_F}(0)$ to $Y_A(0)^F$ is singular if and only if $F$ is multiplicated. Note that $Y_{A_F}(0)$ is the hypertoric variety associated to the bottom horizontal exact sequence below:    
\[\begin{tikzcd}0\ar[r]&\Z^{n-d}\ar[r, "B"]\ar[d, two heads]&\Z^n\ar[r, "A"]\ar[d, two heads]&\Z^d\ar[r]\ar[d, two heads]&0\\
0\ar[r]&\Z^{n-d}/H_F\ar[r, "{B}_F"]&\Z^{|F|}\ar[r, "{A}_F"]&\Z^d/\sum_{i \notin F}\Z\bm{a_i}\ar[r]&0
\end{tikzcd}.\]
Now, one can easily check that $B_F$ doesn't have any zero row vectors. By definition, we have $\rank F=\codim H_F=\rank (\Z^{n-d}/H_F)$. Then by the induction hypothesis and the latter part of the claim, $Y_{A_F}(0)$ is singular if and only if $|F|\geq \rank (\Z^{n-d}/H_F) +1=\rank F+1$. This completes the proof.  

(2) \ This follows from (1) and Theorem \ref{thm:PW}. 
\end{proof}

\begin{remark}
One can see that $Y_A(0)$ has only isolated singularities if and only if $B$ is equivalent to the one of the following matrices: 
\[\begin{pmatrix}1\\1\\\vdots\\1\end{pmatrix} \ \text{or} \ \begin{pmatrix}1&&0\\&\ddots&\\0&&1\\-1&\cdots&-1\end{pmatrix}.\]
(\underline{Proof} : First, note that if there is a multiplicated flat $F$, then any subflat of $F$ is also a multiplicated flat. Thus, by Corollary \ref{cor:singularlocus} (2), $Y_A(0)$ has only isolated singularities if and only if every flat of $\rank \ (n-d-1)$ is not multiplicated, i.e., any $n-d$ subset $\bm{b_{i_1}}, \ldots, \bm{b_{i_{n-d}}}$ of row vectors of $B$ will give a basis. In terms of matroids, this condition is equivalent to the associated matroid $M(B^T)$ is isomorphic to the uniform matroid $U_{n-d, n}$ (for the basics of matroid theory, see \cite{Ox}). Note that $U_{n-d, n}$ is realized as the associated matroid of a unimodular matrix if and only if $(n-d, n)=(0, n), (1, n), (n-1, n),$ or $(n, n)$ (cf.\ \cite[p.660]{Ox}). When $(n-d, n)=(0, n)$ or $(n, n)$, we can take $B$ as a zero matrix or an identity matrix respectively. Since we don't allow $B$ to contain any zero column vectors, we can ignore the first case. Also, if $B$ is an identity matrix, then $Y_A(0)=\C^{2n}$, in particular, smooth. When $(n-d, n)=(1, n)$ or $(n-1, n)$, we can take $B$ as the claim. In this case, $Y_A(0)= S_{A_{n-1}}$ or $Y_A(0)= \overline{\calO}^{\min}_{A_{n-1}}$ respectively (cf.\ Example \ref{ex:Atypesurface} and \ref{ex:minnilpotent}), and these have an only one singular point $0$.)  
\end{remark}


\section{The universal covers of hypertoric varieties}\label{sec:universal}

In this section, we consider the relation between a given hypertoric variety $Y_A(0)$ and the hypertoric variety $Y_{\underline{A}}(0)$ associated to the simplification $\calH_{\overline{B}}^0$ of the arrangement $\calH_B^0$. More precisely, we will prove that $Y_{\underline{A}}(0)$ gives the universal cover of $Y_A(0)$ and $Y_A(0)$ is a finite group quotient of $Y_{\underline{A}}(0)$ (cf. Theorem \ref{thm:fundamentalgrp}). In the next section, we will give a concrete description of $\pi_1(Y_A(0)_{\reg})$ (cf.\ Proposition \ref{prop:explicit}). 
\vspace{2pt}

First, we can assume {\setlength\arraycolsep{1pt}$B^T\hspace{-5pt}=$\raise1ex\hbox{$\begin{array}{rcccccccl}
&\multicolumn{3}{c}{\smash{\overbrace{\hspace{6ex}}^{\ell_1}}}&&\multicolumn{3}{c}{\smash{\overbrace{\hspace{6ex}}^{\ell_s}}}&\\
\ldelim[{1}{3pt}[]&\bm{b^{(1)}}&\cdots&\bm{b^{(1)}}&\cdots&\bm{b^{(s)}}&\cdots&\bm{b^{(s)}}&\rdelim]{1}{0.1pt}[]\\
\end{array}$} \ , where if $k_1 \neq k_2$, then $\bm{b^{(k_1)}}\neq\pm\bm{b^{(k_2)}}$ (cf.\ Definition \ref{def:operation}). We denote by $[n]=F_1\sqcup\cdots\sqcup F_s$ the corresponding partition of indices of parallel row vectors of $B$. Then, we consider the {\it simplification} $\overline{B}^T:=[\bm{b^{(1)}}, \ldots,  \bm{b^{(s)}}]$} of $B^T$ (cf.\ Definition \ref{def:simple}), and take $\underline{A}=[\bm{\underline{a}_1}, \ldots, \bm{\underline{a}_s}]$ satisfying the exact sequence $\begin{tikzcd}0\ar[r]&\Z^{n-d}\ar[r, "\overline{B}"]&\Z^s\ar[r, "\underline{A}"]&\Z^{d-(n-s)}\ar[r]&0\end{tikzcd}$. Then, we consider two matrices $A_0:=A_{\ell_1-1}\oplus\cdots\oplus A_{\ell_s-1}$ and $B_0:=B_{\ell_1-1}\oplus\cdots\oplus B_{\ell_s-1}$ such that  
\[A_{\ell_i-1}:=\begin{pmatrix} 1& & 0&-1\\ &\ddots&&\vdots\\ 0&&1&-1\\\end{pmatrix}, \ \ \ \ \ \ B_{\ell_i-1}:=\begin{pmatrix}1\\1\\\vdots\\1\end{pmatrix},\]
where $A_{\ell_i-1}$ is a $(\ell_i-1)\times\ell_i$-matrix，and $B_{\ell_i-1}$ is a $\ell_i\times1$-matrix (cf. Example \ref{ex:Atypesurface})．Then, we can construct a unique commutative diagram and the diagram of the corresponding algebraic tori as the following:   

\begin{equation}\tag{*}
\begin{tikzcd}
0\ar[r]&\Z^{n-d}\ar[r, "\overline{B}"]\ar[d, equal]&\Z^{s} \ar[r, "\underline{A}" ]\ar[d, hook, "B_0"]&\Z^{d-(n-s)}\ar[r]\ar[d, hook, "i"]&0\\
0\ar[r]&\Z^{n-d}\ar[r, "B"]\ar[d, two heads]&\Z^n\ar[r, "A"]\ar[d, two heads, "A_0"]&\Z^d\ar[r]\ar[d, two heads, "p"]&0\\
0\ar[r]&0\ar[r]&\Z^{n-s}\ar[r, equal]&\Z^{n-s}\ar[r]&0
\end{tikzcd}, \ \ \ 
\begin{tikzcd}
\T_\C^s&\T_\C^{d-(n-s)}\ar[l, hook', "\underline{A}^T"']\\
\T_\C^n\ar[u, two heads, "B_0^T"']&\T_\C^d\ar[l, hook', "{A}^T"']\ar[u, two heads, "i^*"']\\
\T_\C^{n-s}\ar[u, hook', "A_0^T"']\ar[r, equal]&\T_\C^{n-s}\ar[u, hook', "p^*"']
\end{tikzcd}.
\end{equation}
In particular, $i(\bm{\underline{a}_k})=\sum_{j\in F_k}{\bm{a_j}}$ will hold. We called the affine hypertoric variety $Y_{\underline{A}}(0)$ the {\it simplification} of $Y_A(0)$ (cf.\ Definition \ref{def:simple}). 
To state the result, we consider the following embeddings: 
\[\begin{tikzcd}\Gamma:=\prod_{k=1}^s{\Z/\ell_k\Z}\ar[r, hook]& \T_\C^s& \T_\C^{d-(n-s)} \ar[l, hook', "\underline{A}^T"'] \end{tikzcd},\]
where we consider $\Gamma$ as a multiplicative subgroup of $\T_\C^s$ in the usual way. 
Then, the next theorem is the main result in this section.

\begin{theorem}{\rm (The universal cover is given by the simplification)}\label{thm:fundamentalgrp}\\
In the above setting, for $Y_A(0)=\C^{2n}/\hspace{-3pt}/\hspace{-3pt}/\T_\C^d$ and its simplification $Y_{\underline{A}}(0)=\C^{2s}/\hspace{-3pt}/\hspace{-3pt}/\T_\C^{d-(n-s)}$, we have the following {$\Cstar$-equivariant} commutative diagram:  
\[\begin{tikzcd}
Y_{\underline{A}}(0)\ar[r, "\vphi"]\ar[d]&Y_A(0)\\
Y_{\underline{A}}(0)/(\Gamma/\Gamma\cap\T_\C^{d-(n-s)})\ar[ur, "\cong"']
\end{tikzcd}.\]

Moreover, {$\vphi$ preserves symplectic structures,} 
and $\Gamma/\Gamma\cap\T_\C^{d-(n-s)}$ acts on $\vphi^{-1}(Y_A(0)_{\reg})$ freely. In particular, we have 
\[\pi_1(Y_A(0)_{\reg})\cong\Gamma/\Gamma\cap\T_\C^{d-(n-s)}.\]
\end{theorem}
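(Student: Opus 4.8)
The plan is to realize both hypertoric varieties as GIT quotients of affine spaces, and to build the covering map directly from the embedding of tori appearing in diagram $(*)$. Concretely, I would first construct a $\Cstar$-equivariant morphism $\psi: \C^{2s} \to \C^{2n}$ which intertwines the torus actions. The natural candidate is the map induced by $B_0^T: \T_\C^n \twoheadrightarrow \T_\C^s$ and $A_0: \Z^n \twoheadrightarrow \Z^{n-s}$, namely sending $(\bm z', \bm w') \in \C^{2s}$ to the point of $\C^{2n}$ whose $j$-th coordinate (for $j \in F_k$) is built from $(z'_k, w'_k)$ — for instance setting all but one $z_j$ (resp. $w_j$) in each block $F_k$ equal to $1$ and the remaining one equal to $z'_k$ (resp. $w'_k$), much as in the isomorphism $S_{A_{\ell-1}} \cong Y_{A_{\ell-1}}(0)$ of Example \ref{ex:Atypesurface}. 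One checks $\mu_A \circ \psi$ lands in $0$ using $i(\bm{\underline a}_k) = \sum_{j\in F_k}\bm a_j$, and that $\psi$ is equivariant for $\T_\C^{d-(n-s)} \hookrightarrow \T_\C^n$ via $A^T \circ \underline A^T = B_0^T{}^{-1}(\cdots)$, hence descends to $\vphi: Y_{\underline A}(0) \to Y_A(0)$. The $\Cstar$-equivariance is immediate from the weight-$(-1)$ scaling on all coordinates, and since the standard symplectic form on $\C^{2s}$ pulls back (after the quotient) to the standard one on $\C^{2n}$ restricted appropriately, $\vphi$ preserves symplectic structures; here I would invoke the normality of $Y_{\underline A}(0)$ and the fact that $\vphi$ is an isomorphism over the open dense locus, so a $2$-form identity on a dense open extends.

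Next I would identify the deck group. The group $\Gamma = \prod_k \Z/\ell_k\Z$ embeds in $\T_\C^s$ as the product of the kernels of the $\ell_k$-th power maps, and it acts on $\C^{2s}$ (hence on $Y_{\underline A}(0)$) through $\T_\C^s \curvearrowright \C^{2s}$; but the subgroup $\Gamma \cap \T_\C^{d-(n-s)}$ acts trivially on $Y_{\underline A}(0)$ because $\T_\C^{d-(n-s)}$ is exactly the torus being quotiented out in forming $Y_{\underline A}(0) = \C^{2s}/\!/\!/\T_\C^{d-(n-s)}$. So the effective action is by $\Gamma/(\Gamma \cap \T_\C^{d-(n-s)})$, and one verifies $\vphi$ is invariant for it — again using $B_0^T$ kills $\Gamma$, i.e. $B_0^T(\Gamma) = \{1\}$ in $\T_\C^n$, which is the statement that the big block tori are killed by the simplification. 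This gives the commutative triangle with $Y_{\underline A}(0)/(\Gamma/\Gamma\cap\T_\C^{d-(n-s)}) \to Y_A(0)$; I would then argue this induced map is an isomorphism by comparing coordinate rings: $\C[Y_A(0)] = \C[\mu_A^{-1}(0)]^{\T_\C^d}$ and $\C[Y_{\underline A}(0)]^{\Gamma} = (\C[\mu_{\underline A}^{-1}(0)]^{\T_\C^{d-(n-s)}})^{\Gamma}$, and matching generators $z_j w_j$, $\prod_{j \in F_k} z_j$, $\prod_{j\in F_k} w_j$ with their invariant counterparts, exactly as in Examples \ref{ex:Atypesurface}--\ref{ex:Ominlll}.

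Then I would establish freeness of the $\Gamma/(\Gamma\cap\T_\C^{d-(n-s)})$-action on $\vphi^{-1}(Y_A(0)_{\reg})$. Using the description of $Y_A(0)_{\reg} \subseteq Y_A(0) - \overline{\Sigma_{\codim 2}}$ from Corollary \ref{cor:codim2sing}(2), a point in the preimage has, for each block $k$ with $\ell_k \geq 2$, either $\prod_{j\in F_k} z_j \neq 0$ or $\prod_{j\in F_k} w_j \neq 0$, which in the $\C^{2s}$-coordinates forces $z'_k \neq 0$ or $w'_k \neq 0$. A generator of the $\Z/\ell_k\Z$-factor of $\Gamma$ acts on $(z'_k, w'_k)$ by $(\zeta z'_k, \zeta^{-1} w'_k)$ with $\zeta$ a primitive $\ell_k$-th root of unity, so it fixes such a point only if it acts trivially — and being nontrivial on that factor, it does not, unless it lies in $\Gamma \cap \T_\C^{d-(n-s)}$. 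Making this precise requires knowing that the only relations among $\Gamma$-elements acting trivially on $Y_{\underline A}(0)_{\reg}$ are those coming from $\Gamma \cap \T_\C^{d-(n-s)}$, which is where the unimodularity/Gale-duality bookkeeping enters. Finally, with $\vphi^{-1}(Y_A(0)_{\reg}) \to Y_A(0)_{\reg}$ an étale Galois cover of the source, which is simply connected by Corollary \ref{cor:hypertoric trivial} applied to the simple variety $Y_{\underline A}(0)$, the identification $\pi_1(Y_A(0)_{\reg}) \cong \Gamma/(\Gamma\cap\T_\C^{d-(n-s)})$ follows from covering space theory.

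The main obstacle I anticipate is the freeness step: translating "the orbit is closed and avoids the codimension-$2$ locus" into a coordinate statement strong enough to rule out fixed points of every nontrivial element of $\Gamma/(\Gamma\cap\T_\C^{d-(n-s)})$, not just generators. One must handle elements that are nontrivial overall but act trivially on each individual block where $\ell_k \geq 2$ is "activated" — and show such elements actually lie in $\Gamma \cap \T_\C^{d-(n-s)}$, i.e. that $\ker(\Gamma \to \Aut(Y_{\underline A}(0)_{\reg}))$ is exactly $\Gamma \cap \T_\C^{d-(n-s)}$. This is a linear-algebra argument over $\Z/\ell_k\Z$ using that $\overline B$ is unimodular and the precise form of diagram $(*)$, and it is the technical heart of the theorem; everything else is bookkeeping with GIT quotients and invariant rings.
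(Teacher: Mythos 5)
Your overall architecture is the right one and, at the level of actual computation, coincides with the paper's: the covering map is the one dual to the ring inclusion sending $u_k=\ell_k z_jw_j\mapsto\underline{z}_k\underline{w}_k$, $\prod_{j\in F_k}\sqrt{\ell_k}\,z_j\mapsto\underline{z}_k^{\ell_k}$, $\prod_{j\in F_k}\sqrt{\ell_k}\,w_j\mapsto\underline{w}_k^{\ell_k}$; the freeness is extracted from Corollary \ref{cor:codim2sing}(2); and the $\pi_1$ statement follows from covering-space theory once the source is known to be simply connected. The paper packages the construction differently — it proves a general ``symplectic reduction by two stages'' lemma and factors $Y_A(0)\cong(\C^{2n}/\!/\!/\T_\C^{n-s})/\!/\!/\T_\C^{d-(n-s)}\cong(\C^{2s}/\Gamma)/\!/\!/T_\Gamma\cong Y_{\underline{A}}(0)/(\Gamma/\Gamma\cap\T_\C^{d-(n-s)})$ — which buys automatic compatibility of the Poisson/symplectic structures and of the residual torus actions, at the cost of a careful equivariance check for the identification $T_\Gamma\cong\T_\C^{d-(n-s)}$ via $\psi_\ell$. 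Your direct approach is viable but must do those verifications by hand.

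Two concrete problems in your write-up. First, your candidate lift $\psi:\C^{2s}\to\C^{2n}$ (``all but one $z_j$ and $w_j$ in each block equal to $1$'') does not land in $\mu_A^{-1}(0)$: membership in $\mu_A^{-1}(0)$ forces $z_jw_j$ to be constant on each block $F_k$ (since $(z_jw_j)_j\in\Image B$ and the rows $\bm{b_j}$, $j\in F_k$, coincide), so your point would need $z'_kw'_k=1$. The correct lift is the block-diagonal one $(z'_k,w'_k)\mapsto(z'_k,\dots,z'_k,w'_k,\dots,w'_k)$, and even then $\psi^*\bigl(\sum_j dz_j\wedge dw_j\bigr)=\sum_k\ell_k\,dz'_k\wedge dw'_k$, so the claim that ``$\vphi$ preserves symplectic structures'' fails without the $\sqrt{\ell_k}$ rescaling of coordinates that the paper builds into the isomorphism of Example \ref{ex:Atypesurface}; your density/normality argument cannot repair a wrong constant. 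Second, for the final isomorphism $\pi_1(Y_A(0)_{\reg})\cong\Gamma/\Gamma\cap\T_\C^{d-(n-s)}$ you need the \emph{total space} $\vphi^{-1}(Y_A(0)_{\reg})$ to be simply connected, not just $Y_{\underline{A}}(0)_{\reg}$; this requires the observation (Remark \ref{rem:phicodim}) that the complement of $\vphi^{-1}(Y_A(0)_{\reg})$ in $Y_{\underline{A}}(0)_{\reg}$ has codimension at least $2$. Your worry about the freeness step is legitimate but resolvable exactly as you suspect: an element of $\Gamma$ fixing a point with closed orbit differs from an element of the stabilizer by an element of $\T_\C^{d-(n-s)}$, and the coordinate constraints from Corollary \ref{cor:codim2sing}(2) (one of $\underline{z}_k,\underline{w}_k$ nonzero whenever $\ell_k\geq2$, while the $\Gamma$-factor is already trivial when $\ell_k=1$) force its class in $\Gamma/\Gamma\cap\T_\C^{d-(n-s)}$ to be trivial.
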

This theorem gives complete answers for Problem \ref{problem:fundamental} and Problem \ref{problem:universalcover}. Moreover, this theorem says that taking the universal cover of $Y_A(0)$ in the geometric side corresponds to taking the simplification of the hyperplane arrangement $\calH_B^0$ in the combinatorial side as the following. 
\begin{figure}[h]
\begin{center}
\includegraphics[width=200pt, height=50pt]{picture_simplification_paper_2-crop.pdf}
\end{center}
\end{figure}

\begin{remark}\label{rem:universal}\hspace{2pt}\\
\vspace{-5mm}
\begin{itemize}
\item[(1)] 
In Definition \& Proposition \ref{def and prop:universal cover}, we replace the original $\Cstar$-action $\sigma_Y$ by $\sigma_Y^m$ to lift this action to the universal cover $\overline{Y}$. However, in the hypertoric setting, the $\Cstar$-action on $Y_A(0)$ already lift to $Y_{\underline{A}}(0)$, so we don't have to replace by a new $\Cstar$-action. 
\item[(2)] In \cite[{Theorem 4.2}]{Nag}, we showed that the isomorphism classes of affine hypertoric varieties $Y_A(0)$ as conical symplectic varieties are bijective to the isomorphism classes of the associated matroids $M(B^T)$ (or $M(A)$). In particular, as conical symplectic varieties, 
\[Y_A(0)\cong Y_{A'}(0) \ \Rightarrow \ Y_{\underline{A}}(0)\cong Y_{\underline{A'}}(0)\]
does hold since $M(\overline{B}^T)$ is isomorphic to the simplification of $M(B^T)$ which is determined uniquely up to isomorphisms (cf.\ \cite[{Corollary 4.5}]{Nag}). From this viewpoint, the above theorem can be seen as a geometric interpretation of this combinatorial implication. 
\item[(3)]
In general, not necessarily every finite covering of $Y_A(0)$ is given by another hypertoric variety. For example, if $Y_A(0)=S_{A_1}\times S_{A_1}=\C^2\times\C^2/(\Z_2\times\Z_2)$, then it is well-known that the degree 2 cover $\C^4/\langle 1, 1\rangle$ doesn't admit any symplectic resolution. Thus, this cannot be any hypertoric variety. 
\end{itemize}
\end{remark}

\textbf{The idea of the proof.} $Y_A(0)$ is defined as the symplectic reduction $\C^{2n}/\hspace{-3pt}/\hspace{-3pt}/\T_\C^d$ of $\C^{2n}$ by a torus embedding $\T_\C^d\overset{A^T}{\hookrightarrow}\T_\C^n$. On the other hand, one can take a ``symplectic reduction by two stages'' (for the detail, see Definition \& Lemma \ref{def:twostages}) as the following: 
\[Y_A(0)=\C^{2n}/\hspace{-3pt}/\hspace{-3pt}/\T_\C^d\cong\left(\C^{2n}/\hspace{-3pt}/\hspace{-3pt}/\T_\C^{n-s}\right)/\hspace{-3pt}/\hspace{-3pt}/\T_\C^{d-(n-s)},\]
where $\T_\C^{d-(n-s)} \hspace{-2pt} :=\T_\C^d/\T_\C^{n-s}$, and the action $\T_\C^{n-s} \hspace{-2pt} = \hspace{-2pt} \prod_{k=1}^{s}{\T_\C^{\ell_k-1}}\curvearrowright\C^{2n} \hspace{-2pt} = \hspace{-2pt} \prod_{k=1}^s{\C^{2\ell_k}}$ is the same as the product of the one of Example \ref{ex:Atypesurface} ($A$-type surface singularity). Next, one can show that
\[\C^{2n}/\hspace{-3pt}/\hspace{-3pt}/\T_\C^{n-s}\cong\C^{2s}/\Gamma\]
is a ``torus'' equivariant isomorphism as conical symplectic varieties with respective to the torus identification  $\T_\C^{d-(n-s)}\cong \T_\C^{d-(n-s)}/\Gamma\cap \T_\C^{d-(n-s)}=:T_\Gamma$ which is induced from 
\[\begin{tikzcd}
\T_\C^s\ar[d, two heads, "\psi_\ell"]&\T_\C^{d-(n-s)}\ar[l, hook', "\underline{A}^T"']\ar[r, two heads]\ar[d, two heads, "\psi_\ell"]&T_\Gamma:=\T_\C^{d-(n-s)}/\Gamma\cap\T_\C^{d-(n-s)}\ar[dl, "\sim"]\\
\T_\C^s&\T_\C^{d-(n-s)}\ar[l, hook', "\underline{A}^T"']
\end{tikzcd},\]
where $\psi_\ell : \T_\C^s \to \T_\C^s$ is defined as $(t_1, \ldots, t_s)\mapsto (t_1^{\ell_1}, \ldots, t_s^{\ell_s})$. 
Hence, we have natural isomorphisms
\[Y_A(0) \cong \left(\C^{2n}/\hspace{-3pt}/\hspace{-3pt}/\T_\C^{n-s}\right)/\hspace{-3pt}/\hspace{-3pt}/\T_\C^{d-(n-s)} \cong\left(\C^{2s}/\Gamma\right)/\hspace{-3pt}/\hspace{-3pt}/\left(\T_\C^{d-(n-s)}/\Gamma\cap \T_\C^{d-(n-s)}\right)=\left(\C^{2s}/\Gamma\right)/\hspace{-3pt}/\hspace{-3pt}/T_\Gamma.\]
Finally, we can show the following natural isomorphism which also can be interpreted as the special case of a symplectic reduction by two stages: 
\[\left(\C^{2s}/\Gamma\right)/\hspace{-3pt}/\hspace{-3pt}/T_\Gamma\cong \left(\C^{2s}/\hspace{-3pt}/\hspace{-3pt}/\T_\C^{d-(n-s)}\right)\bigg/\left(\Gamma/\Gamma\cap \T_\C^{d-(n-s)}\right)=Y_{\overline{A}}(0)/\left(\Gamma/\Gamma\cap \T_\C^{d-(n-s)}\right).\] 
By combining these isomorphisms, we obtain the desired isomorphism. 

\begin{example}(The case of $\overline{\calO}^{\min}(\ell_1, \ell_2, \ell_3)$ cf. Example \ref{ex:Ominlll})\\
We demonstrate the above idea in the case of $\overline{\calO}^{\min}(\ell_1, \ell_2, \ell_3)$. First, we note that $\overline{\calO}^{\min}_{A_2}=\C^{2}\times\C^{2}\times\C^{2}/\hspace{-3pt}/\hspace{-3pt}/\T_\C^{1}$, and $\Gamma=\Z_{\ell_1}\times\Z_{\ell_2}\times\Z_{\ell_3}$.  Then, we have the following isomorphisms. 
\begin{align*}
\overline{\calO}^{\min}(\ell_1, \ell_2, \ell_3)&= \C^{2\ell_1}\times\C^{2\ell_2}\times\C^{2\ell_3}/\hspace{-3pt}/\hspace{-3pt}/\T_\C^{\ell_1+\ell_2+\ell_3-2}\\
&\cong\left(\C^{2\ell_1}\times\C^{2\ell_2}\times\C^{2\ell_3}/\hspace{-3pt}/\hspace{-3pt}/\T_\C^{\ell_1-1}\times\T_\C^{\ell_2-1}\times\T_\C^{\ell_3-1}\right)/\hspace{-3pt}/\hspace{-3pt}/\T_\C^{1}\\
&\cong \left(\C^2\times \C^2\times \C^2\right/\Gamma)/\hspace{-3pt}/\hspace{-3pt}/\left(\T_\C^{1}/\T_\C^1\cap \Gamma\right)\\
&\cong \left(\C^{2}\times\C^{2}\times\C^{2}/\hspace{-3pt}/\hspace{-3pt}/\T_\C^{1} \right)/\left(\Gamma/\Gamma\cap\T_\C^1 \right)\\
&\cong \overline{\calO}^{\min}_{A_2}/\left(\Gamma/\Gamma\cap\T_\C^1 \right),
\end{align*}
where $\begin{tikzcd}\Gamma\ar[r, hook]& \T_\C^3& \T_\C^{1} \ar[l, hook', "{\begin{pmatrix}t\\ t\\ t\\ \end{pmatrix}}"'] \end{tikzcd}$. We can actually see these isomorphisms through the following natural embedding of coordinate rings. 
\[\begin{tikzcd}
\C[\overline{\calO}^{\min}(\ell_1, \ell_2, \ell_3)]=\C[z_1^{\ell_1}, w_1^{\ell_1}, z_1w_1, z_2^{\ell_2}, w_2^{\ell_2}, z_2w_2, z_3^{\ell_3}, w_3^{\ell_3}, z_3w_3]^{\T_\C^1}/\langle z_1w_1+z_2w_2+z_3w_3\rangle\ar[d, hook]\\
\C[\overline{\calO}^{\min}_{A_2}]=\C[z_1, w_1, z_2, w_2, z_3, w_3]^{\T_\C^1}/\langle z_1w_1+z_2w_2+z_3w_3\rangle
\end{tikzcd},\]
where $\T_\C^1$-action is $z_i\mapsto tz_i$, and $w_i \mapsto t^{-1}w_i$. 
\end{example}

Now, we give a more precise statement of the above idea and observation. 
\begin{proposition}\label{prop:bystage}
In the above setting, there exists the following natural commutative diagram of conical symplectic varieties, and $f_i \ (i=1, 2, 3)$ are isomorphisms as conical symplectic varieties.   
\[\begin{tikzcd}
Y_{\underline{A}}(0)=\C^{2s}/\hspace{-3pt}/\hspace{-3pt}/\T_\C^{d-(n-s)}\ar[dr]\ar[d]&&&\\
Y_{\underline{A}}(0)/(\Gamma/\Gamma\cap\T_\C^{d-(n-s)})\ar[r, "f_1", "\sim"']&(\C^{2s}/\Gamma)/\hspace{-3pt}/\hspace{-3pt}/T_\Gamma\ar[r ,"f_2", "\sim"']&(\C^{2n}/\hspace{-3pt}/\hspace{-3pt}/\T_\C^{n-s})/\hspace{-3pt}/\hspace{-3pt}/\T_\C^{d-(n-s)}\ar[r, "f_3", "\sim"']&Y_A(0):=\C^{2n}/\hspace{-3pt}/\hspace{-3pt}/\T_\C^d
\end{tikzcd},
\]
where $T_\Gamma:=\T_\C^{d-(n-s)}/\Gamma\cap\T_\C^{d-(n-s)}$. 
\end{proposition}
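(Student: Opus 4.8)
The plan is to break the diagram into the three claimed isomorphisms $f_1, f_2, f_3$, each of which is an instance of one of two elementary principles: \emph{symplectic reduction in two stages}, and the identification of $\C^{2n}/\hspace{-3pt}/\hspace{-3pt}/\T_\C^{n-s}$ with $\C^{2s}/\Gamma$ coming from the $A$-type surface singularity computation of Example \ref{ex:Atypesurface}. I would establish these in the order $f_3$, then $f_2$, then $f_1$, since that is the logical order of the reduction: first split off the finite abelian factor, then pass through the quotient, then recognize the last stage.

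First, $f_3$. Using the top-left commutative square of the diagram $(*)$, the torus embedding $\T_\C^d \overset{A^T}{\hookrightarrow} \T_\C^n$ factors (on Lie algebra / character-lattice level) through the exact sequence $1 \to \T_\C^{n-s} \xrightarrow{A_0^T} \T_\C^d \to \T_\C^{d-(n-s)} \to 1$ where $\T_\C^{n-s} = \prod_{k=1}^s \T_\C^{\ell_k-1}$ acts on $\C^{2n} = \prod_{k=1}^s \C^{2\ell_k}$ exactly as the product of the Hamiltonian actions defining the $A_{\ell_k-1}$-surface singularities. I would invoke (or prove, as Definition \& Lemma \ref{def:twostages}) the standard fact that GIT/symplectic reduction by a subtorus followed by reduction by the quotient torus agrees with reduction by the whole torus; this is a routine check on invariant functions, since $\C[\mu_A^{-1}(0)]^{\T_\C^d} = (\C[\mu_A^{-1}(0)]^{\T_\C^{n-s}})^{\T_\C^{d-(n-s)}}$ and the moment map for the residual $\T_\C^{d-(n-s)}$-action is induced by $\mu_A$ via $p : \Z^d \twoheadrightarrow \Z^{n-s}$. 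One must check this is $\Cstar$-equivariant and symplectic, but the $\Cstar$-action descends from the scalar action on $\C^{2n}$ at every stage, and the symplectic form on each reduction is the one induced from $\om_\C$, so compatibility is automatic.

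Next, $f_2$. The reduction $\C^{2n}/\hspace{-3pt}/\hspace{-3pt}/\T_\C^{n-s} = \prod_k (\C^{2\ell_k}/\hspace{-3pt}/\hspace{-3pt}/\T_\C^{\ell_k-1})$ is, by Example \ref{ex:Atypesurface}, isomorphic as a conical symplectic variety to $\prod_k (\C^2/\Z_{\ell_k}) = \C^{2s}/\Gamma$; the key point is to track how the \emph{residual} $\T_\C^{d-(n-s)}$-action translates under this isomorphism. In Example \ref{ex:Atypesurface} the isomorphism sends $\ell_k z_{j}w_{j} \mapsto u_k$, $\prod_{j \in F_k}\sqrt{\ell_k}\,z_j \mapsto x_k$, $\prod_{j\in F_k}\sqrt{\ell_k}\,w_j \mapsto y_k$, and on $\C^2/\Z_{\ell_k}$ the coordinates $u_k, x_k, y_k$ are $\zeta_k w_k, z_k^{\ell_k}, w_k^{\ell_k}$ (up to the $\Z_{\ell_k}$-quotient); so the $k$-th factor $\T_\C^1 \ni t$ of the natural torus acting on $\C^2/\Z_{\ell_k}$ corresponds to $t^{\ell_k}$ acting on the original. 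This is precisely the map $\psi_\ell : (t_1, \ldots, t_s) \mapsto (t_1^{\ell_1}, \ldots, t_s^{\ell_s})$ in the displayed commutative square, whose kernel is $\Gamma$. Hence the residual torus $\T_\C^{d-(n-s)} \overset{\underline{A}^T}{\hookrightarrow} \T_\C^s$ (acting on the "downstairs" $\C^{2s}$) gets identified with $\T_\C^{d-(n-s)}/(\Gamma \cap \T_\C^{d-(n-s)}) = T_\Gamma$ acting on $\C^{2s}/\Gamma$, and $f_2$ follows by functoriality of GIT quotients. I would phrase this carefully at the level of coordinate rings to avoid sign/root-of-unity confusion.

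Finally, $f_1$: the isomorphism $(\C^{2s}/\Gamma)/\hspace{-3pt}/\hspace{-3pt}/T_\Gamma \cong (\C^{2s}/\hspace{-3pt}/\hspace{-3pt}/\T_\C^{d-(n-s)})/(\Gamma/\Gamma\cap\T_\C^{d-(n-s)})$ is another two-stage reduction, now for the (possibly disconnected) group $\Gamma \cdot \T_\C^{d-(n-s)} \subset \T_\C^s$: reducing $\C^{2s}$ by this group can be done by first quotienting by $\Gamma$ then reducing by $T_\Gamma = (\Gamma\cdot\T_\C^{d-(n-s)})/\Gamma$, or by first reducing by the connected torus $\T_\C^{d-(n-s)}$ (giving $Y_{\underline A}(0)$) and then quotienting by the residual finite group $\Gamma/(\Gamma\cap\T_\C^{d-(n-s)})$. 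Again this is an identification of invariant rings; the $\Cstar$- and symplectic-structure compatibility is inherited. Composing $f_3 \circ f_2 \circ f_1$ with the quotient map $Y_{\underline A}(0) \to Y_{\underline A}(0)/(\Gamma/\Gamma\cap\T_\C^{d-(n-s)})$ gives the morphism $\vphi$ and the commutativity of the whole diagram. \textbf{The main obstacle} I anticipate is purely bookkeeping: correctly matching the residual $\T_\C^{d-(n-s)}$-action across the isomorphism of Example \ref{ex:Atypesurface}, i.e. verifying that $\psi_\ell$ is exactly the discrepancy — getting the exponents $\ell_k$ (rather than, say, $1$ or $m_k$) and the embedding $\underline A^T$ right on the nose. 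Everything else is a standard two-stage-reduction verification.
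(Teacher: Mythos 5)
Your proposal is correct and follows essentially the same route as the paper: $f_1$ and $f_3$ via the two-stage reduction lemma (Definition \& Lemma \ref{def:twostages} / Corollary \ref{cor:naturalHam}), and $f_2$ by tracking the residual $\T_\C^{d-(n-s)}$-action through the $A$-type surface-singularity isomorphism of Example \ref{ex:Atypesurface} and identifying the discrepancy with $\psi_\ell$, plus the moment-map compatibility check. You have correctly isolated the one genuinely delicate point (the $\psi_\ell$-equivariance bookkeeping for $f_2$), which is exactly where the paper spends its effort.
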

To prove this, we should give a precise definition of the symplectic reduction by two stages. First, we fix a vector space $V$ and a maximal torus $T\subset GL(V)$, then we define a symplectic reduction of $V$ by a closed subgroup $H\subset T$ in general as follows. We define a moment map as the following:  
\[\mu_H : T^*V \to \gerh^*:=\Lie(H)^* : (\bm{z}, \bm{w}) \mapsto (a \mapsto \bm{w}(a\cdot \bm{z})).\]
Since $\mu_H$ is $T$-invariant, in particular $H$-invariant, we define the symplectic reduction $V/\hspace{-3pt}/\hspace{-3pt}/H$ by $\mu_H^{-1}(0)/\hspace{-3pt}/H:=\Spec\C[\mu_H^{-1}(0)]^H$. This has a natural Poisson structure induced from the standard one of $T^*V$. 
\begin{remark}
By definition, for any linear representation of a finite group $\Gamma\subset GL(V)$, the symplectic reduction is the same as the usual quotient, i.e., $T^*V/\hspace{-3pt}/\hspace{-3pt}/\Gamma=T^*V/\Gamma$.  
\end{remark}
Next, we define the symplectic reduction by two stages as the following. In real symplectic case, such operation is well studied (cf.\ \cite{MMOPR}), but we cannot find a reference for holomorphic (algebraic) symplectic case. 
\begin{definition and lemma}{\rm (Symplectic reduction by two stages in GIT setting)}\label{def:twostages}
In the above setting, we consider a closed subgroup $K$ of $H\subset T\subset GL(V)$. Then, the moment map $\mu_H$ induces a $H/K$-invariant morphism $\Phi_{H/K}: T^*V/\hspace{-3pt}/K \to \gerh^*$, and $\Phi_{H/K}$ induces a $H/K$-invariant morphism 
\[\mu_{H/K} : \mu_K^{-1}(0)/\hspace{-3pt}/K \to (\gerh/\gerk)^*\]
such that the following is commutative:  
\begin{equation*}\tag{*}
\begin{tikzcd}
T^*V/\hspace{-3pt}/K\ar[d, "\Phi_{H/K}"]&\mu_K^{-1}(0)/\hspace{-3pt}/K\ar[l, hook']\ar[d, "\mu_{H/K}"]\\
\gerh^*&(\gerh/\gerk)^*\ar[l, hook']
\end{tikzcd}.\end{equation*}
Then, we define $(T^*V/\hspace{-3pt}/\hspace{-3pt}/K)/\hspace{-3pt}/\hspace{-3pt}/(H/K):=\mu_{H/K}^{-1}(0)/\hspace{-3pt}/(H/K)$, and we call this a {\rm symplectic reduction by two stages} (with respect to $H$ and $K$). In this case, we have a natural $\Cstar\times (T/H)$-equivariant isomorphism as Poisson varieties: 
\[T^*V/\hspace{-3pt}/\hspace{-3pt}/H\cong (T^*V/\hspace{-3pt}/\hspace{-3pt}/K)/\hspace{-3pt}/\hspace{-3pt}/(H/K).\]

\end{definition and lemma}
\begin{proof}
Since $\mu_H : T^*V \to \gerh^*$ is $K$-invariant, $\mu_H$ induces a $H/K$-invariant morphism $\Phi_{H/K} : T^*V/\hspace{-3pt}/\hspace{-3pt}/K \to \gerh^*$. Moreover, by definition, we have the following commutative diagram:  
\[\begin{tikzcd}
&T^*V\ar[dl, "\mu_K"]\ar[d, two heads, "\mu_H"]\\
\gerk^*&\gerh^*\ar[l, two heads, "i^*"]
\end{tikzcd}.\]
In particular, $\mu_H$ induces a morphism $\mu_K^{-1}(0) \to (\gerh/\gerk)^*$, and this is $H/K$-invariant. Thus, we obtain a morphism $\mu_{H/K} : \mu_K^{-1}(0)/\hspace{-3pt}/K \to (\gerh/\gerk)^*$ satisfying the diagram (*). 
Since we have a natural isomorphism $T^*V/\hspace{-3pt}/H \cong (T^*V/\hspace{-3pt}/K)/\hspace{-3pt}/(H/K)$ and (*) is commutative, we obtain the following diagram and $(T^*V/\hspace{-3pt}/\hspace{-3pt}/K)/\hspace{-3pt}/\hspace{-3pt}/(H/K):=\mu_{H/K}^{-1}(0)/\hspace{-3pt}/(H/K)$ is identified with the central fiber of $\overline{\mu}_H$, and $ \overline{\Phi}_{H/K}, \overline{\mu}_{H/K}$:  
\[\begin{tikzcd}
&T^*V/\hspace{-2pt}/H\ar[rr, "\sim"]\ar[ddl, two heads, "\overline{\mu}_H" near start]&&\left(T^*V/\hspace{-2pt}/K\right)/\hspace{-2pt}/(H/K)\ar[ddl, two heads, "\overline{\Phi}_{H/K}" near end]&(\mu_K^{-1}(0)/\hspace{-2pt}/K)/\hspace{-2pt}/(H/K)\ar[l, hook']\ar[ddl, two heads, "\overline{\mu}_{H/K}"]\\
T^*V\ar[rr, crossing over, two heads]\ar[ur, two heads]\ar[d, "\mu_H"']&&T^*V/\hspace{-2pt}/K\ar[ur, two heads]\ar[d, "\Phi_{H/K}"']&\mu_K^{-1}(0)/\hspace{-2pt}/K\ar[l, crossing over, hook']\ar[ur, two heads]\ar[d, "\mu_{H/K}"]&\\
\gerh^*&&\gerh^*\ar[ll, equal]&(\gerh/\gerk)^*\ar[l, hook']&
\end{tikzcd},\]
where $\overline{\mu}_H$, and $ \overline{\Phi}_{H/K}, \overline{\mu}_{H/K}$ are the induced morphism from $\mu_H$, and $\Phi_{H/K}$, $\mu_{H/K}$ respectively. This gives an isomorphism $T^*V/\hspace{-3pt}/\hspace{-3pt}/H\cong (T^*V/\hspace{-3pt}/\hspace{-3pt}/K)/\hspace{-3pt}/\hspace{-3pt}/(H/K)$. Also, one can easily check that this is $\Cstar$-equivariant and preserves the Poisson structures. 
\end{proof}

\begin{corollary}\label{cor:naturalHam}
Let $T\subset GL(V)$ be a maximal torus and consider the natural Hamiltonian action $T\curvearrowright T^*V$. Then for any closed (possibly non-connected) algebraic subgroup $K, H\subset T$, there exists natural $\Cstar\times (T/\langle H, K\rangle)$-equivariant isomorphisms as Poisson varieties: 
\[T^*V/\hspace{-3pt}/\hspace{-3pt}/\langle H, K\rangle\cong\left(T^*V/\hspace{-3pt}/\hspace{-3pt}/H\right)/\hspace{-3pt}/\hspace{-3pt}/(K/K\cap H)\cong \left(T^*V/\hspace{-3pt}/\hspace{-3pt}/K\right)/\hspace{-3pt}/\hspace{-3pt}/(H/H\cap K),\]
where $\langle H, K\rangle\subset T$ is the subgroup generated by $H$ and $K$．
\end{corollary}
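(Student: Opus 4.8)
The plan is to deduce this from the symplectic reduction by two stages of Definition \& Lemma~\ref{def:twostages}, applied to the chain of subgroups $H\subseteq\langle H,K\rangle\subseteq T$, after first checking that $\langle H,K\rangle$ is a well-behaved closed subgroup of $T$ and then re-expressing the residual group $\langle H,K\rangle/H$ in terms of $K$. The symmetric statement for $K$ then comes for free by interchanging the roles of $H$ and $K$.

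First I would record two elementary observations. Since $T$ is abelian, the product $HK$ is already a subgroup of $T$; moreover it is closed, being the image of the multiplication morphism $H\times K\to T$ of affine algebraic groups, so $\langle H,K\rangle=HK$ is a closed algebraic subgroup of $T$, hence a closed subgroup of $GL(V)$ containing $H$, and the hypotheses of Definition \& Lemma~\ref{def:twostages} are met for the chain $H\subseteq\langle H,K\rangle\subseteq T$. Secondly, the composite $K\hookrightarrow T\twoheadrightarrow T/H$ is a homomorphism of algebraic groups with image $KH/H=\langle H,K\rangle/H$ and kernel $K\cap H$; as we work over $\C$ it induces an isomorphism of algebraic groups
\[K/(K\cap H)\ \xrightarrow{\ \sim\ }\ \langle H,K\rangle/H\]
(for diagonalizable groups such as these the isomorphism theorems present no difficulty: a bijective homomorphism onto a closed, hence normal, subgroup is an isomorphism by Zariski's main theorem). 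In particular the image of $K$ in $T/H$ is exactly the subtorus $\langle H,K\rangle/H$, and the residual action of $\langle H,K\rangle/H$ on $T^*V/\hspace{-3pt}/\hspace{-3pt}/H$ occurring in Definition \& Lemma~\ref{def:twostages} is the restriction to this subtorus of the residual $(T/H)$-action, equivalently the $K$-action on $T^*V/\hspace{-3pt}/\hspace{-3pt}/H$ factored through $K/(K\cap H)$.

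With these in hand I would apply Definition \& Lemma~\ref{def:twostages} with its inner subgroup taken to be $H$ and its outer subgroup taken to be $\langle H,K\rangle$. This produces a $\Cstar\times\bigl(T/\langle H,K\rangle\bigr)$-equivariant isomorphism of Poisson varieties
\[T^*V/\hspace{-3pt}/\hspace{-3pt}/\langle H,K\rangle\ \cong\ \bigl(T^*V/\hspace{-3pt}/\hspace{-3pt}/H\bigr)/\hspace{-3pt}/\hspace{-3pt}/\bigl(\langle H,K\rangle/H\bigr)\ \cong\ \bigl(T^*V/\hspace{-3pt}/\hspace{-3pt}/H\bigr)/\hspace{-3pt}/\hspace{-3pt}/\bigl(K/(K\cap H)\bigr),\]
where the second isomorphism is the relabelling of the previous paragraph and the equivariance group is as stated because $T/\langle H,K\rangle=(T/H)/(\langle H,K\rangle/H)$. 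Interchanging $H$ and $K$, which is legitimate since $\langle H,K\rangle=\langle K,H\rangle$, yields in exactly the same way the isomorphism with $\bigl(T^*V/\hspace{-3pt}/\hspace{-3pt}/K\bigr)/\hspace{-3pt}/\hspace{-3pt}/\bigl(H/(H\cap K)\bigr)$, and composing these gives the chain of canonical isomorphisms in the statement.

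The only step that is not purely formal is the compatibility of the residual actions under the group isomorphism $K/(K\cap H)\cong\langle H,K\rangle/H$: one must check that reducing $T^*V/\hspace{-3pt}/\hspace{-3pt}/H$ by ``the image of $K$ in $T/H$'' is the same operation whether that image is regarded as the quotient $K/(K\cap H)$ or as a subtorus of $T/H$. This is immediate from the definition $V/\hspace{-3pt}/\hspace{-3pt}/G:=\mu_G^{-1}(0)/\hspace{-3pt}/G$, since the moment map for the subtorus of $T/H$ and the $H$-descended moment map for $K$ cut out the same closed subscheme of $T^*V/\hspace{-3pt}/\hspace{-3pt}/H$ and the $G$-invariants are taken with respect to the same action. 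I do not expect any other obstacle; everything else is a formal consequence of Definition \& Lemma~\ref{def:twostages} together with the isomorphism theorems for subgroups of a torus.
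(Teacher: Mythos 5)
Your proposal is correct and follows essentially the same route as the paper: apply Definition \& Lemma \ref{def:twostages} to the chain $H\subseteq\langle H,K\rangle\subseteq T$ and identify $\langle H,K\rangle/H\cong K/(K\cap H)$ (and symmetrically for $K$). The paper's proof is exactly this one-line reduction; you have merely supplied the routine verifications (closedness of $HK$ in the abelian group $T$, the isomorphism theorem for diagonalizable groups, and the compatibility of the residual actions) that the paper leaves implicit.
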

\begin{proof}
Since $\langle H, K\rangle/H\cong K/K\cap H$ and $\langle H, K\rangle/K \cong H/H\cap K$, we obtain the desired isomorphism from the above Definition \& Lemma. 
\end{proof}

\begin{proof}[\textbf{Proof of Proposition \ref{prop:bystage}}]
To prove that $f_1$ (resp. $f_3$) is an isomorphism, we apply the above corollary as $T=\T_\C^s$, $K=\Gamma$, and $H=\T_\C^{d-(n-s)}$ (resp. $T=\T_\C^n$, $K=\T_\C^d$, and $H=\T_\C^{n-s}$). 
Thus, we only have to show that $f_2$ is an $\Cstar$-equivariant isomorphism as Poisson varieties. To prove this, we show that the following natural vertical isomorphism as conical symplectic varieties (cf. Example \ref{ex:Atypesurface})
\begin{equation*}\tag{*}
\begin{tikzcd}
T_\Gamma \ar[r, hook, "\underline{A}^T"]&\T_\C^s/\Gamma\ar[r, bend left]&\C^{2s}/\Gamma\ar[d, "\wr"]\\
\T_\C^{d-(n-s)}=\T_\C^d/\T_\C^{n-s}\ar[r, hook, "A^T"]&\T_\C^n/\T_\C^{n-s}\ar[r, bend left]&\C^{2n}/\hspace{-3pt}/\hspace{-3pt}/\T_\C^{n-s}
\end{tikzcd}
\end{equation*}
is equivariant with respect to the natural identification $T_\Gamma \xrightarrow{\sim} \T_\C^{d-(n-s)}$ induced from: 
\[\begin{tikzcd}
\T_\C^s\ar[d, two heads, "\psi_\ell"]&\T_\C^{d-(n-s)}\ar[l, hook', "\underline{A}^T"']\ar[r, two heads]\ar[d, two heads, "\psi_\ell"]&T_\Gamma:=\T_\C^{d-(n-s)}/\Gamma\cap\T_\C^{d-(n-s)}\ar[dl, "\sim"]\\
\T_\C^s&\T_\C^{d-(n-s)}\ar[l, hook', "\underline{A}^T"']
\end{tikzcd},\]
where $\psi_\ell : \T_\C^s \to \T_\C^s$ is defined as $(t_1, \ldots, t_s)\mapsto (t_1^{\ell_1}, \ldots, t_s^{\ell_s})$. 
More precisely, we should show that the center positioned subdiagram of the following diagram is commutative:  
\[\begin{tikzcd}[column sep=small, contains/.style = {draw=none,"\in" description,sloped}, icontains/.style = {draw=none,"\ni" description,sloped}, inclusion/.style = {draw=none,"\subset" description,sloped}, niclusion/.style = {draw=none,"\supset" description,sloped}]
                    &\T_\C^{d-(n-s)}\ar[r, hook, "\underline{A}^T"]\ar[d, two heads]&\T_\C^s\ar[r, bend left]\ar[d, two heads]&\C^{2s}\ar[d, two heads]\\
                    &T_\Gamma:=\T_\C^{d-(n-s)}/\Gamma\cap\T_\C^{d-(n-s)}\ar[r, hook, "\underline{A}^T"]\ar[dd, "\wr"]&\T_\C^s/\Gamma\ar[r, bend left]&\C^{2s}/\Gamma\ar[dd, "\wr"]\\
\T_\C^{d-(n-s)}\ar[uur, bend left, equal]\ar[ur, two heads]\ar[dr, two heads, "\psi_\ell"]&&&\\
                    &\T_\C^d/\T_\C^{n-s}=\T_\C^{d-(n-s)}\ar[r, hook, "\underline{A}^T"]&\T_\C^s=\T_\C^n/\T_\C^{n-s}\ar[r, bend left]&\C^{2n}/\hspace{-3pt}/\hspace{-3pt}/\T_\C^{n-s}\\
                    &\T_\C^d\ar[u, two heads, "i^*"]\ar[r, hook, "{A}^T"]&\T_\C^n\ar[r, bend left]\ar[u, two heads, "{B_0}^T"]&\C^{2n}
\end{tikzcd}\]
To do so, we describe how an element $\bm{r}\in\T_\C^{d-(n-s)}$ will go through in the above diagram, and we express the coordinate rings of the varieties and isomorphisms among them below (cf.\ Example \ref{ex:Atypesurface}), where we take a $\bm{t}\in\T_\C^d$ such that $i^*(\bm{t})=\psi_\ell(\bm{r})$. Note that $\C^{2n}/\hspace{-3pt}/\hspace{-3pt}/\T_\C^{n-s}=\prod_{k=1}^s{\C^{2\ell_k}/\hspace{-3pt}/\hspace{-3pt}/\T_\C^{\ell_k-1}}$ and $\C^{2s}/\Gamma=\prod_{k=1}^s{\C^2/\Z_{\ell_k}}$. 

\begin{equation*}
\begin{tikzcd}[column sep=small, contains/.style = {draw=none,"\in" description,sloped}, icontains/.style = {draw=none,"\ni" description,sloped}, inclusion/.style = {draw=none,"\subset" description,sloped}, niclusion/.style = {draw=none,"\supset" description,sloped}]
						&\bm{r}\ar[r, mapsto, "\underline{A}^T"]\ar[d, mapsto]&(\bm{r}^{\bm{\underline{a}_1}}, \ldots, \bm{r}^{\bm{\underline{a}_s}})\ar[d, mapsto]\ar[r, bend left]&\C[\C^{2s}]\ar[r, equal]&\bigotimes_{k=1}^s{\C[\underline{z}_k, \underline{w}_k]}\\
                    &\overline{\bm{r}}\ar[r, mapsto, "\underline{A}^T"]\ar[dd, mapsto, "\wr"]&(\overline{\bm{r}}^{\bm{\underline{a}_1}}, \ldots, \overline{\bm{r}}^{\bm{\underline{a}_s}})\ar[r, bend left]&\C[\C^{2s}/\Gamma]\ar[r, equal]&\bigotimes_{k=1}^s{\C[u_k, x_k, y_k]/\langle u_k^{\ell_k}-x_ky_k\rangle}\ar[u, hook, "{u_k=\underline{z}_k\underline{w}_k, \ \ x_k=\underline{z}_k^{\ell_k}, \ \ y_k=\underline{w}_k^{\ell_k}}"']\\
{\bm{r}}\ar[ur, mapsto]\ar[uur, equal, bend left]\ar[dr, mapsto, "\psi_\ell"]&&&\\
                    &\psi_{\ell}({\bm{r}})\ar[r, mapsto, "\underline{A}^T"]&{\begin{array}{c}({\bm{r}}^{\ell_1\bm{\underline{a}_1}}, \ldots, {\bm{r}}^{\ell_s\bm{\underline{a}_s}})\\ \shortparallel \\ (\bm{t}^{\sum_{i\in F_1}{\bm{a_i}}}, \ldots, \bm{t}^{\sum_{i\in F_s}{\bm{a_i}}})\end{array}}\ar[r, bend left, shift right]&\C[\C^{2n}/\hspace{-3pt}/\hspace{-3pt}/\T_\C^{n-s}]\ar[r, equal]&\bigotimes_{k=1}^s{\fr<{\C[\{z_{i}w_{i}\}_{i\in F_k}, \prod_{i\in F_k}{z_{i}}, \prod_{i\in F_k}w_{i}]}/{\langle z_{i}w_{i}=z_{j}w_{j}\rangle_{i, j\in F_k}}>}\ar[uu, "{\begin{array}{l}\wr \  u_k=\ell_k z_{i}w_{i} \ (i\in F_k), \\ \ \ \ x_k=\prod_{i\in F_k}{\sqrt{\ell_k}z_{i}}, \\ \ \ \ y_k=\prod_{i\in F_k}{\sqrt{\ell_k}w_{i}}\end{array}}"']\\
&{\bm{t}}\ar[u, mapsto, "i^*"]\ar[r, mapsto, "{A}^T"]&(\bm{t}^{\bm{a_1}}, \ldots, \ldots, \bm{t}^{\bm{a_n}})\ar[u, mapsto, "{B_0}^T"]\ar[r, bend left]&\C[\C^{2n}]\ar[r, equal]&\bigotimes_{k=1}^s{\C[\{z_{i}, w_{i}\}_{i\in F_k}]}
\end{tikzcd}\end{equation*}
Since $i(\bm{\underline{a}_k})=\sum_{j\in F_k}{\bm{a_j}}$ by definition, we can easily check the equivariance of (*) from the above diagram.

Next, we show that $(\C^{2s}/\Gamma)/\hspace{-3pt}/\hspace{-3pt}/T_\Gamma\cong(\C^{2n}/\hspace{-3pt}/\hspace{-3pt}/\T_\C^{n-s})/\hspace{-3pt}/\hspace{-3pt}/\T_\C^{d-(n-s)}$. By definition, we only have to show the commutativity of the middle square in the diagram below, that is, the compatibility of the moment maps: 
\[\begin{tikzcd}
\C^{2n}\ar[d, two heads, "\mu_A"]\ar[r, two heads]&\C^{2n}/\hspace{-3pt}/\T_\C^{n-s}&\C^{2n}/\hspace{-3pt}/\hspace{-3pt}/\T_\C^{n-s}\ar[d, two heads, "\mu_{\T_\C^d/\T_\C^{n-s}}"]\ar[l, hook']\ar[r, "\sim"]&\C^{2s}/\Gamma\ar[d, "\mu_{T_\Gamma}"]&\C^{2s}\ar[l, two heads]\ar[dd, two heads, "\mu_{\overline{A}}"]\\
(\gert_\C^d)^*&&(\gert_\C^d/\gert_\C^{n-s})^*\ar[ll, hook']\ar[r, "\sim"]\ar[drr, hook, "\psi_\ell^*"']&(\gert_\Gamma)^*\ar[dr, hook]&\\
&&&&(\gert_\C^{d-(n-s)})^*
\end{tikzcd}.\]
This can be easily checked. Finally, we note that $\vphi$ preserves the standard symplectic structure. In fact, $f_1$ and $f_3$ are naturally isomorphic as Poisson varieties by Corollary \ref{cor:naturalHam}, and $f_2$ also preserves the symplectic structure since the isomorphism $\C^{2n}/\hspace{-3pt}/\hspace{-3pt}/\T_\C^{n-s}\cong\C^{2s}/\Gamma$ preserves symplectic structures as we have seen in Example \ref{ex:Atypesurface}.    

\end{proof}

\begin{proof}[\textbf{Proof of Theorem \ref{thm:fundamentalgrp}}]
From Proposition \ref{prop:bystage}, we get a finite morphism $\vphi : Y_{\underline{A}}(0) \to Y_A(0)$ which preserves the symplectic structures. So, we only have to show that $\Gamma/\Gamma\cap\T_\C^{d-(n-s)}$ acts on $\vphi^{-1}(Y_A(0)_{\reg})$ freely. As seen in the proof of Proposition \ref{prop:bystage}, $\vphi$ is induced from the composition $\C^{2s} \to \C^{2s}/\Gamma \xrightarrow{\sim} \C^{2n}/\hspace{-3pt}/\hspace{-3pt}/\T_\C^{n-s}$ which corresponds to the following homomorphism of coordinate rings: 
\[\C[\C^{2s}]=\bigotimes_{k=1}^s{\C[\underline{z}_k, \underline{w}_k]}\leftarrow \C[\C^{2n}/\hspace{-3pt}/\hspace{-3pt}/\T_\C^{n-s}]=\bigotimes_{k=1}^s{\fr<{\C[\{z_{i}w_{i}\}_{i\in F_k}, \prod_{i\in F_k}{z_{i}}, \prod_{i\in F_k}w_{i}]}/{\langle z_{i}w_{i}=z_{j}w_{j}\rangle_{i, j\in F_k}}>},\]
where $\ell_k z_iw_i \mapsto \underline{z}_k\underline{w}_k \ (i\in F_k)$, $\prod_{i\in F_k}{\sqrt{\ell_k}z_i} \mapsto \underline{z}_k^{\ell_k}$, $\prod_{i\in F_k}{\sqrt{\ell_k}w_i} \mapsto \underline{w}_k^{\ell_k}$. 
In particular, if $\vphi([\bm{\underline{z}}, \bm{\underline{w}}])=[\bm{z}, \bm{w}]\in Y_A(0)$ satisfies $\prod_{i\in F_k}{z_{i}}\neq0$ or $\prod_{i\in F_k}{w_{i}}\neq0$ for some $k$, then $\underline{z}_k\neq0$ or $\underline{w}_k\neq0$. Thus, by Corollary \ref{cor:codim2sing} (2), we have 
\[\vphi^{-1}(Y_A(0)_{\reg})\subset \Set{[\bm{\underline{z}}, \bm{\underline{w}}] \in Y_{\underline{A}}(0) \ | \ {\begin{array}{l}\text{$\T_\C^{d-(n-s)}\cdot (\bm{\underline{z}}, \bm{\underline{w}})$ is closed in $\mu_{\underline{A}}^{-1}(0)$, and }\\ \text{$\ell_k\geq2$} \Rightarrow \ \underline{z}_k\neq0 \ \text{or} \  \underline{w}_k\neq0\end{array}}}\]
This implies that  $\Gamma/\Gamma\cap\T_\C^{d-(n-s)}$ acts on $\vphi^{-1}(Y_A(0)_{\reg})$ freely. 

\end{proof}

\begin{remark}\label{rem:phicodim}
Note that $\codim_{Y_{\underline{A}}(0)}{(Y_{\underline{A}}(0)-\vphi^{-1}(Y_A(0)_{\reg}))}\geq2$, in particular, $\codim_{Y_{\underline{A}}(0)_{\reg}}{(Y_{\underline{A}}(0)_{\reg}-\vphi^{-1}(Y_A(0)_{\reg}))}\geq2$. In fact, we only have to show $\codim_{Y_A(0)}\Sing(Y_A(0))\geq2$ since $\vphi$ is a finite surjective morphism. However, this follows from Theorem \ref{thm:PW}. We will use this property in the proof of Lemma \ref{lem:reduction to simple}. 
\end{remark}

\section{A computation of the fundamental group}
In the previous section, we described the fundamental group $\pi_1(Y_A(0)_{\reg})\cong \Gamma/\Gamma\cap\T_\C^{d-(n-s)}$, where the intersection is considered as  
\[\begin{tikzcd}\Gamma:=\prod_{k=1}^s{\Z/\ell_k\Z}\ar[r, hook]& \T_\C^s& \T_\C^{d-(n-s)} \ar[l, hook', "\underline{A}^T"'] \end{tikzcd}.\]
In this section, we compute this more concretely. In particular, we will show that an affine hypertoric variety $Y_A(0)$ satisfies $\pi_1(Y_A(0)_{\reg})=0$ if and only if $Y_A(0)$ is simple (Corollary \ref{cor:converse}). In the latter part, we also mention an (possible) alternative way to compute $\pi_1(Y_A(0)_{\reg})$ more efficiently (cf.\ Remark \ref{rem:alternative}). 

For $\overline{B}^T : \Z^s\to \Z^{n-d}$, we consider a map $\widetilde{B}^T : \Z^s\to \Z^{n-d}$:  
\[\overline{B}^T=\begin{pmatrix}&&&\\\bm{b^{(1)}}&\bm{b^{(2)}}&\cdots&\bm{b^{(s)}}\\&&&\end{pmatrix}, \ \ \ \widetilde{B}^T:=\begin{pmatrix}&&&\\m_1\bm{b^{(1)}}&m_2\bm{b^{(2)}}&\cdots&m_s\bm{b^{(s)}}\\&&&\end{pmatrix},\]
where $m_k:=\prod_{i\neq k}\ell_i$. Then, for the natural surjection $q: \Z^s\to \Gamma:=\prod_{k=1}^s{\Z/\ell_k\Z}$, by the surjectivity of $\overline{B}^T$ and the definition of $\widetilde{B}^T$, we have $\widetilde{B}^T(\Ker q)=\ell_1\cdots\ell_s\Z^{n-d}$. This induces the following commutative diagram:  
\[\begin{tikzcd}
\Z^s\ar[d, "q"]\ar[r, "\widetilde{B}^T"]&\Z^{n-d}\ar[d]\\
\prod_{k=1}^s{\Z/\ell_k\Z}\ar[r, "h"]&\Z^{n-d}/\ell_1\cdots\ell_s\Z^{n-d}
\end{tikzcd}.\]
Then we have the following description of $\pi_1(Y_A(0)_{\reg})$.  
\begin{proposition}\label{prop:explicit} In the above notation, we have 
\[\Gamma\cap\T_\C^{d-(n-s)}=q(\Ker \widetilde{B}^T)\]
In particular, we have 
\[\pi_1(Y_A(0)_{\reg})\cong\Gamma/\Gamma\cap\T_\C^{d-(n-s)}\cong\Image h=\Image \widetilde{B}^T/\ell_1\cdots\ell_s\Z^{n-d}.\]
\end{proposition}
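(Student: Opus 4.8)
The plan is to compute $\Gamma\cap\T_\C^{d-(n-s)}$ directly from the exponential description of $\T_\C^s$ and then read off the quotient. Recall first that dualizing $0\to\Z^{n-d}\xrightarrow{\overline{B}}\Z^s\xrightarrow{\underline{A}}\Z^{d-(n-s)}\to0$ shows that $\overline{B}^T\colon\Z^s\to\Z^{n-d}$ is surjective and that $\underline{A}^T$ identifies $\T_\C^{d-(n-s)}$ with $\Ker(\overline{B}^T\colon\T_\C^s\to\T_\C^{n-d})$. Put $L:=\ell_1\cdots\ell_s$, so that $m_k\ell_k=L$ for each $k$. Every element of $\Gamma$ is $q(\bm{a})$ for some $\bm{a}=(a_1,\dots,a_s)\in\Z^s$, and under $\Gamma\hookrightarrow\T_\C^s$ its $k$-th coordinate is the root of unity $\exp(2\pi i\,a_k/\ell_k)$. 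Unwinding the $j$-th coordinate of $\overline{B}^T(q(\bm{a}))\in\T_\C^{n-d}$, the condition $\overline{B}^T(q(\bm{a}))=1$ becomes $\sum_k(a_k/\ell_k)(\bm{b^{(k)}})_j\in\Z$ for all $j$, i.e.\ $\sum_k(a_k/\ell_k)\bm{b^{(k)}}\in\Z^{n-d}$; multiplying by $L$ and using $La_k/\ell_k=m_ka_k$, this is equivalent to $\widetilde{B}^T(\bm{a})=\sum_k m_ka_k\bm{b^{(k)}}\in L\Z^{n-d}$. Thus
\[
\Gamma\cap\T_\C^{d-(n-s)}=\bigl\{\,q(\bm{a}) : \bm{a}\in\Z^s,\ \widetilde{B}^T(\bm{a})\in L\Z^{n-d}\,\bigr\}.
\]

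Next I would prove $\Gamma\cap\T_\C^{d-(n-s)}=q(\Ker\widetilde{B}^T)$. The inclusion $\supseteq$ is immediate from the description above. For $\subseteq$, take $\bm{a}$ with $\widetilde{B}^T(\bm{a})\in L\Z^{n-d}$, so $\bm{v}:=L^{-1}\widetilde{B}^T(\bm{a})=\sum_k(a_k/\ell_k)\bm{b^{(k)}}$ lies in $\Z^{n-d}$; by surjectivity of $\overline{B}^T$ pick $\bm{f}\in\Z^s$ with $\sum_k f_k\bm{b^{(k)}}=-\bm{v}$, and set $a_k':=a_k+\ell_kf_k$. Then $q(\bm{a}')=q(\bm{a})$ since $a_k'\equiv a_k\pmod{\ell_k}$, while, using $m_k\ell_k=L$,
\[
\widetilde{B}^T(\bm{a}')=\widetilde{B}^T(\bm{a})+L\sum_k f_k\bm{b^{(k)}}=L\bm{v}-L\bm{v}=0,
\]
so $q(\bm{a})=q(\bm{a}')\in q(\Ker\widetilde{B}^T)$. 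This is the one point at which surjectivity of $\overline{B}^T$ is used, and it is really the crux of the argument: it is exactly what lets one correct a representative $\bm{a}$ within its $q$-fiber so that $\widetilde{B}^T$ annihilates it. I expect no genuine obstacle beyond keeping the exponential bookkeeping straight.

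For the remaining assertion, note that $\widetilde{B}^T(\Ker q)=\widetilde{B}^T(\ell_1\Z\times\cdots\times\ell_s\Z)=L\,\overline{B}^T(\Z^s)=L\Z^{n-d}$, again by surjectivity of $\overline{B}^T$; this is both why $h$ is well-defined and why $L\Z^{n-d}\subseteq\Image\widetilde{B}^T$, so that $\Image h=\Image\widetilde{B}^T/L\Z^{n-d}$. By the displayed description, $\Ker h=\bigl\{q(\bm{a}):\widetilde{B}^T(\bm{a})\in L\Z^{n-d}\bigr\}=\Gamma\cap\T_\C^{d-(n-s)}$. Applying Theorem \ref{thm:fundamentalgrp} together with the first isomorphism theorem to $h\colon\Gamma\to\Z^{n-d}/L\Z^{n-d}$ then yields
\[
\pi_1(Y_A(0)_{\reg})\cong\Gamma/(\Gamma\cap\T_\C^{d-(n-s)})=\Gamma/\Ker h\cong\Image h=\Image\widetilde{B}^T/\ell_1\cdots\ell_s\Z^{n-d},
\]
as claimed.
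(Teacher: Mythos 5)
Your proof is correct and follows essentially the same route as the paper: translate membership in $\Ker(\overline{B}^T:\T_\C^s\to\T_\C^{n-d})$ into the congruence $\widetilde{B}^T(\bm{a})\in\ell_1\cdots\ell_s\Z^{n-d}$, then adjust the representative within its $q$-fiber to land in $\Ker\widetilde{B}^T$, and finish with the first isomorphism theorem applied to $h$. The only (cosmetic) difference is that the paper performs the adjustment after normalizing $\bm{b^{(1)}},\dots,\bm{b^{(n-d)}}$ to unit vectors, whereas you invoke the surjectivity of $\overline{B}^T$ directly, which is a bit cleaner.
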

As an easy corollary, we can give the desired converse of Corollary \ref{cor:hypertoric trivial} as the following．
\begin{corollary}\label{cor:converse}
The following are equivaelnt: 
\begin{itemize}
\item[(i)] $\codim \Sing(Y_A(0))\geq4$. 
\item[(ii)] $\pi_1(Y_A(0)_{\reg})=0$.
\item[(iii)] $Y_A(0) (\text{or} \ \calH_B^0)$ is simple, i.e., $\ell_k=1$ for any $k$. 
\end{itemize}
\end{corollary}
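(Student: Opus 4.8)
The plan is to close a cycle of implications, using only results already in hand. The equivalence (i) $\Leftrightarrow$ (iii) is exactly the last assertion of Corollary \ref{cor:codim2sing} (1): $B$ is simple if and only if $\codim_{Y_A(0)}\Sing(Y_A(0))\geq 4$. The implication (iii) $\Rightarrow$ (ii) is Corollary \ref{cor:hypertoric trivial}. Hence the only remaining content is (ii) $\Rightarrow$ (iii), which I would establish via its contrapositive together with the explicit description of the fundamental group in Proposition \ref{prop:explicit}.

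By Proposition \ref{prop:explicit} we have $\pi_1(Y_A(0)_{\reg})\cong\Image\widetilde{B}^T/\ell_1\cdots\ell_s\Z^{n-d}$, so this group is trivial precisely when $\Image\widetilde{B}^T\subseteq\ell_1\cdots\ell_s\Z^{n-d}$. Since the columns of $\widetilde{B}^T$ are $m_k\bm{b^{(k)}}$ with $m_k=\prod_{i\neq k}\ell_i$, and $\ell_1\cdots\ell_s=m_k\ell_k$, cancelling the common factor $m_k$ shows that this containment holds if and only if $\bm{b^{(k)}}\in\ell_k\Z^{n-d}$ for every $k$. Now each $\bm{b^{(k)}}$ is a primitive vector of $\Z^{n-d}$: since $A$ is unimodular, so is $B^T$, and since $B^T\colon\Z^n\to\Z^{n-d}$ is surjective (the Gale sequence splits), any nonzero column of $B^T$ extends to a set of $n-d$ columns whose determinant is $\pm1$, hence to a $\Z$-basis of $\Z^{n-d}$; in particular $\bm{b^{(k)}}\ne\bm 0$ is primitive. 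Therefore $\bm{b^{(k)}}\in\ell_k\Z^{n-d}$ forces $\ell_k=1$. Consequently $\pi_1(Y_A(0)_{\reg})=0$ implies $\ell_k=1$ for all $k$, i.e.\ (iii) holds; and conversely this argument recovers (iii) $\Rightarrow$ (ii) as well.

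I do not expect any genuine obstacle: the statement is a short deduction from Corollary \ref{cor:codim2sing}, Corollary \ref{cor:hypertoric trivial} and Proposition \ref{prop:explicit}. The only point that deserves a line of justification is the primitivity of $\bm{b^{(k)}}$, which is exactly where unimodularity of $A$ (equivalently of $B$) is used — without it the equivalence (ii) $\Leftrightarrow$ (iii) would fail, for instance if some $\bm{b^{(k)}}$ were a proper integer multiple of a primitive vector.
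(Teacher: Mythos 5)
Your proposal is correct and follows essentially the same route as the paper: (i) $\Leftrightarrow$ (iii) from Corollary \ref{cor:codim2sing} (1), (iii) $\Rightarrow$ (ii) from Corollary \ref{cor:hypertoric trivial}, and (ii) $\Rightarrow$ (iii) by reading off from Proposition \ref{prop:explicit} that triviality of $\pi_1$ forces $\bm{b^{(k)}}\in\ell_k\Z^{n-d}$, which contradicts unimodularity (and $\bm{b^{(k)}}\neq\bm{0}$, Assumption \ref{ass:1}) unless $\ell_k=1$. Your extra sentence justifying primitivity of $\bm{b^{(k)}}$ is a welcome elaboration of a step the paper leaves implicit, but it is not a different argument.
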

\begin{proof}
(i) $\Leftrightarrow$ (iii) was proven in Corollary \ref{cor:codim2sing} (1). (ii) $\Leftarrow$ (iii)  was already done in Corollary \ref{cor:hypertoric trivial}. We prove 
(ii) $\Rightarrow$ (iii). Since the fundamental group $\pi_1(Y_A(0)_{\reg})=\Image \widetilde{B}^T/\ell_1\cdots\ell_s\Z^{n-d}$ is trivial, we have $\Image \widetilde{B}^T\subset\ell_1\cdots\ell_s\Z^{n-d}$. If there exists $k$ such that $\ell_k>1$, then $m_k\bm{b^{(k)}}\in\ell_1\cdots\ell_s\Z^{n-d}$, so $\bm{b^{(k)}}\in\ell_k\Z^{n-d}$. However this contradicts to the unimodularity of $B$ unless $\bm{b^{(k)}}=0$. This completes the proof (cf.\ Assumption \ref{ass:1}). 
\end{proof}
\begin{remark}(``(i) $\Rightarrow$ (ii)'' is nontrivial among general conical symplectic varieties)\label{rem:nontrivialgeneral}\hspace{2pt}\\
By the above corollary, if $Y_A(0)$ has codimension 2 singular locus, then the fundamental group of the regular locus is nontrivial. This is a very special property of affine hypertoric varieties among conical symplectic varieties. In fact, for example, the nilpotent orbit closure $\overline{\calO}_{[4, 1]}$ in $\gersl_5(\C)$ of type $[4, 1]$ has $A_{2}$-singularity $S_{A_{2}}$ along the codimension 2 singular locus $\calO_{[3, 2]}$ (for example, see the proof of \cite[Proposition 3.2]{KP:minGL}), however its regular locus ${\calO}_{[4, 1]}$ is simply-connected (cf.\ \cite[Corollary 6.1.6]{CM}). 
\end{remark}

\begin{proof}[\textbf{Proof of Proposition \ref{prop:explicit}}]
$(\zeta_{\ell_1}^{k_1}, \ldots, \zeta_{\ell_s}^{k_s})$ is in $\T_\C^{d-(n-s)}=\Ker (\overline{B}^T : \T_\C^s \twoheadrightarrow \T_\C^{n-d})$ if and only if 
\[\overline{B}^T\left( \fr<k_1/\ell_1>, \ldots, \fr<k_s/\ell_s>\right)^T\in\Z^{n-d},\]
where $\zeta_\ell:=e^{\fr<2\pi\sqrt{-1}/\ell>}$. 
This is equivalent to the existence of $p_1, \ldots, p_{n-d}\in\Z$ such that $\widetilde{B}^T(k_1, \ldots, k_s)^T=\ell_1\cdots\ell_s(p_1, \ldots, p_{n-d})^T$. One can easily show that this holds if and only if there exists $q_1, \ldots, q_s\in\Z$ such that $\widetilde{B}^T(k_1-\ell_1q_1, \ldots, k_s-\ell_sq_s)^T=0$. In fact, the ``if'' part is trivial. To prove the ``only if'' part, by interchanging column vectors of ${B}^T$ and multiplying $B^T$ by some $C\in GL_{n-d}(\Z)$ from left, one can assume that $\bm{b^{(1)}}=\bm{e_1}, \ldots, \bm{b^{(n-d)}}=\bm{e_{n-d}}$, where $\bm{e_i}$ is the $i$-th unit vector. Then, $(q_1, \ldots, q_s)=(p_1, \ldots, p_{n-d}, 1, \ldots, 1)$ satisfies the condition. This completes the proof. 
\end{proof}

We give a nontrivial example. 

\begin{example}(The case of $\overline{\calO}^{\min}(\ell_1, \ell_2, \ell_3)$ cf. Example \ref{ex:Ominlll})\\
In this case, by some easy computations, we have 
\[\widetilde{B}^T=\begin{pmatrix}\ell_2\ell_3&0&-\ell_1\ell_2\\0&\ell_1\ell_3&-\ell_1\ell_2\end{pmatrix}, \ \ \ \Ker(\widetilde{B}^T)=\left(\fr<\ell_1/g>, \fr<\ell_2/g>, \fr<\ell_3/g>\right)\Z,\]
where $g:=\gcd(\ell_1, \ell_2, \ell_3)$. 
Thus, by the Proposition \ref{prop:explicit}, we have 
\[\pi_1(\overline{\calO}^{\min}(\ell_1, \ell_2, \ell_3)_{\reg})\cong\left(\Z/\ell_1\Z\times\Z/\ell_2\Z\times\Z/\ell_3\Z\right)\bigg/\left(\overline{\fr<\ell_1/g>}, \overline{\fr<\ell_2/g>}, \overline{\fr<\ell_3/g>}\right)\Z.\]
In particular, we have
\[|\pi_1(\overline{\calO}^{\min}(\ell_1, \ell_2, \ell_3)_{\reg})|=\fr<\ell_1\ell_2\ell_3/g>.\]
More generally, we can compute $\pi_1(\overline{\calO}^{\min}(\ell_1, \ldots, \ell_s)_{\reg})$ as the following:  
\[\pi_1(\overline{\calO}^{\min}(\ell_1, \ldots, \ell_s)_{\reg})\cong\prod_{k=1}^s{\Z/\ell_k\Z}\bigg/\left(\fr<\ell_1/g>, \ldots, \fr<\ell_s/g>\right),\]
where $g:=\gcd(\ell_1, \ldots, \ell_s)$
\end{example}
\begin{remark}
Unfortunately, the concrete expression of $\pi_1(Y_A(0)_{\reg})$ in Proposition \ref{prop:explicit} is not practical to compute efficiently since we should solve some Diophantine equations $\widetilde{B}^T\bm{v}=0$.  
\end{remark}
\begin{remark}(Another description of $\pi_1(Y_A(0)_{\reg})$ as a cokernel)\label{rem:alternative}\\
By considering $Y_A(0)$ as the hyperk\"{a}hler quotient of $\mathbb{H}^n:=(\R\oplus\C)^n$ by the real torus $\T_\R^d$ (cf.\ \cite{BD}), we can describe the fundamental group $\pi_1(Y_A(0)_{\reg})$ in a different way. First, by the remaining $\T_\R^{n-d}$-action on $Y_A(0)$, we can consider the hyperk\"{a}hler moment map   
\[\Phi : Y_A(0) \to \R^{n-d}\otimes\R^3=\R^{3(n-d)}\]
(for the detail, see \cite[Theorem 3.1]{BD}). Then, for the associated hyperplane arrangement $\calH_B^0$ (see Definition \ref{def:arr})，it is known that $\Phi| : \Phi^{-1}(U) \to U:=\R^{3(n-d)}\hspace{-2pt}\setminus\hspace{-2pt}\left(\bigcup_{H\in\calH_B^0} {H\hspace{-2pt} \otimes \hspace{-2pt}\R^3}\right)$ is a $\T_\R^{n-d}$-bundle. Moreover, by Corollary \ref{cor:singularlocus} (2) and the correspondence between GIT construction of $Y_A(0)$ and the hyperk\"{a}hler quotient construction of $Y_A(0)$, one can easily check that $Y_A(0)_{\reg}=\Phi^{-1}(U_0)$, where $U_0:=\R^{3(n-d)}\setminus\left(\bigcup_{{F: \text{flat s.t. $|F|\geq\rank F+1$}}}{H_F \otimes \R^3}\right)$. Then, the $\R$-codimension $\codim_{\R, Y_A(0)_{\reg}}{\Phi^{-1}(U)}\geq3$ since the fiber $\Phi^{-1}(x)$ of any generic element $x\in H\otimes\R^3$ for any $H\in\calH_B^0$ is $\T_\R^{n-d-1}$ (cf.\ \cite[Theorem 3.1.(2)]{BD}). 
In particular, we have $\pi_1(Y_A(0)_{\reg})=\pi_1(\Phi^{-1}(U))$. Hence, we get a description of $\pi_1(Y_A(0)_{\reg})$ through the homotopy long exact sequence:   
\[\pi_2(U) \xrightarrow{\partial} \pi_1(\T_\R^{n-d})=\Z^{n-d} \to \pi_1(\Phi^{-1}(U))=\pi_1((Y_A(0))_{\reg}) \to \pi_1(U). \]
Since we have $\pi_1(U)=0$ and $\pi_2(U)=\Z^s$ by \cite[Theorem 7.2.1]{Bj}, we get the following exact sequence: 
\[\Z^s \xrightarrow{\partial} \Z^{n-d} \to \pi_1((Y_A(0))_{\reg}) \to 0.\]
Although we should describe the boundary map $\partial$ in terms of matrices $A$ or $B$, we don't know the answer. 
\end{remark}

\section{The uniqueness of symplectic structures and Bogomolov's decomposition}

A conical symplectic variety $Y$ with a symplectic 2-form $\om$ with weight $\ell$ is called {\it irreducible} if any symplectic 2-form $\sigma$ with weight $\ell$ satisfies $\sigma=c\om$ ($c\in\Cstar$). In \cite{Nam:equi}, Namikawa proposed the following problem which can be seen as the conical symplectic analogue of the Bogomolov's decomposition theorem. 
\begin{problem}{\rm (\cite[Problem 7.3]{Nam:equi}, Bogomolov's decomposition)}\label{problem:bogomolov}\\
For any conical symplectic variety $(Y, \om)$ with $m:=|\pi_1(Y_{\reg})|<\infty$, its universal cover $(\overline{Y}, \overline{\om})$ (cf.\ Definition {\rm \&} Proposition \ref{def and prop:universal cover}) is decomposed into the product 
\[(\overline{Y}, \overline{\om})\cong\prod_{i}{(Y_i, \om_i)}\]
of irreducible conical symplectic varieties $(Y_i, \om_i)$ with weight $m\ell$. 
\end{problem}
In this section, we solve this problem for affine hypertoric varieties. Since we have already seen that the universal cover of $Y_A(0)$ is given by another (simple) hypertoric variety $Y_{\underline{A}}(0)$, we only have to consider such ones. However, we will show that not necessarily simple affine hypertoric varieties also admit a decomposition into some irreducible ones (also see Remark \ref{rem:universal} (1)).   

First, we consider a sufficient condition for $Y_A(0)$ to decompose into smaller ones. 
\begin{definition}\label{def:decomposable}
A $d\times n$-matrix $A$ (or $Y_A(\alpha)$) is {\it decomposable} if one of the following holds: 
\begin{itemize}
\item[(i)] There exists $i$ such that $\bm{a_i}=0$. 
\item[(ii)] $A\sim A_1\oplus A_2$, i.e., there exists $P\in GL_d(\Z)$ and $Q\in GL_n(\Z)$, such that $PAQ=A_1\oplus A_2$ where $Q$ is a composition of two operations:  interchanging some column vectors and multiplying some column vectors by $-1$ (cf.\ Definition \ref{def:operation}).
\end{itemize} 
We call $A$ (or $Y_A(\alpha)$) {\it indecomposable} if $A$ is not decomposable. 
\end{definition}
\begin{remark}
To consider $\C^2$ as an indecomposable hypertoric variety, we also formally think that the $1\times 1$ zero matrix is indecomposable. This definition is natural since  $A$ is indecomposable in this sense if and only if the associated vector matroid $M(A)$ is {\it (2-)connected} (for the detail, see \cite[{Exercise 4.1.7}]{Ox}). 
\end{remark}
\begin{remark}\label{rem:decomposition}
Actually, if (i) (resp. (ii)) satisfies, there exists a natural isomorphism $Y_A(0)\cong Y_{A^{\hat{i}}}(0)\times \C^2$ (resp. $Y_A(0)\cong Y_{A_1}(0)\times Y_{A_2}(0)$), where $A^{\hat{i}}$ is the matrix obtained from $A$ by deleting the $i$-th column vector (cf.\ \cite[{Lemma 2.27}]{Nag}). In particular, since we can assume that $A$ is equivalent to a matrix of the form  
\begin{equation*}\tag{*}
\left(
\begin{array}{ccc|c@{}c@{}cc}
&&&\begin{array}{|cccc|}\hline  &   & &\\&\vcbig{{A_1}}& \\\hline\end{array}&&&\LargeO\\
&&&& \begin{array}{|cccc|}\hline &   &   & \\&\vcbig{{A_2}}&\\\hline\end{array}&& \\
\vcbig{{O_p}}&&& &\ddots &  \\
&&&\LargeO&&&\begin{array}{|cccc|}\hline&&&\\&\vcbig{{A_r}}&\\\hline\end{array} \\ 
\end{array}
\right),\end{equation*}
where each $A_m$ is indecomposable and $O_p$ denotes the $d\times p$-zero matrix, we have a decomposition $(Y_A(\alpha), \om) \cong (\C^{2p}\times\prod_{m=1}^r{Y_{A_m}(\alpha_m)}, \om_0+\sum_{m=1}^r{\om_m})$, where $\alpha=\alpha_1\oplus\cdots\oplus\alpha_r\in\Z^d$ and $\om_m$ is the standard symplectic form on $Y_{A_m}(\alpha_m)_{\reg}$. 
\end{remark}
The main theorem in this section is the following. 
\begin{theorem}{\rm (Bogomolov's decomposition for affine hypertoric varieties)}\label{thm:main final}\\
If $A$ is equivalent to (*), then the following gives a decomposition into irreducible conical symplectic varieties
\[(Y_A(0), \om) \cong \prod_{i=1}^p{(\C^2, dz\wedge dw)}\times\prod_{m=1}^r{(Y_{A_m}(0), \om_m)}.\]
In particular, $A$ is block indecomposable if and only if $Y_A(0)$ is an irreducible conical symplectic variety. In this case, any (not necessarily symplectic) homogeneous 2-form on $Y_A(0)_{\reg}$ with weight 2 is unique up to scalar. 
\end{theorem}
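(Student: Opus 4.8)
The plan is as follows. The decomposition
$(Y_A(0),\om)\cong\prod_{i=1}^{p}(\C^{2},dz\wedge dw)\times\prod_{m=1}^{r}(Y_{A_m}(0),\om_m)$
as $\Cstar$-equivariant symplectic varieties is exactly Remark~\ref{rem:decomposition}, so what remains is irreducibility of the factors together with the converse. For the converse, if $A$ is not block indecomposable then $Y_A(0)\cong Z_1\times Z_2$ for two positive-dimensional conical symplectic varieties with $\om=\mathrm{pr}_1^{*}\om_1+\mathrm{pr}_2^{*}\om_2$; since $\Sing(Z_1\times Z_2)=(\Sing Z_1\times Z_2)\cup(Z_1\times\Sing Z_2)$ we get $(Z_1\times Z_2)_{\reg}=(Z_1)_{\reg}\times(Z_2)_{\reg}$, on which $\mathrm{pr}_1^{*}\om_1+2\,\mathrm{pr}_2^{*}\om_2$ is a closed, non-degenerate, homogeneous weight-$2$ $2$-form not proportional to $\om$, so $Y_A(0)$ is not irreducible. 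The factor $\C^{2}$ is irreducible because, by the grading, a homogeneous weight-$2$ $2$-form on $\C^{2}$ has constant coefficient. Hence everything reduces to the last assertion: if $A$ is block indecomposable, every homogeneous weight-$2$ $2$-form on $Y_A(0)_{\reg}$ is a scalar multiple of $\om$ (for symplectic forms this is irreducibility).

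First I would reduce to the case where $A$ is \emph{simple}. Simplification preserves block-indecomposability: $M(\overline B^{T})$ is the simplification of $M(B^{T})$, $M(\underline A)\cong M(\overline B^{T})^{*}$, and matroid connectivity is preserved both by duality and by simplification, so $M(A)$ connected (with at least two elements) forces $M(\underline A)$ connected, i.e.\ $\underline A$ block indecomposable. Given a homogeneous weight-$2$ $2$-form $\sigma$ on $Y_A(0)_{\reg}$, pull it back along the finite morphism $\vphi\colon Y_{\underline A}(0)\to Y_A(0)$ of Theorem~\ref{thm:fundamentalgrp}; by Remark~\ref{rem:phicodim} the complement of $\vphi^{-1}(Y_A(0)_{\reg})$ in the smooth variety $Y_{\underline A}(0)_{\reg}$ has codimension $\geq2$, so $\vphi^{*}\sigma$ extends to a homogeneous weight-$2$ $2$-form on $Y_{\underline A}(0)_{\reg}$. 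If the simple case is known this extension equals $c\,\vphi^{*}\om$ on $\vphi^{-1}(Y_A(0)_{\reg})$ (using that $\vphi$ preserves symplectic structures), and since $\vphi$ is \'etale over $Y_A(0)_{\reg}$ its pullback is injective on $2$-forms there, whence $\sigma=c\,\om$. So assume $B=\overline B$ is simple; write $\mu=\mu_A\colon\C^{2s}\to\C^{d-(n-s)}$, $W=\mu^{-1}(0)$, and $T=\T_\C^{d-(n-s)}$ acting on $\C^{2s}$ through $\underline A^{T}$.

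By Proposition~\ref{prop:keyreflexive}, $W$ is a normal complete intersection smooth in codimension $3$, so by Vetter's criterion $\Om^{1}_{W}$ and $\Om^{2}_{W}$ are reflexive. Let $W_0\subseteq W$ be the open locus over $Y_A(0)_{\reg}$ where $W$ is smooth, $\mu$ is submersive and $T$ acts freely; its complement has codimension $\geq2$ (Theorem~\ref{thm:PW}) and $W_0\to Y_A(0)_{\reg}$ is a principal $T$-bundle onto a dense open subset. The pullback of $\sigma$ is a $T$-basic homogeneous weight-$2$ $2$-form on $W_0$, which by reflexivity of $\Om^{2}_{W}$ extends to $\widehat\sigma\in H^{0}(W,\Om^{2}_{W})$; it is $T$-invariant by density, and for $\xi\in\Lie T$ the contraction of $\widehat\sigma$ with the generating vector field of $\xi$ is a section of the reflexive sheaf $\Om^{1}_{W}$ vanishing on the dense open $W_0$, hence zero, so $\widehat\sigma$ is $T$-basic. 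Since the weight-preserving surjection $\Om^{2}_{\C^{2s}}\otimes_{\C[\C^{2s}]}\C[W]\twoheadrightarrow\Om^{2}_{W}$ has $\C[W]_{0}=\C$, the weight-$2$ part of its source is the space of constant $2$-forms, so $\widehat\sigma=\eta|_{W}$ for a translation-invariant $2$-form $\eta$ on $\C^{2s}$. A direct computation of the contractions of $\eta$ against the generating vector fields — using that each component of $\mu$ is quadratic, so the ideal $I_{W}$ has no nonzero degree-$1$ element, and that every column $\underline a_i$ is nonzero — then shows that $T$-invariance of $\eta$ kills all off-diagonal terms, leaving $\eta=\sum_{i}\lambda_{i}\,dz_{i}\wedge dw_{i}$.

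Finally, horizontality of $\eta|_{W}$ means that for each $\xi$ there is $g(\xi)$ with $\lambda_{i}\langle\underline a_{i},\xi\rangle=\langle\underline a_{i},g(\xi)\rangle$ for all $i$; equivalently $(\lambda_{i}\langle\underline a_{i},\xi\rangle)_{i}\in(\Ker\underline A)^{\perp}$ for every $\xi$, which by $\Ker\underline A=\Image\overline B$ amounts to $\underline A\,\diag(\lambda_{1},\dots,\lambda_{s})\,\overline B=0$, i.e.\ the diagonal matrix $\diag(\lambda_1,\dots,\lambda_s)$ preserves $V:=\Ker\underline A\subseteq\R^{s}$. Since $M(\underline A)$ is connected, the configuration $\{\underline a_i\}$ admits no splitting of the index set into two nonempty blocks along which $V$ decomposes; comparing eigenspaces of the diagonal matrix then forces $\lambda_1=\dots=\lambda_s$. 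Hence $\eta$ is a scalar multiple of $\om_{\C}=\sum_i dz_i\wedge dw_i$ and $\sigma$ a scalar multiple of $\om$ on a dense open subset of $Y_A(0)_{\reg}$, hence everywhere; that $\om_{\C}|_{W}$ is itself $T$-basic, as it must be, reflects the identity $\sum_{i}\underline a_{i}(\bm b^{(i)})^{T}=\underline A\,\overline B=0$. The main obstacle is Proposition~\ref{prop:keyreflexive} — that $\mu_{\underline A}^{-1}(0)$ is a normal complete intersection smooth in codimension $3$ — since this is what allows Vetter's reflexiveness criterion to move an a priori arbitrary weight-$2$ $2$-form off the regular locus and onto $\C^{2s}$; granting it, the reduction of the second step and the interior-product and matroid-connectivity bookkeeping are routine.
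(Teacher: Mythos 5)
Your outline follows the paper's architecture quite closely: the product decomposition is Remark \ref{rem:decomposition}; the reduction to the simple case goes through the universal covering morphism $\vphi$ and the codimension bound of Remark \ref{rem:phicodim}; the passage from $2$-forms on the quotient to constant-coefficient $2$-forms on $\mu^{-1}(0)$ rests on Proposition \ref{prop:keyreflexive} and Vetter's criterion; and the endgame is the observation that horizontality makes $\diag(\lambda_1,\dots,\lambda_s)$ preserve $\Ker\underline A$, which together with connectivity of $M(\underline A)$ forces all $\lambda_i$ equal (this is exactly condition (iii) of the paper's Lemma \ref{thm:main final}-adjacent lemma on decomposability). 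The one genuine difference is that you stay at the GIT parameter $\alpha=0$ and work with the locus $W_0\subset\mu^{-1}(0)$ of free closed orbits over $Y_{\underline A}(0)_{\reg}$, whereas the paper first replaces $Y_{\underline A}(0)_{\reg}$ by the resolution $Y_{\underline A}(\underline\alpha)$ (Lemma \ref{lem:reduction to simple}, using semismallness and $\codim\Sing\geq4$) and then descends along the \emph{free} quotient $\mu^{-1}(0)^{\underline\alpha\text{-}st}\to Y_{\underline A}(\underline\alpha)$ via Brion's theorem, with the codimension bound supplied by the dedicated lemma that the $\alpha$-unstable locus has codimension $\geq2$ when $B$ is simple.

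Two points need repair. First, your assertion that $\codim_W(W\setminus W_0)\geq2$ "by Theorem \ref{thm:PW}" is not justified: that theorem stratifies $Y_A(0)$, not $\mu^{-1}(0)$, and the fibers of the categorical quotient jump over the deeper strata (the null fiber alone has codimension only $n-d$), while the locus of non-closed orbits is not controlled by the statement you cite. This is precisely the issue the paper's detour through generic $\underline\alpha$ is designed to avoid, so either import that detour or supply a separate fiber-dimension estimate. Second, the claim that "$T$-invariance of $\eta$ kills all off-diagonal terms" is false as stated: for $\overline{\calO}^{\min}_{A_{s-1}}$ all columns of $\underline A$ coincide, so every $dz_j\wedge dw_{j'}$ is $T$-invariant. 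What kills these terms is the horizontality (contraction) computation you describe in the same sentence — comparing $\iota_{V_\xi}\eta$ with the span of the $d\mu_i$, using that $I_W$ starts in degree $2$ and each $\underline a_i\neq0$ — so the mathematics is the paper's, but the attribution to invariance should be corrected, since otherwise your final horizontality analysis, which only treats the diagonal form $\sum_i\lambda_i\,dz_i\wedge dw_i$, never disposes of those invariant off-diagonal terms.
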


\begin{remark}
Let $(Y, \om)$ be a conical symplectic variety with weight $\ell$. Then, for any other conical symplectic form $\om'$, its weight is also automatically $\ell$ (cf.\ \cite[Lemma 2.1]{Nam:equi}). Thus, the above theorem says that if $A$ is block indecomposable, then any conical symplectic structure with any weight is only $\om$ up to scalar. 
\end{remark}
Recall for any $\alpha$, there is a natural (projective) birational morphism $\pi_\alpha : Y_A(\alpha) \to Y_A(0)$ which is an isomorphism on $\pi_\alpha^{-1}(Y_A(0)_{\reg})$ (cf.\ Remark \ref{rem:hypertoric is symplectic}). Thus, we have the following.   


\begin{corollary}\label{cor:generalhypertoric}
Fix any $\alpha\in\Z^d$. If $A$ is equivalent to (*), then the following gives a decomposition into irreducible conical symplectic varieties
\[(Y_A(\alpha), \om) \cong \prod_{i=1}^p{(\C^2, dz\wedge dw)}\times\prod_{m=1}^r{(Y_{A_m}(\alpha_m), \om_m)}.\]
In particular, $A$ is block indecomposable if and only if $Y_A(\alpha)$ is an irreducible symplectic variety. In this case, any (not necessarily symplectic) homogeneous 2-form on $Y_A(\alpha)_{\reg}$ with weight 2 is unique up to scalar. 

\end{corollary}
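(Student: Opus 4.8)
The plan is to deduce this from the already-established $\alpha=0$ case, Theorem \ref{thm:main final}, by transporting every assertion across the birational morphism $\pi_\alpha : Y_A(\alpha)\to Y_A(0)$. The decomposition itself requires no new work: for $A$ equivalent to $(*)$, Remark \ref{rem:decomposition} supplies the $\Cstar$-equivariant isomorphism of symplectic varieties $(Y_A(\alpha),\om)\cong\prod_{i=1}^p(\C^2,dz\wedge dw)\times\prod_{m=1}^r(Y_{A_m}(\alpha_m),\om_m)$ with $\alpha=\alpha_1\oplus\cdots\oplus\alpha_r$. So it only remains to determine when $Y_A(\alpha)$ is irreducible and to verify that each displayed factor is irreducible.

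For the block-indecomposable case, the key device is that by Remark \ref{rem:hypertoric is symplectic} the morphism $\pi_\alpha$ is an isomorphism over $Y_A(0)_{\reg}$. I would set $U_\alpha:=\pi_\alpha^{-1}(Y_A(0)_{\reg})$; since $\pi_\alpha|_{U_\alpha}:U_\alpha\xrightarrow{\sim}Y_A(0)_{\reg}$ and $Y_A(0)_{\reg}$ is smooth, $U_\alpha$ is a smooth open subset of $Y_A(\alpha)$, hence $U_\alpha\subseteq Y_A(\alpha)_{\reg}$; and $U_\alpha$ is dense because $\pi_\alpha$ is birational and $Y_A(0)_{\reg}$ is dense in $Y_A(0)$. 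Crucially this isomorphism is $\Cstar$-equivariant. Because $\Om^2_{Y_A(\alpha)_{\reg}}$ is a locally free, hence torsion-free, sheaf on the integral variety $Y_A(\alpha)_{\reg}$, restriction of global $2$-forms to the dense open $U_\alpha$ is injective; being $\Cstar$-equivariant it restricts to a $\C$-linear injection on weight-$2$ homogeneous parts, so the space of homogeneous weight-$2$ $2$-forms on $Y_A(\alpha)_{\reg}$ injects into that on $U_\alpha\cong Y_A(0)_{\reg}$. When $A$ is block indecomposable, Theorem \ref{thm:main final} says the latter space is one-dimensional, spanned by $\om$; since the standard form on $Y_A(\alpha)_{\reg}$ restricts to $\om$ under $\pi_\alpha$ (both are induced from the standard symplectic form on $\C^{2n}$, and $\pi_\alpha$ preserves the symplectic structure), the source is exactly one-dimensional. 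In particular every homogeneous weight-$2$ $2$-form, and a fortiori every such symplectic form, on $Y_A(\alpha)_{\reg}$ is a scalar multiple of $\om$, so $Y_A(\alpha)$ is irreducible.

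For the converse, if $A$ is not block indecomposable then $p\geq1$ or $r\geq2$, so the decomposition above is a nontrivial product $Y_A(\alpha)\cong Y_1\times Y_2$ of positive-dimensional symplectic varieties carrying homogeneous weight-$2$ forms $\om_1,\om_2$. Then $\om_1\oplus\om_2$ and $\om_1\oplus 2\om_2$ are two linearly independent homogeneous weight-$2$ symplectic forms, so $Y_A(\alpha)$ is not irreducible; this gives the ``only if'' direction and simultaneously shows that the displayed factors, namely $(\C^2,dz\wedge dw)$ and each $(Y_{A_m}(\alpha_m),\om_m)$ with $A_m$ indecomposable, are themselves irreducible.

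The routine points to watch are the injectivity of restriction of $2$-forms to a dense open and the density and smoothness of $U_\alpha$; these are elementary once one invokes Remark \ref{rem:hypertoric is symplectic}. The only genuinely substantive input is Theorem \ref{thm:main final}, and the role of $\pi_\alpha$ is purely to carry the weight-$2$ uniqueness statement from the affine cone $Y_A(0)$ up to the (generally non-affine) $Y_A(\alpha)$. I expect no real obstacle beyond keeping careful track of the $\Cstar$-equivariance, so that the comparison of $2$-forms respects the weight grading and the reduction to Theorem \ref{thm:main final} is exact.
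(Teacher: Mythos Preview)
Your proposal is correct and follows exactly the route the paper takes: the paper's entire argument for this corollary is the single sentence preceding it, namely that $\pi_\alpha:Y_A(\alpha)\to Y_A(0)$ is an isomorphism over $Y_A(0)_{\reg}$ (Remark \ref{rem:hypertoric is symplectic}), from which the corollary is declared to follow. You have simply spelled out the details the paper leaves implicit---the injectivity of restriction to the dense open $U_\alpha$, the $\Cstar$-equivariance, and the converse via the product decomposition---and all of these are handled correctly.
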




In the subsequent subsections, we will study the space of 2-forms on $Y_A(0)_{\reg}$ and give a proof of the above theorem. For the later convenience, we don't assume that $A$ is indecomposable until the latter part of subsection 7.3. 

\subsection{Reduction to simple and smooth cases}\hspace{2pt}\\
In this subsection, we compare the space of 2-forms on $Y_A(0)_{\reg}$ and the space of 2-forms on the simplification $Y_{\underline{A}}(0)_{\reg}$. More precisely, we have the following:  
\begin{lemma}\label{lem:reduction to simple}
Let $\underline{\alpha}$ be generic and $\pi_{\underline{\alpha}} : Y_{\underline{A}}({\underline{\alpha}}) \to Y_{\underline{A}}(0)$ be the resolution morphism. Then, $\pi_{\underline{\alpha}}$ and the universal covering morphism $\vphi : Y_{\underline{A}}(0) \to Y_A(0)$ (cf.\ Theorem \ref{thm:fundamentalgrp}) induce the following diagram: 
\[\begin{tikzcd}
\Gamma_2(Y_{\underline{A}}({\underline{\alpha}}), \Om^2_{Y_{\underline{A}}({\underline{\alpha}})})\ar[d, hook, "\wr"]&\\
\Gamma_2(Y_{\underline{A}}(0)_{\reg}, \Om^2_{Y_{\underline{A}}(0)_{\reg}})\ar[d, equal]&\\
\Gamma_2(\vphi^{-1}(Y_{{A}}(0)_{\reg}), \Om^2_{\vphi^{-1}(Y_{{A}}(0)_{\reg})})&\Gamma_2(Y_A(0)_{\reg}, \Om^2_{Y_A(0)_{\reg}})\ar[l, hook', "\vphi^*"]
\end{tikzcd},\]
where $\Gamma_2(Z, \Om^2_Z)$ denotes the space of homogeneous 2-forms on $Z$ with weight 2. Similarly, if we replace $\Gamma_2$ by $\Gamma$, the same conclusion will hold. 
\end{lemma}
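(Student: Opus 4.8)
The plan is to establish the three claimed identifications one at a time, moving from the resolution $Y_{\underline{A}}(\underline{\alpha})$ down to $Y_{\underline{A}}(0)$ and then across the covering map $\vphi$. The three arrows in the diagram are of genuinely different natures, so I would handle them separately.

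First, the top arrow: the restriction map $\Gamma_2(Y_{\underline{A}}(\underline{\alpha}),\Om^2)\hookrightarrow\Gamma_2(Y_{\underline{A}}(0)_{\reg},\Om^2)$. Since $\underline{\alpha}$ is generic, $\pi_{\underline{\alpha}}$ is a symplectic resolution, hence crepant, and by Remark \ref{rem:hypertoric is symplectic} it is an isomorphism over $Y_{\underline{A}}(0)_{\reg}$; thus $\pi_{\underline{\alpha}}^{-1}(Y_{\underline{A}}(0)_{\reg})\cong Y_{\underline{A}}(0)_{\reg}$ sits inside $Y_{\underline{A}}(\underline{\alpha})$ as an open subset. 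Because $\pi_{\underline{\alpha}}$ is semismall (as recalled in the proof of Proposition \ref{prop:codim4fundamental}), the complement $Y_{\underline{A}}(\underline{\alpha})\setminus\pi_{\underline{\alpha}}^{-1}(Y_{\underline{A}}(0)_{\reg})$ has codimension $\geq 2$ in the smooth variety $Y_{\underline{A}}(\underline{\alpha})$, so by Hartogs / normality every $2$-form on the open part extends uniquely, giving the isomorphism; homogeneity of weight $2$ is preserved since $\pi_{\underline{\alpha}}$ is $\Cstar$-equivariant.

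Second, the equality $\Gamma_2(Y_{\underline{A}}(0)_{\reg},\Om^2)=\Gamma_2(\vphi^{-1}(Y_A(0)_{\reg}),\Om^2)$. Here I would invoke Remark \ref{rem:phicodim}: $\vphi^{-1}(Y_A(0)_{\reg})$ is an open subset of $Y_{\underline{A}}(0)_{\reg}$ whose complement in $Y_{\underline{A}}(0)_{\reg}$ has codimension $\geq 2$. Since $Y_{\underline{A}}(0)_{\reg}$ is smooth, restriction of $2$-forms is an isomorphism onto its image by the same Hartogs-type extension argument, and the $\Cstar$-equivariance of $\vphi$ keeps track of the weight.

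Third, and this is the step I expect to be the main obstacle, the injectivity of $\vphi^*\colon\Gamma_2(Y_A(0)_{\reg},\Om^2)\hookrightarrow\Gamma_2(\vphi^{-1}(Y_A(0)_{\reg}),\Om^2)$. By Theorem \ref{thm:fundamentalgrp}, $\vphi$ restricted to $\vphi^{-1}(Y_A(0)_{\reg})$ is the universal (étale) cover of $Y_A(0)_{\reg}$ with deck group $G:=\Gamma/\Gamma\cap\T_\C^{d-(n-s)}$ acting freely. So $\vphi^*$ identifies $\Gamma_2(Y_A(0)_{\reg},\Om^2)$ with the $G$-invariant part of $\Gamma_2(\vphi^{-1}(Y_A(0)_{\reg}),\Om^2)$ — in particular it is injective. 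The only subtlety is checking that an étale covering induces an honest isomorphism $\vphi^*\Om^2_{Y_A(0)_{\reg}}\cong\Om^2_{\vphi^{-1}(Y_A(0)_{\reg})}$ compatibly with the gradings, which is standard since $\vphi$ is finite étale and $\Cstar$-equivariant (for the weight bookkeeping, note $\vphi$ is equivariant for $\sigma$ on both sides here, not $\sigma^m$, by Remark \ref{rem:universal} (1)). Finally, the parenthetical last sentence — replacing $\Gamma_2$ by $\Gamma$ (all global $2$-forms, or all degrees) — follows by running the identical three arguments degree by degree, since Hartogs extension and the étale-descent isomorphism are insensitive to the $\Cstar$-weight. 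I would write the proof so that the three displayed maps are treated in this order, citing Remark \ref{rem:hypertoric is symplectic}, Remark \ref{rem:phicodim}, and Theorem \ref{thm:fundamentalgrp} respectively, and remarking once at the end that nothing used the weight-$2$ hypothesis beyond equivariance.
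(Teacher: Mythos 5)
Your proposal is correct and follows essentially the same three-step argument as the paper's proof: Hartogs extension across a codimension-$\geq 2$ locus for the top arrow, Remark \ref{rem:phicodim} for the middle equality, and identification of $\Gamma_2(Y_A(0)_{\reg},\Om^2)$ with the deck-group invariants for the horizontal injection. The one point you should make explicit is that semismallness alone does not yield $\codim_{Y_{\underline{A}}(\underline{\alpha})}\pi_{\underline{\alpha}}^{-1}(\Sing(Y_{\underline{A}}(0)))\geq 2$; you also need $\codim\Sing(Y_{\underline{A}}(0))\geq 4$, which holds exactly because the simplification $Y_{\underline{A}}(0)$ is simple (Corollary \ref{cor:codim2sing}(1)) — this is where the passage to $Y_{\underline{A}}$ rather than $Y_A$ is actually used.
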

\begin{proof}
The vertical inclusion induced from the isomorphism $\pi_{\underline{\alpha}}|_{\pi_{\underline{\alpha}}^{-1}(Y_{\underline{A}}(0)_{\reg})}$ is an isomorphism. In fact, $\pi_{\underline{\alpha}}$ is a symplectic resolution, so it is semi-small (\cite[Lemma 2.7]{Kalstratify}). In particular, $\codim_{Y_{\underline{A}}({\underline{\alpha}})}{\pi_{\underline{\alpha}}^{-1}(\Sing(Y_{\underline{A}}(0)))}\geq \fr<1/2>\codim_{Y_{\underline{A}}(0)}{\Sing(Y_{\underline{A}}(0))}$. Since $Y_{\underline{A}}({\underline{\alpha}})$ is simple, we have $\codim_{Y_{\underline{A}}(0)}{\Sing(Y_{\underline{A}}(0))}\geq4$ (cf.\ Corollary \ref{cor:codim2sing} (1)). Thus, we have $\codim_{Y_{\underline{A}}({\underline{\alpha}})}{\pi_{\underline{\alpha}}^{-1}(\Sing(Y_{\underline{A}}(0)))}\geq2$. This proves that the vertical inclusion is an isomorphism. The vertical equality follows from $\codim_{Y_{\underline{A}(0)_{\reg}}}(Y_{\underline{A}(0)_{\reg}}-\vphi^{-1}(Y_A(0)_{\reg}))\geq2$ (cf.\ Remark \ref{rem:phicodim}). 
The horizontal injection is obvious since the right hand side is identified with the invariant part of the left hand side under the fundamental group action since $\vphi|$ is a quotient map by the free action.   
\end{proof}

In the next subsection, we will show that any 2-form on $Y_{\underline{A}}({\underline{\alpha}})$ is induced from some 2-form on $\mu_{\underline{A}}^{-1}(0)$. 

\subsection{Extension of a 2-form on $Y_A(\alpha)$ to a 2-form on $\mu^{-1}(0)$} \hspace{2pt}\\
In this subsection, for a given homogeneous 2-form $\sigma$ on $Y_A(\alpha)$, we consider when we can extend (uniquely) its pull-back $p^*\sigma$ on $\mu^{-1}(0)^{\alpha-st}$ to a homogeneous 2-form on $\mu^{-1}(0)$, where $p : \mu^{-1}(0)^{\alpha-st} \to Y_A(\alpha)$ is the free $\T_\C^d$-quotient morphism. One can see that $p$ and the natural inclusion $\mu^{-1}(0)^{\alpha-st}\subset \mu^{-1}(0)$ induces the following diagram: 
\begin{equation*}\tag{*}
\begin{tikzcd}
\Gamma(Y_A(\alpha), \Om^2_{Y_A(\alpha)})\ar[r, hook]&\Gamma(\mu^{-1}(0)^{\alpha-st}, p^*\Om^2_{Y_A(\alpha)})\ar[r, hook]&\Gamma(\mu^{-1}(0)^{\alpha-st}, \Om^2_{\mu^{-1}(0)^{\alpha-st}})\\
&&\Gamma(\mu^{-1}(0), \Om^2_{\mu^{-1}(0)})\ar[u, "r"].
\end{tikzcd}
\end{equation*}
Actually, 
 the first horizontal homomorphism is injective since $\Om^2_{Y_A(\alpha)}$ is locally free. The second horizontal homomorphism is injective since the smoothness of $p$ implies that the natural exact sequence  
\[\begin{tikzcd}p^*\Om^1_{Y_A(\alpha)} \ar[r]& \Om^1_{\mu^{-1}(0)^{\alpha-st}}\ar[r]& \Om^1_{\mu^{-1}(0)^{\alpha-st}/Y_A(\alpha)}\ar[r]&0
\end{tikzcd}\]
is also left exact, in particular, its wedge also is injective.  Moreover, these natural homomorphisms are commutative with conical $\Cstar$-action, so the degree 2 part of this diagram satisfies the same properties: 
\begin{equation*}\tag{**}
\begin{tikzcd}
\Gamma_2(Y_A(\alpha), \Om^2_{Y_A(\alpha)})\ar[r, hook]&\Gamma_2(\mu^{-1}(0)^{\alpha-st}, p^*\Om^2_{Y_A(\alpha)})\ar[r, hook]&\Gamma_2(\mu^{-1}(0)^{\alpha-st}, \Om^2_{\mu^{-1}(0)^{\alpha-st}})\\
&&\Gamma_2(\mu^{-1}(0), \Om^2_{\mu^{-1}(0)})\ar[u, "r_2"].
\end{tikzcd}
\end{equation*}

In this setting, the following is the main result in this subsection. 
\begin{proposition}\label{prop:keyreflexive}\hspace{2pt}
\begin{itemize}
\item[(1)] In the above diagrams (*) and (**), $r$ and $r_2$ are injective.  

\item[(2)] Moreover, if $Y_A(0)$ is simple, then $r$ and $r_2$ are isomorphisms. 
\item[(3)] We can describe $\Gamma_2(\mu^{-1}(0), \Om^2_{\mu^{-1}(0)})$ as the following:  
\[\Gamma_2(\mu^{-1}(0), \Om^2_{\mu^{-1}(0)})=\bigoplus_{1\leq k<\ell \leq n}{\C dz_k\wedge dz_\ell}\oplus\bigoplus_{1\leq k<\ell \leq n}{\C dw_k\wedge dw_\ell}\oplus\bigoplus_{1\leq j, j' \leq n}{\C dz_j\wedge dw_{j'}}.\]
\end{itemize}
\end{proposition}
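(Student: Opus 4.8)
The strategy is to push all of the analysis onto $Z:=\mu^{-1}(0)\subset\C^{2n}$, which is cut out by the $d$ entries of $\mu_A=\sum_j z_jw_j\bm a_j$. First I would record the structural facts about $Z$: since $\dim Y_A(0)=2(n-d)$ and the $\T_\C^d$-action on $Z^{\alpha-st}$ is free, $\dim Z=2n-d$, so $Z$ is a complete intersection, in particular Cohen--Macaulay; and $Z^{\alpha-st}$, being a $\T_\C^d$-torsor over the smooth irreducible variety $Y_A(\alpha)$, is a smooth dense irreducible open subset, so $Z$ is reduced and integral. The crucial geometric step is the computation of $\codim_Z\Sing Z$: using the slice description of Theorem \ref{thm:PW} (the transverse slice of $Z$ along $\mathring{Y}(A,0)^F$ is $\mu_{A_F}^{-1}(0)$, which is singular at the origin exactly when $A_F\ne 0$) together with a dimension count showing that the preimage in $Z$ of the stratum $\mathring{Y}(A,0)^F$ has codimension $3\rank F$, one obtains $\codim_Z\Sing Z\ge 3$ in general, with a rank-$1$ flat $F_k$ actually contributing iff $\ell_k\ge 2$ (its slice being the $A_{\ell_k-1}$-type fiber $\{z_iw_i\ \text{all equal}\}\subset\C^{2\ell_k}$, smooth iff $\ell_k=1$, cf.\ Corollary \ref{cor:codim2sing} and diagram $(*)$ of \S\ref{sec:universal}). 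Hence $\codim_Z\Sing Z\ge 4$ precisely when $Y_A(0)$ is simple; in particular, by Serre's criterion, $Z$ is always normal, which is part of assertion (1).

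For (3) I would use the conical grading directly. The functions $z_j,w_j$ have weight $1$, so the $2$-wedges $dz_i\wedge dz_j$, $dw_i\wedge dw_j$, $dz_i\wedge dw_j$ are exactly the weight-$2$ generators of the free module $\Om^2_{\C^{2n}}$, and $\Om^1_{\C^{2n}}$, $\Om^2_{\C^{2n}}$ have trivial weight-$0$ part. From the exact sequence $\Om^2_{\C^{2n}}|_Z\to\Om^2_Z\to 0$ whose kernel is generated, as an $\C[Z]$-module, by the elements $d\mu_k\wedge df$, and the fact that each $d\mu_k$ is homogeneous of weight $2$ (so these relations lie in weights $\ge 3$), the weight-$2$ part of $\Om^2_Z$ coincides with the weight-$2$ part of $\Om^2_{\C^{2n}}$, i.e.\ the $\C$-span of the $2$-wedges. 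As $Z$ is affine this span is $\Gamma_2(Z,\Om^2_Z)$, which is the stated formula.

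Assertions (1) and (2) then follow from Vetter's theorem on exterior powers of the module of differentials of a reduced complete intersection: $\Om^p_Z$ is torsion-free once $Z$ satisfies Serre's condition $R_p$ (regularity at every point of codimension $\le p$), and is reflexive once $Z$ satisfies $R_{p+1}$. Since $Z$ is always $R_2$ by the previous paragraph, $\Om^2_Z$ is torsion-free; as $Z$ is integral with $Z^{\alpha-st}$ a dense open, a global $2$-form vanishing on $Z^{\alpha-st}$ is a torsion section, hence zero, which is the injectivity of $r$, and $r_2$ is injective as its restriction to the weight-$2$ graded piece. When $Y_A(0)$ is simple, $Z$ is $R_3$, so $\Om^2_Z$ is reflexive; since $Z$ is normal and $\codim_Z(Z\setminus Z^{\alpha-st})\ge 2$ (from the stratification of Theorem \ref{thm:PW}, cf.\ also the description of the semistable locus in Remark \ref{rem:semistable affine}), a reflexive sheaf on $Z$ satisfies Hartogs' extension, so $\Gamma(Z,\Om^2_Z)\to\Gamma(Z^{\alpha-st},\Om^2_{Z^{\alpha-st}})$ is an isomorphism, and taking weight-$2$ parts gives the same for $r_2$.

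The main obstacle is the codimension estimate for $\Sing Z$: one has to pin down exactly which flats force a singularity of $Z$, and with what codimension, via the slices $\mu_{A_F}^{-1}(0)$ and the identification of the rank-$1$ slices with $A$-type fibers from diagram $(*)$ of \S\ref{sec:universal}; it is this that cleanly separates the simple case ($\codim\ge 4$, hence reflexivity) from the general one ($\codim\ge 3$, hence torsion-freeness only). A secondary point requiring care is matching Vetter's hypotheses to our global, reduced, equidimensional complete intersection and keeping the numerology straight (torsion-freeness versus reflexivity of $\Om^2$ needing $R_2$ versus $R_3$).
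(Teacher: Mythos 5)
Your overall architecture is the same as the paper's: establish that $Z=\mu^{-1}(0)$ is a normal irreducible complete intersection, bound $\codim_Z\Sing Z$ from below ($\ge 3$ in general, $\ge 4$ in the simple case), feed this into Vetter's criterion to get torsion-freeness resp.\ reflexivity of $\Om^2_Z$, extend across the unstable locus using $\codim_Z Z^{\alpha-us}\ge 2$, and compute the weight-$2$ part of $\Om^2_Z$ by observing that the relations $d\mu_k\wedge df$ live in weight $\ge 3$. Parts (1) and (3) and the reflexive-extension mechanism in (2) are fine.

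The genuine gap is in how you propose to prove the codimension estimate for $\Sing Z$, which you yourself identify as the crux. First, the claimed formula that ``the preimage in $Z$ of the stratum $\mathring{Y}(A,0)^F$ has codimension $3\rank F$'' is false: for $\overline{\calO}^{\min}_{A_2}$ (so $B^T=[\bm{e_1},\bm{e_2},-\bm{e_1}-\bm{e_2}]$), $\Sing\mu^{-1}(0)$ is the origin, of codimension $5$ in the $5$-dimensional hypersurface $\{\sum z_jw_j=0\}$, while $3\rank([3])=6$; and the actual preimage of the origin of $S_{A_{\ell-1}}$ under $\mu^{-1}(0)\to Y_A(0)$ (the null cone) has codimension $1$, not $3$. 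Second, and more structurally, Theorem \ref{thm:PW} is a slice theorem for the quotient at points with \emph{closed} orbits, whereas $\Sing\mu^{-1}(0)$ contains many points with non-closed orbits (e.g.\ in the $A_{\ell-1}$ fiber $\{z_1w_1=\cdots=z_\ell w_\ell\}$, points with $z_i=w_i=z_j=w_j=0$ but some $z_k\ne0$, $w_k=0$), so the slice description cannot by itself control the codimension of the full singular locus of $Z$. The paper instead argues directly on $\C^{2n}$: non-surjectivity of $d\mu_{(\bm p,\bm q)}=(q_1\bm{a_1},\dots,p_n\bm{a_n})$ forces some $p_{i_0}=q_{i_0}=0$; since $\bm{b_{i_0}}\ne0$, Gale duality makes $A^{(i_0)}$ still surjective, producing a second index $i_1$ with $p_{i_1}=q_{i_1}=0$; then $\Sing Z\subset(\mu^{(i_0,i_1)})^{-1}(0)$ has dimension $2(n-2)-\dim\Span(\bm{a_j}:j\ne i_0,i_1)\le 2n-d-3$, and equals $2n-d-4$ when $\bm{b_{i_0}},\bm{b_{i_1}}$ are non-parallel, i.e.\ when $B$ is simple. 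You need an argument of this kind in place of the stratification count. Relatedly, the inequality $\codim_Z Z^{\alpha-us}\ge2$ is \emph{not} a consequence of the stratification or of Remark \ref{rem:semistable affine} alone: it genuinely uses simpleness (for $S_{A_1}$ with $\alpha>0$ the unstable locus $\{z_1=z_2=0\}$ has codimension $1$), and the paper proves it by the same two-index/Gale-duality device applied to the explicit description of the unstable locus; your proposal asserts it without a proof.
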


To prove this Proposition, we need some lemmas. 
In \cite[Lemma 2.12, the proof of Proposition 2.13]{Nag}, we noted $\mu^{-1}(0)$ is a normal irreducible complete intersection, in particular, the codimension of the singular locus of $\mu^{-1}(0)$ is at least 2 (also, see \cite[section 6]{HS}, \cite[Lemma 4.7]{BK}). We can prove a more refined version as the following. Recall we assume that $\bm{b_j}\neq0$ for any $j=1, \ldots ,n$ (cf.\ Assumption \ref{ass:1})
\begin{proposition}\label{prop:codim}\hspace{2pt}

\begin{itemize}
\item[(1)]$\mu^{-1}(0)$ is a normal irreducible complete intersection of dim $2n-d$. 
\item[(2)]We have $\codim \Sing(\mu^{-1}(0))\geq3$. Moreover, if $B$ is simple, then we have $\codim \Sing(\mu^{-1}(0))\geq4$.
\end{itemize}
\end{proposition}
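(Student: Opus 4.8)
The plan is to analyze $\mu^{-1}(0)$ through the stratification of $\C^{2n}$ by coordinate vanishing and to translate the resulting dimension counts into matroid data via Gale duality (Lemma \ref{lem:Gale}). For $T\subseteq[n]$ let $W_T:=\{(\bm z,\bm w)\in\C^{2n}\mid (z_j,w_j)\neq\bm 0 \text{ exactly for } j\in T\}$, a locally closed subset of dimension $2|T|$, and set $Z^T:=\mu^{-1}(0)\cap W_T$. On $W_T$ one has $\mu(\bm z,\bm w)=\sum_{j\in T}z_jw_j\bm a_j\in V_T:=\Span_\C(\bm a_j\mid j\in T)$, and the differential of this restricted map at any point of $W_T$ spans all of $V_T$, because $\{w_j\bm a_j,\,z_j\bm a_j\}_{j\in T}$ spans $\Span(\bm a_j\mid j\in T)$ once $(z_j,w_j)\neq\bm 0$ for every $j\in T$. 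Hence $W_T\to V_T$ is a submersion whose fibre over $0$ is nonempty (take $z_j=0,\ w_j=1$ for $j\in T$), so $Z^T$ is smooth and pure of dimension $2|T|-\dim V_T=2|T|-\rank(A|_T)$.

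First I would prove (1). Since $\rank A=d$ one has $\rank(A|_T)\geq d-(n-|T|)$, whence $\dim Z^T\leq 2|T|-(d-(n-|T|))=n+|T|-d\leq 2n-d$, with equality only for $T=[n]$. As $\mu^{-1}(0)=\bigsqcup_T Z^T$ as sets, $\dim\mu^{-1}(0)\leq 2n-d$; on the other hand $\mu^{-1}(0)$ is cut out by the $d$ components of $\mu$ in $\C^{2n}$, so every irreducible component has dimension $\geq 2n-d$ by Krull's height theorem. Therefore $\mu^{-1}(0)$ is equidimensional of dimension $2n-d$, hence a complete intersection, hence Cohen--Macaulay, hence $S_2$. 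It is connected, since the $\Cstar$-action $s\cdot(\bm z,\bm w)=(s\bm z,s\bm w)$ preserves $\mu^{-1}(0)$ and contracts it to the origin. Granting the codimension bound of (2), which gives $R_1$, Serre's criterion shows $\mu^{-1}(0)$ is normal, and a connected normal variety is irreducible.

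For (2), since $\mu^{-1}(0)$ is a complete intersection its singular locus is exactly the Jacobian-degeneracy locus, i.e.\ $\Sing(\mu^{-1}(0))=\bigsqcup_{T:\,\rank(A|_T)<d}Z^T$. Put $U:=[n]\setminus T$ and $\rho:=\dim\Span(\bm b_i\mid i\in U)$. By Lemma \ref{lem:Gale}, $\rank(A|_T)=d-(|U|-\rho)$, so $\rank(A|_T)<d$ iff $|U|>\rho$, i.e.\ iff $\{\bm b_i\}_{i\in U}$ is linearly dependent, and then
\[
\codim_{\mu^{-1}(0)}Z^T=(2n-d)-\bigl(2|T|-\rank(A|_T)\bigr)=2|U|-(|U|-\rho)=|U|+\rho .
\]
The minimum of $|U|+\rho$ over dependent $U$ is attained on a circuit $C$ of $M(B^T)$ (any dependent set contains one, and passing to it strictly decreases $|U|$ without increasing $\rho$), where $\rho=|C|-1$; hence $\codim\Sing(\mu^{-1}(0))=2\,\mathrm{girth}(M(B^T))-1$. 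By Assumption \ref{ass:1} there are no zero rows, so $\mathrm{girth}\geq 2$ and we obtain $\geq 3$ in general; if $B$ is simple then no two rows are parallel (Definition \ref{def:simple}), so no circuit has size $\leq 2$, $\mathrm{girth}\geq 3$, and we obtain $\codim\Sing(\mu^{-1}(0))\geq 5\geq 4$. (Alternatively, without invoking circuits: for dependent $U$ one has $\rho\geq 1$ by Assumption \ref{ass:1} and $|U|\geq\rho+1$, so $|U|+\rho\geq 3$; if $B$ is simple then also $\rho\geq 2$, so $|U|+\rho\geq 5$.)

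The only genuinely delicate point is the submersion statement behind the dimension formula for $Z^T$: one must verify that on the stratum $W_T$ the differential of $\mu$ already surjects onto $V_T=\Span(\bm a_j\mid j\in T)$, so that $Z^T$ has the expected dimension $2|T|-\rank(A|_T)$ and no excess components arise. Everything after that is bookkeeping with Gale duality and Serre's criterion. I would also note that this argument simultaneously recovers and refines the "codimension $\geq 2$" facts already used (e.g.\ in Remark \ref{rem:phicodim} and in \cite{HS}).
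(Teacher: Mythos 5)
Your proof is correct, and it takes a genuinely different (and more refined) route than the paper. For part (1) the paper simply cites \cite{HS}, \cite{BK} and \cite{Nag}, whereas you give a self-contained argument via the stratification $\mu^{-1}(0)=\bigsqcup_T Z^T$ by coordinate-vanishing patterns, the submersion property of $\mu|_{W_T}$ onto $V_T$, Krull's height theorem, and Serre's criterion; this is sound (the one point worth making explicit is that generic reducedness of the complete-intersection scheme, which you need before identifying $\Sing$ with the Jacobian-degeneracy locus, itself follows from your codimension estimate together with $S_1$). For part (2) the paper argues by "peeling off two coordinates": at a singular point some $(p_{i_0},q_{i_0})$ vanishes, Gale duality and $\bm{b_{i_0}}\neq0$ force a second vanishing pair $(p_{i_1},q_{i_1})$, and the singular locus is then covered by the finitely many loci $(\mu^{(i_0,i_1)})^{-1}(0)$, whose dimension is bounded using part (1) for the smaller moment map. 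You instead compute the exact dimension of every singular stratum and translate it through Gale duality into $\codim_{\mu^{-1}(0)}Z^T=|U|+\rho$, obtaining the closed formula $\codim\Sing(\mu^{-1}(0))=2\,\mathrm{girth}(M(B^T))-1$. This buys more than the paper states: in the simple case you actually get codimension $\geq5$ rather than $\geq4$ (consistent, e.g., with $\overline{\calO}^{\min}_{A_2}$, where $\mu^{-1}(0)\subset\C^6$ is a quadric cone singular only at the origin). Both arguments hinge on the same Gale duality lemma; yours is longer but yields sharp, reusable information about every stratum, while the paper's is a quicker covering argument sufficient for the bounds it needs.
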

\begin{proof}
(1) \ This is already proved in, for example,  \cite[section 6]{HS}, \cite[Lemma 4.7]{BK}, and  \cite[{Lemma 2.12}, the proof of Proposition 2.13]{Nag}. 

(2) \  If $(\bm{p}, \bm{q})\in\Sing({\mu}^{-1}(0))$, then the tangent map $d\mu_{(\bm{p}, \bm{q})}=(q_1\bm{a_1}, \ldots ,q_n\bm{a_n}, p_1\bm{a_1}, \ldots ,p_n\bm{a_n})$ is not surjective. However $A$ is surjective, so there exists some $i_0$ such that $p_{i_0}=q_{i_0}=0$. By the assumption $\bm{b_{i_0}}\neq0$ and Gale duality (cf. Lemma \ref{lem:Gale}), $A^{(i_0)} : \Z^{n-1} \to \Z^d$ is also surjective, where $A^{(i_0)}$ is the submatrix of $A$ obtained by deleting the $i_0$-th columns. Thus, by the same argument, there exists $i_1\neq i_0$ such that $p_{i_1}=q_{i_1}=0$. In other words, 
\[(\bm{p}, \bm{q})\in(\mu^{-1}(0))\cap\{z_{i_0}=w_{i_0}=0\}\cap\{z_{i_1}=w_{i_1}=0\}=(\mu^{(i_0, i_1)})^{-1}(0),\]
where we identify as $\C^{2(n-2)}=\C^{2n}\cap\{z_{i_0}=w_{i_0}=0\}\cap\{z_{i_1}=w_{i_1}=0\}$, and $\mu^{(i_0, i_1)} : \C^{2(n-2)} \to V_\C^{(i_0, i_1)}:=V^{(i_0, i_1)}\otimes_\Z \C$ is the moment map associated to the surjection $A^{(i_0, i_1)} : \Z^{n-2} \twoheadrightarrow V^{(i_0, i_1)}:=\Span (\bm{a_j} \ | \ j\neq i_0, i_1)$. Then, by (1), we have
\[\dim (\mu^{(i_0, i_1)})^{-1}(0)=2(n-2)-\dim V_\C^{(i_0, i_1)}.\] 
Since we have $\dim V_\C^{(i_0, i_1)}\geq d-1$, we deduce that $\codim \Sing(\mu^{-1}(0))\geq3$. 
Moreover if $B$ is simple, then $\bm{b_{i_0}}$ and $\bm{b_{i_1}}$ are not parallel vectors, i.e., $\dim \Span(\bm{b_{i_0}}, \bm{b_{i_1}})=2$. By Gale duality (cf. Lemma \ref{lem:Gale}), this implies that $\dim V_\C^{(i_0, i_1)}=\dim \Span (\bm{a_j} \ | \ j\neq i_0, i_1)=d$, i.e., $V_\C^{(i_0, i_1)}=\C^d$. Then, by the same argument above, we have $\codim \Sing(\mu^{-1}(0))\geq4$.   
\end{proof}

To prove Proposition \ref{prop:keyreflexive} (1) and (2), the following criterion due to Vetter will be the key tool.  
 
\begin{theorem}{\rm (\cite{Vetter})}\label{thm:Vetter}\\
Let $Z$ be a normal irreducible local complete intersection. Then,  
\begin{itemize}
\item[(1)]$\Om_Z^p$ : torsion free \ $\Leftrightarrow$ \ $\min\{\codim_Z{\Sing(Z)}, \dim Z\}> p$.
\item[(2)]$\Om_Z^p$ : reflexive \ $\Leftrightarrow$ \ $\min\{\codim_Z{\Sing(Z)}, \dim Z\}> p+1$.
\end{itemize}
\end{theorem}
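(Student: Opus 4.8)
The plan is to reduce both equivalences to a local computation of depth and then to bound the depth of $\Om^p_Z$ using the local-complete-intersection structure. First I would invoke two standard commutative-algebra facts: a coherent sheaf on an integral Noetherian scheme is torsion free if and only if it satisfies Serre's condition $(S_1)$, and a coherent sheaf on a \emph{normal} Noetherian scheme is reflexive if and only if it satisfies $(S_2)$ (the nontrivial implication here uses that a normal scheme is regular in codimension one, so the sheaf is automatically locally free there). It therefore suffices to bound $\operatorname{depth}(\Om^p_Z)_x$ from below at every point $x \in Z$. On the regular locus $\Om^p_Z$ is locally free, so its depth equals the local dimension and there is nothing to check; the whole issue is concentrated along $\Sigma := \Sing(Z)$, and I set $s := \codim_Z \Sigma$, so that $\dim \mathcal{O}_{Z,x} \ge s$ for every $x \in \Sigma$.

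The key step is the depth estimate along $\Sigma$. Since $Z$ is a local complete intersection I would work locally, embedding $Z$ into a smooth $W$ with ideal sheaf $\mathcal{I}$ generated by a regular sequence of length $c = \codim_W Z$; then $\mathcal{N}^{\vee} := \mathcal{I}/\mathcal{I}^2$ is locally free of rank $c$ on $Z$, and --- because $Z$ is reduced and, being in characteristic $0$, generically smooth --- the conormal sequence
\[
0 \longrightarrow \mathcal{N}^{\vee} \stackrel{\phi}{\longrightarrow} \Om^1_W|_Z \longrightarrow \Om^1_Z \longrightarrow 0
\]
is short exact (injectivity of $\phi$ holds because its source is torsion free and $\phi$ is generically injective by a rank count). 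Thus $\Om^1_Z$ is the cokernel of an injection $\phi$ between locally free sheaves whose degeneracy locus is exactly $\Sigma$, so $V(I_t(\phi)) \subseteq \Sigma$ for all $t \le c$, and Cohen--Macaulayness of $Z$ gives $\operatorname{grade} I_t(\phi) = \operatorname{ht} I_t(\phi) \ge s$. Next I would use the length-$p$ complex $L_{\bullet}$ of locally free sheaves with $L_i = \wedge^{p-i}(\Om^1_W|_Z) \otimes \Sym^{i}(\mathcal{N}^{\vee})$ that resolves $\wedge^{p}(\operatorname{coker}\phi) = \Om^p_Z$ (Lebelt's resolution of exterior powers of a cokernel; cf.\ the Buchsbaum--Eisenbud acyclicity criterion and Bruns--Vetter's treatment). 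The determinantal ideals of the differentials of $L_{\bullet}$ are supported on $\Sigma$, hence have grade $\ge s$; when $s > p$ the Buchsbaum--Eisenbud criterion is met and $L_{\bullet}$ is a finite free resolution of $\Om^p_Z$. By the Auslander--Buchsbaum formula and Cohen--Macaulayness,
\[
\operatorname{depth}(\Om^p_Z)_x \;=\; \dim \mathcal{O}_{Z,x} - \operatorname{pd}_x\Om^p_Z \;\ge\; \dim \mathcal{O}_{Z,x} - p \;\ge\; s - p
\]
for every $x \in \Sigma$. Together with the regular-locus case this shows $\Om^p_Z$ satisfies $(S_1)$ as soon as $\min\{s,\dim Z\} > p$ and $(S_2)$ as soon as $\min\{s,\dim Z\} > p+1$, which by the first paragraph is exactly torsion freeness, respectively reflexivity.

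For the converse implications I would argue as follows. If $\dim Z \le p$, then $\Om^p_Z$ vanishes at the generic point but is nonzero along $\Sigma$, since the fibre dimension of $\Om^1_Z$ jumps there; hence $\Om^p_Z$ has torsion and is neither torsion free nor reflexive. If instead $s \le p$ (respectively $s \le p+1$), the acyclicity of $L_{\bullet}$ fails and one finds $\operatorname{pd}_x\Om^p_Z > p$ (respectively $\operatorname{depth}(\Om^p_Z)_x \le 1$) at the generic point $x$ of a minimal component of $\Sigma$, contradicting $(S_1)$ (respectively $(S_2)$). This is clearest in the hypersurface case $c = 1$, where $\phi$ is multiplication by a single regular function, and it propagates to general $c$ through the same exterior-power complexes; I would also note that for regular $Z$ the equivalences are immediate in the relevant range $p < \dim Z$, so one may assume $Z$ singular throughout.

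The hard part will be the acyclicity of $L_{\bullet}$: one must check that the elementary grade bound $\operatorname{grade} I_t(\phi) \ge s$ --- immediate from $V(I_t(\phi)) \subseteq \Sigma$ and the Cohen--Macaulayness of $Z$ --- feeds into the Buchsbaum--Eisenbud criterion for the Lebelt complex precisely as required, i.e.\ that the degeneracy ideals of the differentials of $L_{\bullet}$ are genuinely controlled by the $I_t(\phi)$. This structural input is where the whole depth estimate lives. By comparison the converse direction is soft, though it still requires exhibiting an explicit torsion element (or a nonvanishing local cohomology class) of $\Om^p_Z$ in the two boundary cases.
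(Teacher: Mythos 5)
The paper does not actually prove this statement: it is quoted from Vetter's 1970 article and used as a black box (and only the sufficiency direction is ever invoked, in the proof of Proposition \ref{prop:keyreflexive}), so there is no in-paper argument to compare yours against. Your proof of the implication ``$\min\{\codim_Z\Sing(Z),\dim Z\}>p$ (resp.\ $>p+1$) $\Rightarrow$ $\Om^p_Z$ torsion free (resp.\ reflexive)'' is essentially the standard route and is sound: reduce to $(S_1)$, resp.\ $(S_2)$; use the left-exact conormal sequence of the local complete intersection; resolve $\Om^p_Z$ by the length-$p$ Lebelt complex, whose acyclicity follows from the Buchsbaum--Eisenbud criterion because the determinantal ideals of its differentials all cut out subsets of $\Sing(Z)$ and hence have grade $\geq\codim_Z\Sing(Z)>p$ over the Cohen--Macaulay ring; conclude with Auslander--Buchsbaum. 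This is the only part of the theorem the paper needs.

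The genuine gap is in your converse. The step ``the acyclicity of $L_{\bullet}$ fails and one finds $\operatorname{pd}_x\Om^p_Z>p$'' is a non sequitur: failure of a sufficient criterion establishes nothing, and even granting $\operatorname{pd}_x\Om^p_Z>p$ at the generic point $x$ of a codimension-$s$ component of $\Sing(Z)$ with $s\le p$, Auslander--Buchsbaum forces any \emph{finite} projective dimension to be at most $\operatorname{depth}\mathcal{O}_{Z,x}=s\le p$, so your inequality only says $\operatorname{pd}_x=\infty$, which places no upper bound on $\operatorname{depth}(\Om^p_Z)_x$ and hence does not contradict $(S_1)$ or $(S_2)$. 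To prove the converse one must actually exhibit nonzero torsion (equivalently, nonvanishing homology of the relevant complex) supported on $\Sing(Z)$; this is the substantive computation in Vetter's paper and is not supplied by ``it propagates from the hypersurface case.'' Separately, your handling of the $\dim Z$ term is wrong at the boundary: for $\dim Z=p$ the sheaf $\Om^p_Z$ does \emph{not} vanish at the generic point (it has generic rank $1$), and for smooth $Z$ of dimension $p$ it is invertible, hence reflexive, so the ``only if'' direction as literally stated is false in that case and requires the usual caveat that $Z$ be singular. None of this affects the paper's applications, which use only the sufficiency direction, but as a proof of the stated biconditional the proposal is incomplete.
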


Finally, we need the following lemma. 

\begin{lemma}
When $Y_A(0)$ is simple, for any $\alpha\in\Z^d$ (not necessarily generic), the $\alpha$-unstable locus $\mu^{-1}(0)^{\alpha-us}:=\mu^{-1}(0)-\mu^{-1}(0)^{\alpha-ss}$ satisfies  
\[\codim_{\mu^{-1}(0)}\mu^{-1}(0)^{\alpha-us}\geq2.\]
\end{lemma}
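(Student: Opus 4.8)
The plan is to show that the $\alpha$-unstable locus $\mu^{-1}(0)^{\alpha\text{-us}}$ has codimension $\geq 2$ in $\mu^{-1}(0)$ by decomposing it into pieces indexed by the combinatorics of which coordinates vanish, and estimating the dimension of each piece. Recall the explicit description of semistability from Remark \ref{rem:semistable affine}: a point $(\bm{z},\bm{w})\in\mu^{-1}(0)$ is $\alpha$-semistable if and only if $\alpha\in\sum_{i:z_i\neq 0}\Q_{\geq 0}\bm{a_i}+\sum_{i:w_i\neq 0}\Q_{\geq 0}(-\bm{a_i})$. So a point is $\alpha$-unstable exactly when $\alpha$ fails to lie in that cone, and whether a point is unstable depends only on the pair of supports $I=\{i:z_i\neq 0\}$, $J=\{i:w_i\neq 0\}$. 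Hence I would write
\[
\mu^{-1}(0)^{\alpha\text{-us}}=\bigsqcup_{(I,J)\ \text{bad}}\ \Set{(\bm{z},\bm{w})\in\mu^{-1}(0)\ |\ \{i:z_i\neq 0\}=I,\ \{i:w_i\neq 0\}=J},
\]
where a pair $(I,J)$ is called \emph{bad} if $\alpha\notin\sum_{i\in I}\Q_{\geq 0}\bm{a_i}+\sum_{i\in J}\Q_{\geq 0}(-\bm{a_i})$. It therefore suffices to show each stratum $S_{I,J}$ has $\dim S_{I,J}\leq \dim\mu^{-1}(0)-2=2n-d-2$.

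The key point for bounding $\dim S_{I,J}$ is that on such a stratum the moment map equation $\mu(\bm{z},\bm{w})=\sum z_iw_i\bm{a_i}=0$ only involves indices $i\in I\cap J$ (since $z_iw_i=0$ unless $i\in I\cap J$), so $S_{I,J}$ fibers over the locus in $(\Cstar)^{I\cap J}$ cut out by the linear relations among the $\bm{a_i}$, $i\in I\cap J$, with the remaining coordinates ($z_i$ for $i\in I\setminus J$, $w_i$ for $i\in J\setminus I$) free in $\Cstar$ and the rest zero. Concretely, let $r=\dim_\C\Span_\C(\bm{a_i}\mid i\in I\cap J)$; then the relation $\sum_{i\in I\cap J}z_iw_i\bm{a_i}=0$ imposes $r$ independent conditions on the $|I\cap J|$ products $z_iw_i$, so
\[
\dim S_{I,J}\leq |I\setminus J|+|J\setminus I|+2|I\cap J|-r=|I|+|J|-r.
\]
Now I must argue $|I|+|J|-r\leq 2n-d-2$ whenever $(I,J)$ is bad, using the hypothesis that $Y_A(0)$ is simple. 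Since the cone $\sum_{i\in I}\Q_{\geq 0}\bm{a_i}+\sum_{i\in J}\Q_{\geq 0}(-\bm{a_i})$ fails to contain $\alpha$, in particular it is a proper subset of $\Q^d$ (in the $\alpha\neq 0$ case — the $\alpha=0$ case is already covered by Remark \ref{rem:phicodim}, so assume $\alpha\neq 0$), which forces the $\bm{a_i}$, $i\in I\cup J$, to span a proper cone, hence $I\cup J\neq[n]$ or the cone is not all of $\Q^d$; either way I get $\dim\Span_\R(\bm{a_i}\mid i\in I\cup J)\leq d$ with the cone spanned being proper. The most efficient route is: badness implies there exists a linear functional separating $\alpha$ from the cone, which translates (via Gale duality, Lemma \ref{lem:Gale}) into the statement that some nonzero $\bm{b_i}$'s among $i\notin I\cap J$ are forced; simplicity then prevents two of these from being parallel, gaining the needed extra codimension exactly as in the proof of Proposition \ref{prop:codim}(2). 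Combining the dimension count for $S_{I,J}$ with $\dim\mu^{-1}(0)=2n-d$ and the codimension-$\geq 2$ estimate coming from "at least two distinct indices are forced to vanish and their $\bm{a}$-vectors, being Gale-dual to non-parallel $\bm{b}$-vectors, are linearly independent," yields $\codim S_{I,J}\geq 2$.

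The main obstacle I anticipate is the bookkeeping in the final codimension estimate: one must translate "$(I,J)$ bad" into a genuinely quantitative statement about how many coordinates must vanish and about the rank $r$, uniformly over all bad pairs, and then check that simplicity buys the jump from codimension $1$ (which would fail, e.g., for a non-simple $A_1$-type factor where the unstable locus can be a divisor) to codimension $2$. I would handle this by reducing to the minimal bad pairs — those $(I,J)$ maximal with respect to the property of being bad — for which the separating-hyperplane/Gale-duality dictionary is cleanest, and by noting that for the estimate it is enough that \emph{at least one} pair of forced-to-vanish indices has linearly independent $\bm a$-vectors, which is precisely what simplicity guarantees. The compatibility of all of this with the conical $\Cstar$-action is automatic since the stratification is $\Cstar$-stable.
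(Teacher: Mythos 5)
Your strategy is essentially the paper's: both arguments start from the explicit description of the unstable locus, deduce that the indices $i$ with $z_iw_i\neq 0$ span a proper subspace of $\R^d$ (so at least two indices $i_0\neq i_1$ must have a vanishing coordinate), and then use simplicity together with Gale duality to upgrade codimension $1$ to codimension $2$. The difference is packaging. The paper avoids your stratum-by-stratum bookkeeping entirely: it covers the unstable locus by the finitely many closed sets $\mu^{-1}(0)\cap\{z_{i_0}^{(\vep_0)}=z_{i_1}^{(\vep_1)}=0\}=\mu_{A^{(i_0,i_1)}}^{-1}(0)\times\C^2$, and since simplicity forces $\dim\Span(\bm{b_{i_0}},\bm{b_{i_1}})=2$, Gale duality makes $A^{(i_0,i_1)}$ surjective, so the complete-intersection statement of Proposition \ref{prop:codim}(1) gives each such set dimension $2(n-2)-d+2=2n-d-2$ at once. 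Your support stratification also works: with $K=I\cap J$, $L=I\cup J$ and $r=\dim\Span(\bm{a_i}\mid i\in K)$ one gets $2n-d-(|I|+|J|-r)=(n-|L|)+\dim\Span(\bm{b_i}\mid i\in K^c)\geq 2$, using that badness forces $|K^c|\geq 2$ and simplicity forces those $\bm{b_i}$ to span a plane; but this is exactly the "uniform over all bad pairs" bookkeeping you flagged as the obstacle, and you would need to carry it out.

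One correction to your final step: Gale duality does \emph{not} give that $\bm{a_{i_0}},\bm{a_{i_1}}$ are linearly independent (they need not be; one could even be zero), and linear independence of these two vectors is not what the estimate requires. The correct dual statement is that $\bm{b_{i_0}},\bm{b_{i_1}}$ being non-parallel is equivalent to $\Span(\bm{a_j}\mid j\neq i_0,i_1)=\R^d$, i.e.\ to the surjectivity of $A^{(i_0,i_1)}$; that is the statement that buys the extra codimension.
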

\begin{proof}
The idea is similar to the proof of the above Proposition. We use the same notation as that proof except the coordinates of $\C^{2n}$ denoted by $(z_1^{(+)}, \ldots, z_n^{(+)}, z_1^{(-)}, \ldots, z_n^{(-)})$. First, recall the description of the unstable locus 
\[(\mu^{-1}(0))^{\alpha-us}=\Set{(\bm{z^{(+)}}, \bm{z^{(-)}}) \in \mu^{-1}(0) \ | \ \alpha \notin \sum_{i: z^{(+)}_i \neq0}{\R_{\geq0}\bm{a_i}} + \sum_{i:z^{(-)}_i\neq0}{\R_{\geq0}(-\bm{a_i})}}.\]
In particular, if $(\bm{p^{(+)}}, \bm{p^{(-)}})\in(\mu^{-1}(0))^{\alpha-us}$, then 
\[\alpha\notin \sum_{j \ : \ p_j^{(+)}p_j^{(-)}\neq0}{\R\bm{a_j}}\varsubsetneq\R^d.\]
Then, by the similar argument as the proof of the above Proposition, simpleness of $B$ implies the surjectivity of $A^{(i_0, i_1)}$ for any different $i_0, i_1$. Hence, there exists $i_0\neq i_1$ and $\vep_0, \vep_1\in\{+, -\}$ such that 
\[(\bm{p^{(+)}}, \bm{p^{(-)}})\in\mu^{-1}(0)\cap\{z_{i_0}^{(\vep_0)}=z_{i_1}^{(\vep_1)}=0\}=\mu_{A^{(i_0, i_1)}}^{-1}(0)\times\C_{z_{i_0}^{(-\vep_0)}}\times\C_{z_{i_1}^{(-\vep_1)}},\]
where $\C_{z_{i_0}^{(-\vep_0)}}$ and $\C_{z_{i_1}^{(-\vep_1)}}$ are the one dimensional vector space corresponding to the coordinates. 
Since the right hand side is of dimension $2n-4-d+2=2n-d-2$, this shows that $\codim_{\mu^{-1}(0)}\mu^{-1}(0)^{\alpha-us}\geq2$.
\end{proof}
\begin{remark}\hspace{2pt}\\
\vspace{-5mm}
\begin{itemize}
\item[(1)] 
The above Proposition and Lemma also hold for $\mu^{-1}(\xi)$ for any $\xi\in\C^d$ by the same argument. 
\item[(2)]
If $Y_A(0)$ is not simple, the codimension of the unstable locus can be one. For example, consider when $Y_A(0)$ is the $A_1$-type surface singularity (cf. Example \ref{ex:Atypesurface}). If we take $\alpha>0$, then,  
\[\mu^{-1}(0)^{\alpha-us}=\{z_1w_1+z_2w_2=0\}\cap\{z_1=z_2=0\}=\{z_1=z_2=0\}. \]
\end{itemize}
\end{remark}

\begin{proof}[\textbf{Proof of Proposition \ref{prop:keyreflexive}}]
(1) \ Since $\mu^{-1}(0)$ is a normal complete intersection irreducible variety with $\codim \Sing(\mu^{-1}(0))\geq3$ by Proposition \ref{prop:codim}, the injectivity of $r$ and $r_2$ follows from the torsion-freeness of $\Om^2_{\mu^{-1}(0)}$ which is a consequence of Theorem \ref{thm:Vetter} (1). 

(2) \ By the assumption and Proposition \ref{prop:codim}, $\mu^{-1}(0)$ is a normal complete intersection irreducible variety with $\codim \Sing(\mu^{-1}(0))\geq4$. Then by the Vetter's theorem \ref{thm:Vetter} (2), $\Om^2_{\mu^{-1}(0)}$ is a reflexive sheaf. On the other hand, the simpleness of $Y_A(0)$ implies that $\codim_{\mu^{-1}(0)}\mu^{-1}(0)^{\alpha-us}\geq2$ by the above lemma. Then, by the property of reflexive sheaves, $r$ and $r_2$ are isomorphisms. 

(3) \ This is just a computation. Actually, we have
\[\Gamma(\mu^{-1}(0), \Om^1_{\mu^{-1}(0)})\cong \fr<{\bigoplus_{j=1}^n{\C[\bm{z}, \bm{w}]dz_j}\oplus\bigoplus_{j=1}^n{\C[\bm{z}, \bm{w}]dw_j}}/{\langle \sum_{j=1}^n{z_jw_j{a_{ij}}} \rangle_{i=1, \ldots, d}+\langle\sum_{j=1}^n{w_j a_{ij}dz_j+z_j a_{ij}dw_j}\rangle_{i=1, \ldots, d}}>.\]
Then, by the weight 2 condition, the relation ideal above cannot contribute to $\Gamma_2(\mu^{-1}(0), \Om^2_{\mu^{-1}(0)})$. Thus, we have the claim.  
\end{proof}

\subsection{The horizontal condition on 2-forms on $\mu^{-1}(0)$}\hspace{2pt}\\
In this subsection, we will show the following theorem. 
\begin{theorem}\label{thm:simpleindecomposable} Let $\alpha$ be generic. If $Y_A(\alpha)$ is simple, and $A$ is block indecomposable, then we have  
\[\Gamma_2(Y_A(\alpha), \Om^2_{Y_A(\alpha)})=\C\om,\]
where $\om$ is the standard conical symplectic form on $Y_A(\alpha)$. 
\end{theorem}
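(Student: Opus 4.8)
The plan is to transport the whole question to the complete intersection $\mu^{-1}(0)$ and reduce it to a short linear-algebra computation governed by the combinatorics of $A$. Fix a generic $\alpha$ and take $\sigma\in\Gamma_2(Y_A(\alpha),\Om^2_{Y_A(\alpha)})$ (we may assume $\dim Y_A(\alpha)>0$, the other case being trivial). By the diagram $(**)$ of Subsection 7.2 together with Proposition \ref{prop:keyreflexive}(2) — this is exactly where simpleness of $B$ enters, making $r_2$ an isomorphism — the pullback $p^{*}\sigma$ along the $\T_\C^d$-torsor $p\colon\mu^{-1}(0)^{\alpha-st}\to Y_A(\alpha)$ extends uniquely to some $\widetilde\sigma\in\Gamma_2(\mu^{-1}(0),\Om^2_{\mu^{-1}(0)})$, and by Proposition \ref{prop:keyreflexive}(3) I may record it, in the coordinates $\Xi=(z_1,\dots,z_n,w_1,\dots,w_n)^{T}$, as a constant antisymmetric $2n\times 2n$ matrix $M=\begin{pmatrix}C&E\\-E^{T}&D\end{pmatrix}$, i.e.\ $\widetilde\sigma=\sum_{a<b}M_{ab}\,d\xi_a\wedge d\xi_b$. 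Since $p^{*}\sigma$ is horizontal (a pullback along $p$, and the generating vector fields of $\T_\C^d$ are $p$-vertical), the $1$-form $\iota_{X_\xi}\widetilde\sigma$ vanishes on the dense open $\mu^{-1}(0)^{\alpha-st}$ for every $\xi\in\Lie\T_\C^d$, where $X_\xi=\sum_m\langle\bm{a_m},\xi\rangle\bigl(z_m\partial_{z_m}-w_m\partial_{w_m}\bigr)$; and since $\Om^1_{\mu^{-1}(0)}$ is torsion free (Proposition \ref{prop:codim} and Theorem \ref{thm:Vetter}(1)), it follows that $\iota_{X_\xi}\widetilde\sigma=0$ in $\Om^1_{\mu^{-1}(0)}$ for every $\xi$.

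Next I would convert this vanishing into a matrix identity. Writing $L_\xi=\diag(D_\xi,-D_\xi)$ with $D_\xi=\diag(\langle\bm{a_1},\xi\rangle,\dots,\langle\bm{a_n},\xi\rangle)$, a direct contraction gives $\iota_{X_\xi}\widetilde\sigma=-\sum_c(ML_\xi\Xi)_c\,d\xi_c$. Presenting $\Om^1_{\mu^{-1}(0)}$ by the conormal sequence — the ideal of $\mu^{-1}(0)$ is generated by the quadrics $f_i=\sum_j a_{ij}z_jw_j$, whose differentials have coefficient vectors $N_i\Xi$ with $N_i=\begin{pmatrix}0&A_i\\A_i&0\end{pmatrix}$, $A_i=\diag(a_{i1},\dots,a_{in})$ — and comparing degree‑$1$ parts (the ideal has no term of degree $\le 1$), the relation $\iota_{X_\xi}\widetilde\sigma=0$ becomes: for every $\xi$ there is $\bm{\gamma}=\bm{\gamma}(\xi)\in\C^d$ with $ML_\xi=-\sum_i\gamma_iN_i$. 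Since $\sum_i\gamma_iA_i=\diag(\langle\bm{a_1},\bm{\gamma}\rangle,\dots,\langle\bm{a_n},\bm{\gamma}\rangle)=:D_{\bm{\gamma}}$, the block form of this identity is $CD_\xi=0$, $DD_\xi=0$, and $ED_\xi=E^{T}D_\xi=D_{\bm{\gamma}}$, for all $\xi$. Because $A$ is block indecomposable, every $\bm{a_j}\neq\bm{0}$, so each $\langle\bm{a_j},\xi\rangle$ is a nonzero linear form in $\xi$; this forces $C=D=0$ and forces $E$ to be diagonal, $E=\diag(e_1,\dots,e_n)$, and the surviving identity $ED_\xi=D_{\bm{\gamma}}$ says precisely that $E$ maps $\Image(A^{T})=\Ker\bigl(B^{T}\colon\C^n\to\C^{n-d}\bigr)$ into itself.

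Finally I would show that for $A$ block indecomposable the only diagonal operator preserving $\mathcal R:=\Ker B^{T}$ — the space of linear relations among the columns $\bm{b_1},\dots,\bm{b_n}$ of $B^{T}$ — is a scalar. Grouping indices by the value of $e_j$, say $[n]=S_1\sqcup\dots\sqcup S_r$, the eigenspace decomposition $\C^n=\bigoplus_t\C^{S_t}$ of $E$ splits the $E$‑invariant subspace as $\mathcal R=\bigoplus_t(\mathcal R\cap\C^{S_t})$; a dimension count (using $\dim\mathcal R=n-\rank B^{T}$) then forces $\C^{n-d}=\bigoplus_t\Span(\bm{b_j}\mid j\in S_t)$, so the configuration $\{\bm{b_j}\}$ decomposes over this partition, and by Gale duality so does $A$. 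Since $A$ is block indecomposable this is only possible for $r=1$, i.e.\ $E=eI$; hence $M=e\begin{pmatrix}0&I\\-I&0\end{pmatrix}$, that is $\widetilde\sigma=e\,\bigl(\sum_j dz_j\wedge dw_j\bigr)\big|_{\mu^{-1}(0)}$. Restricting to $\mu^{-1}(0)^{\alpha-st}$ and using injectivity of $p^{*}$ (diagram $(**)$) gives $\sigma=e\,\om$, and since $\om$ itself lies in $\Gamma_2(Y_A(\alpha),\Om^2_{Y_A(\alpha)})$ (indeed $\iota_{X_\xi}\om=d\langle\mu,\xi\rangle$ vanishes on $\mu^{-1}(0)$), we conclude $\Gamma_2(Y_A(\alpha),\Om^2_{Y_A(\alpha)})=\C\om$. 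The main obstacle is the careful passage from ``$\iota_{X_\xi}\widetilde\sigma$ restricts to $0$ on $\mu^{-1}(0)$'' to the clean equation $ML_\xi=-\sum_i\gamma_iN_i$ — which rests on torsion‑freeness of $\Om^1_{\mu^{-1}(0)}$ via Proposition \ref{prop:codim} and Vetter's theorem, on the conormal presentation, and on the degree argument — after which the block computation and the Gale‑duality lemma finish the proof.
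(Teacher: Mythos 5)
Your proof is correct, and while it shares the paper's overall architecture --- reduce via Proposition \ref{prop:keyreflexive} to constant-coefficient $2$-forms on the complete intersection $\mu^{-1}(0)$, then let indecomposability of $A$ finish the job --- your execution of the descent condition is genuinely different. The paper invokes Brion's theorem to split descent into $\T_\C^d$-invariance plus horizontality, uses invariance first to kill the $dz\wedge dz$, $dw\wedge dw$ and most mixed terms, and then evaluates horizontality pointwise at explicitly chosen stable points $(\bm{e_{j_1}}+\cdots+\bm{e_{j_r}},\bm{0})$, which requires a (mild) case analysis on the position of $\alpha$. You instead use only the necessary condition $\iota_{X_\xi}\widetilde\sigma=0$, globalize it from the stable locus to all of $\mu^{-1}(0)$ by torsion-freeness of $\Om^1_{\mu^{-1}(0)}$ (Proposition \ref{prop:codim} plus Vetter), and extract from the conormal presentation and a degree count the single matrix identity $ML_\xi=-\sum_i\gamma_iN_i$; this one identity kills $C$, $D$ and the off-diagonal part of $E$ simultaneously, so the separate invariance step becomes superfluous and no choice of base point or assumption on $\alpha$ is needed. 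Your combinatorial endgame is also the Gale dual of the paper's: where the paper's decomposability lemma concerns diagonal operators preserving $\Ker A$, you show that a diagonal $E$ preserving $\Ker B^T=\Image A^T$ splits the $\bm{b}$-configuration via the eigenspace decomposition and a dimension count, and transfer back to $A$ by Gale duality (Lemma \ref{lem:Gale}). Both routes rest on the same two pillars --- Vetter's criterion and connectivity of the underlying matroid --- but yours trades the explicit pointwise computation for a cleaner, choice-free module-theoretic one.
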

Using this theorem, we can show Theorem \ref{thm:main final} as the following. 
\begin{proof}[\textbf{Proof of Theorem \ref{thm:main final}}]
We only have to show that if $A$ is block indecomposable, then $\Gamma_2(Y_A(0)_{\reg}, \Om^2_{Y_A(0)_{\reg}})=\C\om$, where $\om$ is the standard symplectic 2-form on $Y_A(0)_{\reg}$. By Lemma \ref{lem:reduction to simple} and $\vphi^*\om=\underline{\om}$, we only have to show that $\Gamma_2(Y_{\underline{A}}({\underline{\alpha}}), \Om^2_{Y_{\underline{A}}({\underline{\alpha}})})=\C\underline{\om}$, where $\underline{\om}$ is the standard symplectic 2-form. This follows from the above theorem.  
\end{proof}

First, we consider which 2-form $\om$ on $\mu^{-1}(0)^{\alpha-st}$ descends to a 2-form on $Y_A(\alpha)$ in general. 
Now, we note the following well-known fact. 

\begin{theorem}{\rm (\cite[Theorem 1]{Brion})}\label{fact:horizontal}\\
Let $Z$ be a smooth affine variety with an action of some reductive group $G$, and let $p : Z \to Z/\hspace{-3pt}/G$ be the quotient morphism. If $Z/\hspace{-3pt}/G$ is smooth, then $p^* : \Om^q({Z/\hspace{-3pt}/G}) \to \Om^q_{\text{hor}}(Z)^G$ is an isomorphism for any $q$, where $\Om^q_{\text{hor}}(Z)^G$ is the space of $G$-invariant horizontal $q$-forms, that is, their interior product with any vector field induced by the $G$-action is zero. 
\end{theorem}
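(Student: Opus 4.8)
Write $X := Z /\!\!/ G = \Spec \C[Z]^G$, and for $\xi \in \gerg = \Lie(G)$ let $\xi_Z$ denote the fundamental vector field of the $G$-action. The plan is to first dispose of well-definedness and injectivity of $p^*$, then reduce surjectivity to an étale-local statement on $X$ via Luna's slice theorem, and finally settle the resulting linear model directly. Well-definedness is immediate: since $p$ is $G$-invariant we have $dp\circ \xi_Z = 0$, so for $\beta \in \Om^q(X)$ the pullback $p^*\beta$ is $G$-invariant and $\iota_{\xi_Z}p^*\beta = p^*(\iota_{dp(\xi_Z)}\beta)=0$, i.e. $p^*\beta \in \Om^q_{\text{hor}}(Z)^G$. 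Injectivity is also easy: $p$ is dominant, so in characteristic $0$ the generic-stalk map $\Om^q_{\C(X)}\to \Om^q_{\C(Z)}$ is injective (separability), and $\Om^q_X$ is torsion free because $X$ is smooth; hence $p^*$ is injective.

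The real content is surjectivity, and the first move is to \emph{localize on $X$}. Because $G$ is reductive the functor of $G$-invariants is exact and compatible with flat base change, and $p$ is affine; thus $p^*$ is a morphism of quasi-coherent $\calO_X$-modules $\Om^q_X \to \big(p_*\Om^q_{\text{hor}}\big)^G$ whose formation commutes with étale base change $X'\to X$, so being an isomorphism can be checked étale-locally on $X$. Around $x\in X$ pick $z$ in the unique closed orbit of $p^{-1}(x)$, with reductive stabilizer $H := G_z$ (Matsushima). Luna's étale slice theorem then yields the slice representation $W := T_zZ/T_z(G\cdot z)$, an $H$-module, together with a $G$-equivariant étale map $G\times_H W \to Z$ inducing an étale map $W/\!\!/H \to X$ near $x$. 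Under the associated-bundle construction, restriction to $\{e\}\times W$ identifies $\Om^q_{\text{hor}}(G\times_H W)^G$ with $\Om^q_{\text{hor}}(W)^H$ and $(G\times_H W)/\!\!/G$ with $W/\!\!/H$, compatibly with the two pullbacks. This reduces everything to the \emph{linear case}: a reductive group $H$ acting linearly on a vector space $W$ whose quotient $W/\!\!/H$ is smooth.

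In the linear case the quotient is forced to be an affine space. Indeed $\C[W]^H$ is a connected graded finitely generated $\C$-algebra, and smoothness of $W/\!\!/H$ makes it regular, hence (graded regular $\Rightarrow$ polynomial) a polynomial ring $\C[f_1,\ldots,f_m]$ on homogeneous invariants $f_i$. The forms $p^*(df_{i_1}\wedge\cdots\wedge df_{i_q}) = df_{i_1}\wedge\cdots\wedge df_{i_q}$ are manifestly $H$-invariant and horizontal (each $f_i$ is invariant, so $\iota_{\xi_W}df_i = \xi_W(f_i)=0$), and the task is to show they generate $\Om^q_{\text{hor}}(W)^H$ \emph{freely} over $\C[W]^H$. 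This is the \textbf{main obstacle}. The approach I would take is to analyze the contraction complex: horizontal invariant $q$-forms form the kernel of the $\C[W]^H$-linear contraction map $\Om^q(W)^H \to \Hom(\gerh,\Om^{q-1}(W))^H$, $\omega\mapsto(\xi\mapsto \iota_{\xi_W}\omega)$, and one identifies this kernel degree by degree with the free module spanned by the $df_{i_1}\wedge\cdots\wedge df_{i_q}$, using the Reynolds decomposition $\Om^\bullet(W) = \Om^\bullet(W)^H \oplus (\text{nontrivial isotypic part})$ and the Euler grading to control each graded piece.

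It is worth emphasizing exactly where smoothness of the quotient is used, and why a softer argument fails. Over the principal stratum $p$ is a submersion and the relative cotangent sequence splits off $p^*\Om^1_X$ as the horizontal part, so the isomorphism is formal there; the difficulty is entirely concentrated on the non-principal locus. That locus may well have codimension one in $X$ — for instance $\Cstar$ acting on $\C^2$ with weights $(1,-1)$ has quotient $\C$ with a codimension-one jump at the origin — so one cannot extend forms from the principal stratum by reflexivity and a Hartogs argument. It is only after passing to the slice that smoothness of $W/\!\!/H$ forces $\C[W]^H$ to be polynomial, which in turn is what makes the free-generation statement true and closes the proof.
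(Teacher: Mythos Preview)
The paper does not give its own proof of this statement: it is quoted verbatim as a known result from Brion (\cite[Theorem 1]{Brion}) and used as a black box. So there is no ``paper's proof'' to compare against.

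As for the content of your proposal: the reduction strategy via Luna's \'etale slice theorem to the linear case $H \curvearrowright W$ with $W/\!\!/H$ smooth, and the observation that smoothness forces $\C[W]^H$ to be polynomial, is indeed the architecture of Brion's original argument. However, you have not actually proved the theorem: you correctly flag the linear case as the ``main obstacle,'' but what you write there is a plan (``the approach I would take is to analyze the contraction complex \ldots using the Reynolds decomposition \ldots and the Euler grading to control each graded piece'') rather than an argument. Showing that the $df_{i_1}\wedge\cdots\wedge df_{i_q}$ freely generate $\Om^q_{\text{hor}}(W)^H$ over $\C[W]^H$ is the entire substance of the result, and it does not follow from the ingredients you list without real work --- in Brion's paper this is handled by an explicit Koszul-type complex and an inductive argument on $q$. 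Your discussion of why the principal-stratum argument fails and where smoothness enters is accurate and well-observed, but the proof as written has a genuine gap at its core.
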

Note that we have the same conclusion for $p : \mu^{-1}(0)^{\alpha-st} \to Y_A(\alpha)$ since $\mu^{-1}(0)^{\alpha-st}=\mu^{-1}(0)^{\alpha-ss}$ is covered by $\T_\C^d$-invariant affine open subsets (cf.\ Remark \ref{rem:semistable affine}). By Proposition \ref{prop:keyreflexive} (3) in the previous subsection, we should only consider a 2-form $\sigma$ on $\mu^{-1}(0)^{\alpha-st}$ of the form 
\[\sigma=\sum_{k<\ell}{\beta_{k\ell}dz_k\wedge dz_\ell}+\sum_{k<\ell}{\beta'_{k\ell}dw_k\wedge dw_\ell}+\sum_{j, j'}{\gamma_{j, j'}dz_j\wedge dw_{j'}}.\]
Below, by interchanging cloumn vectors of $A$ and multiplying them by $\pm1$, we can assume and define that 
\[A=\left(\begin{array}{ccc|cc|ccc}
&&&&&&&\\
\bm{a^{(0)}}&\cdots&\bm{a^{(0)}}&\cdots&\cdots&\bm{a^{(r)}}&\cdots&\bm{a^{(r)}}\\
&&&&&&&
\end{array}\right), \ \ \ \ 
\overline{A}:=\begin{pmatrix}&&&\\\bm{{a}^{(1)}}&\bm{{a}^{(2)}}&\cdots&\bm{{a}^{(r)}}\\&&&\end{pmatrix},\]
where $\bm{a^{(j)}}$ and $\bm{a^{(j')}}$ are not parallel vectors if $j\neq j'$, and $\bm{a^{(0)}}=\bm{0}$. We denote by $[n]=J_0\sqcup J_1\sqcup\cdots\sqcup J_s$ the corresponding partition of indices set of column vectors of $A$ (note that $J_0$ might be empty). 
 
At first, a necessary condition on $\sigma$ to descend to a 2-form on $Y_A(\alpha)$ is $\T_\C^d$-invariance by Theorem \ref{fact:horizontal}. Since a $\bm{t}\in\T_\C^d$ acts on $dz_j$ and $dw_j$ as $\bm{t}^*dz_j=\bm{t}^{\bm{a_j}}dz_j$ and $\bm{t}^*dw_j=\bm{t}^{\bm{-a_j}}dw_j$, $\T_\C^d$-invariance of $\sigma$ implies that
\begin{align*}
\sigma&=\sum_{\bm{a}_k=-\bm{a}_\ell}{\beta_{k\ell}dz_k\wedge dz_\ell}+\sum_{\bm{a}_k=-\bm{a}_\ell}{\beta'_{k\ell}dw_k\wedge dw_\ell}+\sum_{\bm{a_{j}}=\bm{a_{j'}}}{\gamma_{j, j'}dz_{j}\wedge dw_{j'}}\\
&=\sigma_0+\sum_{\bm{a_{j}}=\bm{a_{j'}}}{\gamma_{j, j'}dz_{j}\wedge dw_{j'}}\\
&=\sigma_0+\sum_{m=1}^r{\sum_{(j, j')\in J_m\times J_m}{\gamma_{j, j'}dz_{j}\wedge dw_{j'}}}, 
\end{align*}
where $\sigma_0\in\Gamma_2(\C^{2|J_0|}, \Om^2_{\C^{2|J_0|}})$ and the second equality holds since there doesn't exist $1\leq k, \ell\leq s$ such that $\bm{a_k}=-\bm{a_\ell}$ by the form of $A$. 

Next, we consider the horizontal condition when $A$ is indecomposable. In particular, $\sigma_0=0$. Below, to be concise, we assume that $\alpha\in\sum_{j=1}^n{\R_{\geq0}\bm{a_j}}$ (the general case is done similarly). The following lemma gives a proof of Theorem \ref{thm:simpleindecomposable}.


\begin{lemma}In the above setting, a $\T_\C^d$-invariant 2-form $\sigma=\sum_{m=1}^r{\sum_{(j, j')\in J_m\times J_m}{\gamma_{j, j'}dz_{j}\wedge dw_{j'}}}$ on $\mu^{-1}(0)^{\alpha-st}$ is horizontal if and only if the following 2 conditions hold: 
\begin{itemize}
\item[(1)] $\gamma_{1, 1}=\cdots=\gamma_{n, n}$. 
\item[(2)] $\gamma_{j, j'}=0$ if $j\neq j'$.  
\end{itemize}
In other words, $\sigma=\gamma\sum_{j=1}^n{dz_j\wedge dw_j}$ for some $\gamma\in\C$. 
\end{lemma}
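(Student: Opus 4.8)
The plan is to compute the horizontal condition directly on $\mu^{-1}(0)^{\alpha-st}$ using the explicit description of the vector fields generating the $\T_\C^d$-action. For each $i=1,\ldots,d$, the $i$-th generator of the $\T_\C^d$-action is the vector field $\xi_i = \sum_{j=1}^n a_{ij}(z_j\partial_{z_j} - w_j\partial_{w_j})$, and horizontality of $\sigma$ means $\iota_{\xi_i}\sigma|_{\mu^{-1}(0)^{\alpha-st}} = 0$ for all $i$ (this restriction matters: the identity only needs to hold after restricting to $\mu^{-1}(0)$, and after further restricting to the stable locus, where the coordinate functions cannot all vanish on the relevant indices). So the first step is to write out $\iota_{\xi_i}\sigma$ for $\sigma = \sum_m\sum_{(j,j')\in J_m\times J_m}\gamma_{j,j'}\,dz_j\wedge dw_{j'}$: one gets $\iota_{\xi_i}\sigma = \sum_m\sum_{(j,j')\in J_m\times J_m}\gamma_{j,j'}\bigl(a_{ij}z_j\,dw_{j'} + a_{ij'}w_{j'}\,dz_j\bigr)$, and since within a block $J_m$ all columns equal $\bm{a^{(m)}}$, this simplifies to $\sum_m a_i^{(m)}\sum_{(j,j')\in J_m\times J_m}\gamma_{j,j'}(z_j\,dw_{j'} + w_{j'}\,dz_j)$.

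Next I would analyze what it means for this $1$-form to vanish on $\mu^{-1}(0)^{\alpha-st}$. On the stable locus, the relations defining $\mu^{-1}(0)$ are $\sum_j z_jw_j\bm{a_j} = 0$, i.e., $\sum_m \bm{a^{(m)}}\sum_{j\in J_m}z_jw_j = 0$, plus (within each block) no constraint relating distinct $z_j,w_{j'}$; the module of $1$-forms on $\mu^{-1}(0)$ was computed in Proposition \ref{prop:keyreflexive}(3)'s companion formula, so the differentials $dz_j, dw_{j'}$ are subject only to the relation $\sum_j(w_j a_{ij}\,dz_j + z_j a_{ij}\,dw_j)$. Comparing coefficients, the horizontal condition forces, block by block, a proportionality between $\sum_{(j,j')\in J_m\times J_m}\gamma_{j,j'}(z_j\,dw_{j'}+w_{j'}\,dz_j)$ and the block-$m$ part of the moment-map relation $\sum_{j\in J_m}(w_j\,dz_j + z_j\,dw_j)$. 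Matching the $dz_j$ and $dw_{j'}$ coefficients as elements of the coordinate ring of $\mu^{-1}(0)^{\alpha-st}$: the coefficient of $dw_{j'}$ is $\sum_{j\in J_m}\gamma_{j,j'}z_j$, which must equal (up to a block-independent-of-$j'$ scalar $\lambda_m$, the proportionality factor) $\lambda_m z_{j'}$; since the $z_j$ for $j\in J_m$ are algebraically independent on the stable locus (one must check this using stability — indeed on $Y_A(\alpha)_{\text{reg}}$ these coordinate functions are genuinely independent), this gives $\gamma_{j,j'} = 0$ for $j\neq j'$ within a block and $\gamma_{j',j'} = \lambda_m$ depending only on the block. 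Symmetrically from the $dz_j$ coefficients one recovers the same $\lambda_m$. So after this step, $\sigma = \sum_{m=1}^r \lambda_m\sum_{j\in J_m}dz_j\wedge dw_j$, which is conditions (1) and (2) \emph{within each block}.

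The remaining step — and the place where block indecomposability of $A$ enters — is to show all the $\lambda_m$ coincide. The point is that the expression $\sum_{m}\lambda_m\sum_{j\in J_m}dz_j\wedge dw_j$ must descend to a \emph{global} $2$-form on $Y_A(\alpha)$, not merely be horizontal and invariant on the open stable locus; but actually since $\mu^{-1}(0)^{\alpha-st} = \mu^{-1}(0)^{\alpha-ss}$ is covered by $\T_\C^d$-invariant affines and $Y_A(\alpha)$ is smooth, Theorem \ref{fact:horizontal} already guarantees descent, so this cannot be the mechanism. Instead, indecomposability must be used to rule out different $\lambda_m$'s as follows: if $A$ is block indecomposable (so $J_0 = \emptyset$, $r = 1$ in the final normalization, i.e., $p=0$ and only one indecomposable block $A_1$), there is genuinely only one block $J_1 = [n]$ and the claim is immediate — all $\gamma_{j,j}$ equal a single $\lambda_1$. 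Wait: the lemma as stated has general $r$, so what is really being claimed is that \emph{even without} indecomposability the invariant-horizontal forms within each $J_m$ are the block-diagonal scalars, and then indecomposability of the relevant $A_m$ (which is what lets us split $[n] = J_1\sqcup\cdots\sqcup J_r$ but forces each $J_m$ to be a single "group" of equal columns) makes $\gamma_{j,j}$ constant on $J_m$. I expect the main obstacle to be precisely the bookkeeping in this last reduction: carefully tracking which column-equality classes $J_m$ are forced to have a single proportionality constant versus which inter-block constants can still differ, and confirming that the hypothesis "$A$ indecomposable" (equivalently $M(A)$ connected) is exactly what collapses everything to a single scalar $\gamma$ — and for that I would invoke the decomposition in Remark \ref{rem:decomposition} together with the fact that on an indecomposable $A$ the column classes $J_m$ cannot be separated, so the horizontality relation cannot be localized to a proper subset of columns, pinning all $\lambda_m$ equal. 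The algebraic-independence claim for the $z_j$ (resp. $w_j$) on $\mu^{-1}(0)^{\alpha-st}$ is the one technical input that needs a short separate justification, most cleanly via the generic freeness of the $\T_\C^d$-action and dimension count.
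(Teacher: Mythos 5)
There is a genuine gap, and it sits exactly where you yourself flag uncertainty: the cross-class step showing that the diagonal constants agree. Your interior-product setup is fine, and the within-class conclusions ($\gamma_{j,j'}=0$ for $j\neq j'$ in $J_m$, and $\gamma_{j,j}$ constant on each $J_m$) can be extracted from it, but note that these do not need indecomposability at all — only that each $\bm{a^{(m)}}\neq\bm{0}$. The place where indecomposability must enter is in forcing the per-class constants $\lambda_1,\ldots,\lambda_r$ to coincide, and your proposal never supplies that argument. Worse, you misidentify the combinatorics: the $J_m$ are the parallel classes of \emph{equal columns} of $A$ (after the normalization preceding the lemma), not the direct summands $A_1,\ldots,A_r$ of the block decomposition. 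An indecomposable $A$ typically has many distinct column classes (e.g.\ the matrix $A_{\ell-1}$ of Example \ref{ex:Atypesurface} is indecomposable with $\ell$ singleton classes), so your suggestion that indecomposability collapses everything to $r=1$ is false, and the vague appeal to "the column classes cannot be separated" is not a proof. There is also an error in the intermediate step: the relations cutting out $\Om^1_{\mu^{-1}(0)}$ are the full differentials $d\mu_k=\sum_j a_{kj}(w_j\,dz_j+z_j\,dw_j)$, which mix all classes, so there is no "block-$m$ part of the moment-map relation" to be proportional to; the correct statement is that $\iota_{\xi_i}\sigma$ must equal a constant combination $\sum_k\lambda_{ik}\,d\mu_k$ (constants by weight reasons), and matching coefficients then says each $\bm{a^{(m)}}$ is an eigenvector of the matrix $\Lambda=(\lambda_{ik})$ with eigenvalue $\lambda_m$. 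Distinct eigenvalues would split $\Span(\bm{a_j})$ into complementary pieces each spanned by a subset of the columns, contradicting indecomposability (condition (ii) of the paper's final lemma). That eigenvector/splitting argument, or an equivalent, is the missing content.

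For comparison, the paper avoids the global module computation entirely: it evaluates horizontality at explicit stable points $(\bm{e_{j_1}}+\cdots+\bm{e_{j_r}},\bm{0})$ with one chosen index per class, which reduces the condition to the statement that $\Ker\overline{A}$ is preserved by $\diag(\gamma_{j_1,j_1},\ldots,\gamma_{j_r,j_r})$, and then invokes the lemma that such an invariance with non-constant diagonal forces $A$ to be decomposable. Your global route can be completed, but only by adding the eigenvector argument above; as written, the proof of condition (1) across different classes is absent.
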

\begin{proof}
It is enough to show the ``only if'' part. To show (1) and (2), we apply the horizontal condition to a suitable point $(p, q)\in\mu^{-1}(0)^{\alpha-st}$. Recall the description of the $\alpha$-stable locus: 
\[\mu^{-1}(0)^{\alpha-st}=\Set{(\bm{z}, \bm{w}) \in \mu^{-1}(0) \ | \ \alpha \in \sum_{j: z_j \neq0}{\R_{\geq0}\bm{a_j}} + \sum_{j: w_j\neq0}{\R_{\geq0}(-\bm{a_j})}}.\]
Then, by the assumption $\alpha\in\sum_{j=1}^n{\R_{\geq0}\bm{a_j}}$, for any $j_1\in J_1, \ldots, j_s\in J_s$, $(p, q):=(\bm{e_{j_1}}+\cdots+\bm{e_{j_s}}, \bm{0})$ is contained in $\mu^{-1}(0)^{\alpha-st}$. Then, by the definition, we have 
\[T_{(p, q)}\mu^{-1}(0)^{\alpha-st}=\Set{(\bm{u}, \bm{v})\in T_{(p, q)}\C^{2n} \ | \ \sum_{m=1}^r{v_{j_m}\bm{a_{j_m}}}=\sum_{m=1}^r{v_{j_m}\bm{a^{(m)}}}=0},\]
\[T_{(p, q)}(\T_\C^d\cdot(p, q))=\Span\Set{(a_{i, j_1}\bm{e_{j_1}}+\cdots+a_{i, j_r}\bm{e_{j_r}}, \bm{0})^T \ | \ i=1, \ldots, d},\]
\[\sigma_{(p, q)}((\bm{u}, \bm{v}), (\bm{\xi}, \bm{\eta}))=\sum_{m=1}^r{\sum_{(j, j')\in J_m\times J_m}{\gamma_{j, j'}(u_{j}\eta_{j'}-v_{j'}\xi_{j})}}. \]
Thus, $\sigma$ is horizontal at $(p, q)$, i.e., $\sigma_{(p, q)}((\bm{u}, \bm{v}), (\bm{\xi}, \bm{\eta}))=0$ for any $(\bm{u}, \bm{v})\in T_{(p, q)}\mu^{-1}(0)^{\alpha-st}$ and $(\bm{\xi}, \bm{\eta})\in T_{(p, q)}(\T_\C^d\cdot(p, q))$ if and only if for any $v_{j'}$ ($j' \in J_m\setminus\{j_m\}$) and any $v_{j_m}$'s such that $\sum_{m=1}^r{v_{j_m}\bm{a^{(m)}}}=0$, we have
\begin{equation*}\tag{*}
\bm{0}=-\sum_{m=1}^r{\left(\sum_{j'\in J_m}\gamma_{j_m, j'}v_{j'}\right)\bm{a^{(m)}}}.
\end{equation*}

(1) \ In particular, if we take as $v_{j'}=0$ for any $j'\in J_m\setminus\{j_m\}$, then for any element $(v_{j_1}, \ldots, v_{j_r})^T\in\Ker \overline{A}$, we have  
\[\bm{0}=\sum_{m=1}^r{\gamma_{j_m, j_m}v_{j_m}\bm{a_{j_m}}}=\sum_{m=1}^r{\gamma_{j_m, j_m}v_{j_m}\bm{a^{(m)}}}.\]
Then, by the indecomposability of $A$ and two lemmas below, we have $\gamma_{j_1, j_1}=\cdots=\gamma_{j_r, j_r}$. 

(2) \ For any $j\neq j'\in J_m$, we take $j_1\in J_1, \ldots, j_r\in J_r$ as $j=j_m$. If we consider the horizontal condition (*) for $v_{j''}=0$ ($\forall j''\in[n]\setminus\{j'\}$), then $(v_{j_1}, \ldots, v_{j_r})^T=\bm{0}\in\Ker \overline{A}$ and  
\[\bm{0}=-\gamma_{j_m, j'}v_{j'}\bm{a^{(m)}}.\]
This implies that $\gamma_{j, j'}=0$. This completes the proof. 
\end{proof}

Below, we will show two lemmas which are used in the proof of the above Proposition. 
\begin{lemma}{\rm (A special case of \cite[Exercise 4.1.5]{Ox})}\\
Let $A$ be a $d\times n$-matrix ($d\leq n$). 
$A$ is indecomposable if and only if $n=1$ or its simplification $\overline{A}$ is indecomposable and $A$ has no zero column vectors (Don't confuse $\underline{A}$ with $\overline{A}$).   

\end{lemma}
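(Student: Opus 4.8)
The plan is to reduce the statement to the elementary matroid fact that connectivity of a loopless matroid is detected by its simplification, which (together with the conventions recorded in the remarks after Definition~\ref{def:decomposable}) is precisely the cited \cite[Exercise 4.1.5]{Ox}; I will, however, argue directly with matrices. If $n=1$ then by Assumption~\ref{ass:1} we must have $d=0$ and $Y_A(0)=\C^2$, so $A$ is indecomposable by convention and there is nothing to prove; hence assume $n\geq 2$ from now on. After the operations of Definition~\ref{def:operation} we may write $A=[\,\bm{a^{(0)}},\ldots,\bm{a^{(0)}}\mid\cdots\mid\bm{a^{(r)}},\ldots,\bm{a^{(r)}}\,]$ with $\bm{a^{(0)}}=\bm 0$, with $\bm{a^{(j)}}\neq\bm 0$ and pairwise non-parallel for $j\geq 1$; write $[n]=J_0\sqcup J_1\sqcup\cdots\sqcup J_r$ for the induced partition of indices and $\overline A=[\,\bm{a^{(1)}},\ldots,\bm{a^{(r)}}\,]$, so that $\overline A$ has no zero column and (since every nonzero column of $A$ is $\pm$ a column of $\overline A$, and $A\colon\Z^n\to\Z^d$ is surjective) the columns of $\overline A$ generate $\Z^d$.

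For $(\Rightarrow)$, assume $A$ is indecomposable. Then $J_0=\emptyset$, i.e.\ $A$ has no zero column, for otherwise Definition~\ref{def:decomposable}(i) already makes $A$ decomposable; so it remains to show $\overline A$ is indecomposable. Suppose instead that $\overline A\sim\overline A_1\oplus\overline A_2$ (this is the only way $\overline A$ can be decomposable, since it has no zero column). Such a decomposition provides a partition $\{1,\ldots,r\}=T_1\sqcup T_2$ into two nonempty sets together with a $\Z$-module splitting $\Z^d=L_1\oplus L_2$ such that $\bm{a^{(m)}}\in L_\epsilon$ whenever $m\in T_\epsilon$. Declaring an index $i\in J_m$ to lie in $S_\epsilon$ when $m\in T_\epsilon$ yields nonempty $S_1,S_2$ with $[n]=S_1\sqcup S_2$ such that each column of $A$ indexed by $S_\epsilon$ lies in $L_\epsilon$; hence $A\sim A_1\oplus A_2$ with respect to the same splitting $\Z^d=L_1\oplus L_2$, contradicting indecomposability of $A$.

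For $(\Leftarrow)$, assume $\overline A$ is indecomposable and $A$ has no zero column, and suppose for contradiction that $A$ is decomposable. By hypothesis Definition~\ref{def:decomposable}(i) is impossible, so $A\sim A_1\oplus A_2$, giving a partition $[n]=S_1\sqcup S_2$ into nonempty sets and a $\Z$-module splitting $\Z^d=L_1\oplus L_2$ with each column of $A$ indexed by $S_\epsilon$ lying in $L_\epsilon$. If $\bm a_i=\lambda\bm a_j$ with $\lambda\neq 0$ and $i\in S_1$, $j\in S_2$, then $\bm a_i\in L_1\cap L_2=\{0\}$, which is impossible; hence each parallel class $J_m$ is contained in a single $S_\epsilon$, and the partition of $[n]$ descends to a partition $\{1,\ldots,r\}=T_1\sqcup T_2$ with $\bm{a^{(m)}}\in L_\epsilon$ for $m\in T_\epsilon$. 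Both $T_1$ and $T_2$ are nonempty: if, say, $T_1=\emptyset$, then every column of $A$ — all of which are nonzero — lies in $L_2$, contradicting $S_1\neq\emptyset$. Therefore $\overline A\sim\overline A_1\oplus\overline A_2$, contradicting indecomposability of $\overline A$.

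The only bookkeeping that needs care is that the splittings $\Z^d=L_1\oplus L_2$ used in both directions are honest $\Z$-module direct sum decompositions compatible with the equivalence $\sim$: in $(\Rightarrow)$ this is built into the meaning of $\overline A\sim\overline A_1\oplus\overline A_2$ (the $GL_d(\Z)$ matrix realizing the equivalence carries the standard coordinate splitting to such an $L_1\oplus L_2$), and in $(\Leftarrow)$ one uses the surjectivity of $A\colon\Z^n\to\Z^d$ to see that $L_1\oplus L_2$ is all of $\Z^d$ rather than a proper submodule. I do not anticipate a genuine obstacle here: the whole argument is simply the standard transfer of a block decomposition through the operations of adding and deleting parallel columns (loops being excluded by the no-zero-column hypothesis), which is exactly why the statement is quoted as a special case of \cite[Exercise 4.1.5]{Ox}.
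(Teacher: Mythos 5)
Your argument is correct. Note that the paper itself gives no proof of this lemma: it is quoted as a special case of \cite[Exercise 4.1.5]{Ox} and used as a black box, so your direct matrix-level verification supplies what the paper delegates to the reference. The route you take is the natural one: a block splitting $\Z^d=L_1\oplus L_2$ witnessing $\overline{A}\sim \overline{A}_1\oplus\overline{A}_2$ lifts to one for $A$ because every nonzero column of $A$ is a nonzero multiple of a column of $\overline{A}$, and conversely a block splitting of $A$ descends because no parallel class can be split between the two summands. Three small points. First, in the step ``$\bm{a_i}=\lambda\bm{a_j}$ with $i\in S_1$, $j\in S_2$ forces $\bm{a_i}\in L_1\cap L_2$'' the scalar $\lambda$ need not be an integer for a general matrix, so one should clear denominators (find $\mu,\nu\in\Z\setminus\{0\}$ with $\mu\bm{a_i}=\nu\bm{a_j}\in L_1\cap L_2=0$, whence $\bm{a_i}=0$); under the standing unimodularity assumption $\lambda=\pm1$ and this is immediate. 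Second, your closing concern about needing surjectivity of $A$ to conclude $L_1\oplus L_2=\Z^d$ is unnecessary: the submodules $L_\epsilon$ are the preimages under some $P\in GL_d(\Z)$ of the two coordinate subspaces, so their sum is all of $\Z^d$ by construction. Third, the $n=1$ case rests purely on the convention recorded in the remark after Definition \ref{def:decomposable} (the $1\times1$ zero matrix is declared indecomposable, and a single nonzero column is genuinely indecomposable), so invoking Assumption \ref{ass:1} there is not needed for the lemma as stated; this does not affect the substance of the proof.
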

\begin{lemma}The following are equivalent.  

\begin{itemize}
\item[(i)]$A$ is decomposable (cf.\ Definition \ref{def:decomposable}) 
\item[(ii)]There exists a partition $I\sqcup J=[n]$ of the indices of column vectors of $A$ such that $\Span(\bm{a_i} \ | \ i\in I)\cap\Span(\bm{a_j} \ | \ j\in J)=0$. 
\item[(iii)]There exists $\bm{\gamma}=(\gamma_1, \ldots, \gamma_n)\neq\C(1, 1, \ldots, 1)$ $(\gamma_i\neq0 \ \forall i)$ such that if $A\bm{v}=\bm{0}$ then $A(\gamma_1v_1, \ldots, \gamma_nv_n)^T=\bm{0}$. 
\end{itemize}
\end{lemma}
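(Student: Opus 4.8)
The plan is to reformulate all three conditions as statements about the subspace $W:=\Ker(A)\subseteq\C^n$ under the diagonal action of the coordinate torus $(\Cstar)^n$ — geometrically, (ii) is exactly the assertion that the vector matroid $M(A)$ is \emph{disconnected}, i.e. has a nontrivial separator — and then to prove (i) $\Leftrightarrow$ (ii) and (ii) $\Leftrightarrow$ (iii) separately, taking care at the degenerate ends (a zero column, a single column $n=1$, an empty block) so that everything agrees with the conventions of Definition \ref{def:decomposable}. Throughout, for $I\subseteq[n]$ I write $\C^I:=\Span(\bm{e_i}\mid i\in I)$ and $V_I:=\Span(\bm{a_i}\mid i\in I)=A(\C^I)$; note $V_{[n]}=\C^d$ since $\rank A=d$, and a partition $I\sqcup J=[n]$ in (ii) is to be read with both $I,J$ nonempty (otherwise, since $V_\emptyset=0$, condition (ii) would be vacuous).

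I would first establish (i) $\Leftrightarrow$ (ii). For (i) $\Rightarrow$ (ii): if some $\bm{a_i}=\bm 0$ take $I=\{i\}$, $J=[n]\setminus\{i\}$, so $V_I=0$; and if $PAQ=A_1\oplus A_2$ with $A_m$ of size $d_m\times n_m$ ($n_m\geq1$), then the columns of $AQ=P^{-1}(A_1\oplus A_2)$ split into the first $n_1$, lying in $P^{-1}(\C^{d_1}\oplus 0)$, and the last $n_2$, lying in $P^{-1}(0\oplus\C^{d_2})$, so transporting this partition of columns back along the signed permutation $Q$ yields $[n]=I\sqcup J$ with $V_I\cap V_J=0$. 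For (ii) $\Rightarrow$ (i): if $V_I=0$ (or $V_J=0$) then some $\bm{a_i}=\bm 0$ and we are in Definition \ref{def:decomposable}(i); otherwise pick maximal $\C$-independent subsets $S_I\subseteq\{\bm{a_i}:i\in I\}$ and $S_J\subseteq\{\bm{a_j}:j\in J\}$, so that $|S_I|+|S_J|=d$ and $S_I\cup S_J$ is a $\C$-basis of $\C^d$; since $A$ is unimodular (Assumption \ref{ass:1}), a maximal independent set of columns is in fact a $\Z$-basis of $\Z^d$ (complete it by standard basis vectors to a square submatrix of $[A\mid I_d]$, of determinant $\pm1$). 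Letting $P\in GL_d(\Z)$ be the inverse of the matrix with columns $S_I$ followed by $S_J$ and $Q$ the permutation moving $I$ before $J$, one reads off $PAQ=A_1\oplus A_2$.

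Next, for (ii) $\Leftrightarrow$ (iii) I would use the following bridge: for $[n]=I\sqcup J$ and $\bm v=\bm{v_I}+\bm{v_J}$ (with $\bm{v_I}\in\C^I$, $\bm{v_J}\in\C^J$) satisfying $A\bm v=\bm0$, we have $A\bm{v_I}=-A\bm{v_J}\in V_I\cap V_J$, so $V_I\cap V_J=0$ is equivalent to $W=(W\cap\C^I)\oplus(W\cap\C^J)$. Given (ii), set $\gamma_i=1$ on $I$ and $\gamma_j=2$ on $J$: then $\bm\gamma\notin\C(1,\dots,1)$, all $\gamma_i\neq0$, and $A(\bm\gamma\cdot\bm v)=A\bm{v_I}+2A\bm{v_J}=0$ for $\bm v\in W$, which is (iii). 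Conversely, (iii) says the diagonal operator $D=\diag(\gamma_1,\dots,\gamma_n)$ preserves $W$; since $D$ is diagonalizable with $\lambda$-eigenspace $\C^{I_\lambda}$, $I_\lambda:=\{i:\gamma_i=\lambda\}$, its restriction to the invariant subspace $W$ is again diagonalizable, so $W=\bigoplus_\lambda(W\cap\C^{I_\lambda})$. As $\bm\gamma$ is not a scalar multiple of $(1,\dots,1)$, at least two values occur; fixing one of them as $\lambda_0$ and putting $I:=I_{\lambda_0}$, $J:=[n]\setminus I$ (both nonempty) gives $W=(W\cap\C^I)\oplus(W\cap\C^J)$. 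Finally, for $\bm x\in V_I\cap V_J$ write $\bm x=A\bm{u_I}=A\bm{u_J}$ with $\bm{u_I}\in\C^I$, $\bm{u_J}\in\C^J$; then $\bm{u_I}-\bm{u_J}\in W$, and comparing $\C^I$-components in its $(W\cap\C^I)\oplus(W\cap\C^J)$-decomposition forces $\bm{u_I}\in W$, whence $\bm x=A\bm{u_I}=\bm0$, so $V_I\cap V_J=0$, i.e. (ii).

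The calculations here are short, and the only genuinely delicate point is the integrality in (ii) $\Rightarrow$ (i): it is not enough to split $\C^d=V_I\oplus V_J$, one needs a splitting realized by an element of $GL_d(\Z)$, and this is precisely where unimodularity of $A$ enters — without it the lattice can fail to split (e.g. columns spanning $\Z(1,1)$ and $\Z(1,-1)$ inside $\Z^2$ give an index-$2$ sublattice, though such a matrix is not unimodular). The remaining subtlety is purely bookkeeping at the degenerate ends ($n=1$, empty blocks, zero columns), which must be matched against the conventions of Definition \ref{def:decomposable} (in particular the formal declaration that the $1\times1$ zero matrix is indecomposable and that a zero column already makes $A$ decomposable).
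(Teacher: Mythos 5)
Your proof is correct, and it differs from the paper's mainly in organization and in how much it makes explicit. The paper dispatches (i) $\Leftrightarrow$ (ii) with a citation to Oxley and only writes out (iii) $\Rightarrow$ (ii), arguing directly with column spans: it normalizes $\bm{\gamma}=(\gamma_1,\dots,\gamma_1,\gamma_{s+1},\dots,\gamma_n)$ with $\gamma_i\neq\gamma_1$ for $i>s$, takes a putative $\bm{w}\in V_I\cap V_J$ expressed over a linearly independent subset $J'\subseteq J$, applies (iii) to the resulting kernel element, and gets $\sum_{j\in J'}(\gamma_1-\gamma_j)c_j\bm{a_j}=0$, forcing $\bm{w}=0$. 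Your route through the splitting $\Ker A=(\Ker A\cap\C^I)\oplus(\Ker A\cap\C^J)$ and the diagonalizability of $D|_{\Ker A}$ is the same idea (isolate one level set of $\bm{\gamma}$) in more conceptual packaging, and your bridge lemma equating $V_I\cap V_J=0$ with the kernel splitting unifies both directions of (ii) $\Leftrightarrow$ (iii) cleanly. What your version adds that the paper omits is the worked-out (ii) $\Rightarrow$ (i): the Oxley reference is a statement about vector matroids over a field, whereas Definition \ref{def:decomposable} demands $P\in GL_d(\Z)$, and your observation that unimodularity makes the $d$ chosen columns $S_I\cup S_J$ a $\Z$-basis (nonzero maximal minor, hence $\pm1$) is exactly the point needed to get an integral block decomposition rather than merely one over $\C$. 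The only loose thread, which you flag but do not fully resolve and which the paper shares, is the degenerate case $n=1$ with $\bm{a_1}=\bm{0}$, where Definition \ref{def:decomposable}(i) and conditions (ii), (iii) genuinely disagree; this case never arises in the application.
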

\begin{proof}
It is easy to show that (i) and (ii) are equivalent (For example, see \cite[Exercise 4.1.7]{Ox}). Thus, we only have to show that those two conditions are equivalent to (iii). (i)$\Rightarrow$ (iii) is easy. We prove (iii)$\Rightarrow$(ii). By interchanging the coordinates, we can assume that $\bm{\gamma}=(\gamma_1, \ldots, \gamma_1, \gamma_{s+1}, \ldots, \gamma_n)$ ($s<n$), where for any $i> s$, $\gamma_i\neq\gamma_1$. Then, we show that the partition $I:=\{1, \ldots, s\}$ and $J:=\{s+1, \ldots, n\}$ satisifies $W_I\cap W_J=0$, where $W_I:=\Span(\bm{a_i} \ | \ i\in I)$ and $W_J$ is similar. Assume that there exists $0\neq\bm{w}\in W_I\cap W_J$. Then, we can express $\bm{w}$ as 
\[\bm{w}=\sum_{i\in I}{c_i\bm{a_i}}=\sum_{j\in J'}{c_j\bm{a_j}},\]
where $J'\subset J$ is a fixed subset of $J$ such that $\{\bm{a_j} \ | \ j\in J'\}$ are linearly independent. In particular, $\bm{v}:=(c_1, \ldots, c_s, -c_{s+1}, \ldots, -c_n)^T$ satisfies $A\bm{v}=0$. By the assumption of (iii), we have $A(\gamma_1c_1, \ldots, \gamma_1c_s, -\gamma_{s+1}c_{s+1}, \ldots, -\gamma_nc_n)^T=\bm{0}$. Then, 
\[\bm{0}\neq\gamma_1\bm{w}=\sum_{i\in I}{\gamma_1c_i\bm{a_i}}=\sum_{j\in J'}{\gamma_{j}c_j\bm{a_j}}.\]
However, this is a contradiction since $\{\bm{a_j} \ | \ j\in J'\}$ are linearly independent and $\gamma_1\neq \gamma_j$ for any $j\in J'\subset J$. This completes the proof. 
\end{proof}
\section{All symplectic strctures on hypertoric varieties and its application to classification}

In the previous section, for indecomposable hypertoric varieties $Y_A(\alpha)$, we showed that they admit only one homogeneous 2-form on them up to scalar (cf.\ Corollary \ref{cor:generalhypertoric}). In this section, we describe the space of homogeneous 2-forms on general decomposable  hypertoric varieties (cf. Proposition \ref{prop:generalform}). 
As a corollary, we can show that for any conical symplectic 2-form $\sigma$ on $Y_A(\alpha)$, there exists a $\psi\in\Aut^{\Cstar}(Y_A(\alpha))$ such that $\psi^*\sigma=\om$. In particular, as a byproduct, we will show that if two affine or smooth hypertoric variets are $\Cstar$-equivariant isomorphic as algebraic varieties, then they are $\Cstar\times\T_\C^{n-d}$-equivariant isomorphic as symplectic varieties (cf. Corollary \ref{cor:classification}). This gives a refinement of the result in \cite[Theorem 4.2]{Nag}.  


For a given matrix $A$, we can assume that $A$ is of the form 
\[A=
\left(
\begin{array}{ccc|c@{}c@{}cc}
&&&\begin{array}{|cccc|}\hline  &   & &\\&\vcbig{{A_1}}& \\\hline\end{array}&&&\LargeO\\
&&&& \begin{array}{|cccc|}\hline &   &   & \\&\vcbig{{A_2}}&\\\hline\end{array}&& \\
\vcbig{{O_p}}&&& &\ddots &  \\
&&&\LargeO&&&\begin{array}{|cccc|}\hline&&&\\&\vcbig{{A_r}}&\\\hline\end{array} \\ 
\end{array}
\right),\]
where each $A_m$ is indecomposable and $O_p$ denotes the $d\times p$-zero matrix. In what follows, $[n]=I_0\sqcup I_1\sqcup\cdots\sqcup I_r$ denotes the corresponding decomposition of indices set of column vectors. Then, as we have seen in Remark \ref{rem:decomposition}, we had a natural isomorphism $Y_A(\alpha) \cong \C^{2p}\times\prod_{m=1}^r{Y_{A_m}(\alpha_m)}$, where $\alpha=\alpha_1\oplus\cdots\oplus\alpha_r\in\Z^d$. Then, we can describe the space of $\Cstar$-equivariant 2-forms on $Y_A(\alpha)_{\reg}$.

\begin{proposition}\label{prop:generalform}
In the above setting, for any (not necessarily generic) $\alpha=\alpha_1\oplus\cdots\oplus\alpha_r\in\Z^d$, we can describe the space of $\Cstar$-equivariant 2-forms with weight 2 as the following: 
\begin{align*}
\Gamma_2(Y_{{A}}({{\alpha}})_{\reg}, \Om^2_{Y_{{A}}({{\alpha}})_{\reg}})=\Gamma_2(\C^{2p}, \Om^2_{\C^{2p}})\oplus\bigoplus_{m=1}^r{{\C{\om}_m}},
\end{align*}
where $\om_m$ is the standard conical symplectic form on $Y_{A_m}(\alpha_m)_{\reg}$. In particular, any conical symplectic form on $Y_A(\alpha)$ is of the form 
\[g^*\om_0+\sum_{m=1}^r{\gamma_m {\om}_m} \ \ \ (g\in GL(\C^{2p}), \ \gamma_m\in\Cstar),\]
where $\om_0$ is the standard symplectic structure on $\C^{2p}$. 
\end{proposition}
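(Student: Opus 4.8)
The plan is to reduce the statement to the indecomposable case already treated in Corollary~\ref{cor:generalhypertoric}, using the product decomposition $Y_A(\alpha) \cong \C^{2p}\times\prod_{m=1}^r Y_{A_m}(\alpha_m)$ from Remark~\ref{rem:decomposition}. First I would restrict attention to the regular locus: since $\C^{2p}$ is smooth and each $Y_{A_m}(\alpha_m)$ is a symplectic variety, one has $Y_A(\alpha)_{\reg} = \C^{2p}\times\prod_m Y_{A_m}(\alpha_m)_{\reg}$ up to a locus of codimension $\geq 2$ (the singular locus of a product is supported on the union of $(\text{factor sing})\times(\text{rest})$, each of codimension $\geq 2$ by Theorem~\ref{thm:PW} / Proposition~\ref{prop:codim}), so by normality and reflexivity of $\Om^2$ it suffices to compute $\Cstar$-equivariant weight-$2$ $2$-forms on the open product $U := \C^{2p}\times\prod_m Y_{A_m}(\alpha_m)_{\reg}$.

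Next I would decompose a weight-$2$ $2$-form on $U$ according to the external product structure. Writing $\Om^2_{U}$ via the Künneth-type splitting $\Om^2 = \bigoplus_{i} \Om^1_{(i)}\wedge\Om^1_{(\neq i)} \oplus \bigoplus_i \Om^2_{(i)}$ over the factors, a global section $\sigma$ of $\Om^2_U$ breaks into a sum of a piece $\sigma_i \in \Gamma(\Om^2_{\text{factor }i})\otimes\Gamma(\calO_{\text{rest}})$ for each factor and ``mixed'' pieces $\sigma_{ij}\in\Gamma(\Om^1_{(i)})\otimes\Gamma(\Om^1_{(j)})\otimes\Gamma(\calO_{\text{rest}})$ for $i\neq j$. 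Imposing the weight-$2$ condition with respect to the diagonal $\Cstar$-action (which acts with weight $1$ on all $z_j,w_j$, hence weight $2$ on each $\Om^2$ factor and weight $2$ on each mixed term $\Om^1\otimes\Om^1$): the coefficient functions must have weight $0$, hence are constants (each factor is a positively graded cone with $R_0 = \C$, cf.\ Definition~\ref{def:conical}). So $\sigma = \sum_i c_i\,\sigma_i^{(0)} + \sum_{i\neq j}(\text{constant mixed terms})$, where $\sigma_i^{(0)}$ ranges over $\Gamma_2$ of the $i$-th factor.

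Then I would kill the mixed terms. A constant section of $\Gamma(\Om^1)\otimes\Gamma(\Om^1)$ between two factors must use global weight-$1$ one-forms on each; for the $\C^{2p}$ factor these exist ($dz,dw$), but for a factor $Y_{A_m}(\alpha_m)$ with $A_m$ indecomposable one checks there are no nonzero global $1$-forms of weight $1$ — equivalently, by Theorem~\ref{fact:horizontal} and the horizontal/$\T_\C^d$-invariance analysis of Section~7.3, a weight-$1$ invariant horizontal $1$-form on $\mu^{-1}(0)^{\alpha-st}$ would be $\sum \gamma_j\,dz_j + \sum\gamma'_j\,dw_j$, and $\T_\C^d$-invariance forces $\bm{a_j}=\bm{0}$, impossible for the indecomposable blocks. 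Hence the only surviving mixed terms live inside $\C^{2p}\wedge\C^{2p}$, i.e.\ inside $\Gamma_2(\C^{2p},\Om^2_{\C^{2p}})$. For each factor $Y_{A_m}(\alpha_m)$ with $A_m$ indecomposable, Corollary~\ref{cor:generalhypertoric} gives $\Gamma_2(Y_{A_m}(\alpha_m)_{\reg},\Om^2) = \C\om_m$. Collecting: $\Gamma_2(Y_A(\alpha)_{\reg},\Om^2) = \Gamma_2(\C^{2p},\Om^2_{\C^{2p}})\oplus\bigoplus_m\C\om_m$. Finally, a conical symplectic form $\sigma$ in this space has the shape $\sigma_0 + \sum_m\gamma_m\om_m$ with $\sigma_0$ a constant-coefficient symplectic form on $\C^{2p}$ and $\gamma_m\neq 0$ (nondegeneracy forces each $\gamma_m\neq0$ since $\om_m$ is already symplectic on its factor and the factors are $\sigma$-orthogonal); writing $\sigma_0 = g^*\om_0$ for suitable $g\in GL(\C^{2p})$ by the classification of constant symplectic forms on a vector space gives the stated normal form.

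**Main obstacle.** The subtle point is controlling the mixed ($\Om^1\otimes\Om^1$) cross-terms and, relatedly, the reduction from $Y_A(\alpha)_{\reg}$ to the honest product $U$: one must be sure that a global reflexive $2$-form on $Y_A(\alpha)_{\reg}$ genuinely decomposes along the product (i.e.\ that there is no ``extra'' $2$-form supported near the singularities of the factors that the product-of-regular-loci picture misses). This is handled by the codimension-$\geq 2$ estimate plus reflexivity of $\Om^2$ on the normal variety $Y_A(\alpha)$ — the same mechanism as in Lemma~\ref{lem:reduction to simple} — but it is the step that needs care rather than the vanishing of weight-$1$ forms, which is essentially bookkeeping already done in Section~7.3.
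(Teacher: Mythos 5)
Your skeleton — decompose along $Y_A(\alpha)\cong\C^{2p}\times\prod_m Y_{A_m}(\alpha_m)$, apply a K\"unneth splitting of $\Om^2$, use the weight to force constant coefficients, kill the mixed $\Om^1\otimes\Om^1$ terms, and quote the indecomposable case for the diagonal blocks — is the same as the paper's. The genuine gap is that you run the K\"unneth argument directly on the quotient $Y_A(\alpha)_{\reg}$, where you have no control over the low-weight graded pieces of $\Gamma(\Om^p)$ on the singular factors. Your phrase ``the coefficient functions must have weight $0$'' tacitly assumes that every global $2$-form (resp.\ $1$-form) on a factor is a function times something of weight exactly $2$ (resp.\ $1$); on a singular cone $\Gamma(Y_{\reg},\Om^p)$ is not generated in degree $p$, so this is not automatic. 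Concretely, since $\C^{2p}$ carries global functions of weights $1$ and $2$, the weight-$2$ K\"unneth terms include $\Gamma_1(\Om^2_{Y_{A_m}(\alpha_m)_{\reg}})\otimes\Gamma_1(\calO_{\C^{2p}})$, $\Gamma_0(\Om^2)\otimes\Gamma_2(\calO_{\C^{2p}})$, and $\Gamma_a(\Om^1)\otimes\Gamma_b(\Om^1)$ with $(a,b)\neq(1,1)$; nothing you cite — in particular not Corollary \ref{cor:generalhypertoric}, which only concerns weight $2$ — shows these vanish. Likewise, your claim that a weight-$1$ $1$-form on an indecomposable factor ``would be $\sum_j\gamma_j\,dz_j+\sum_j\gamma'_j\,dw_j$'' presupposes that a $1$-form on $\mu^{-1}(0)^{\alpha-st}$ is the restriction of one on $\mu^{-1}(0)$; that extension is exactly the reflexivity-plus-$\codim\mu^{-1}(0)^{\alpha-us}\geq2$ mechanism of Proposition \ref{prop:keyreflexive}, and the codimension bound on the unstable locus fails for non-simple $A_m$, so the step cannot be invoked for arbitrary factors as stated. (By contrast, your worry about passing from $Y_A(\alpha)_{\reg}$ to the product of regular loci is a non-issue: $\Sing(X\times Y)=(\Sing X\times Y)\cup(X\times\Sing Y)$, so the equality holds on the nose.)

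The paper avoids all of this by performing the K\"unneth decomposition upstairs on the affine complete intersection $\mu_A^{-1}(0)=\prod_m\mu_{A_m}^{-1}(0)$, where $\Gamma(\Om^p)$ is visibly a quotient of the free module generated by wedges of the $dz_j$ and $dw_j$, hence concentrated in weights $\geq p$ with weight-$p$ part the constant-coefficient forms; the mixed terms are then killed by $\T_\C^d$-invariance alone (using $\bm{a_j}\neq\pm\bm{a_{j'}}$ across blocks), the diagonal blocks are handled by the horizontality lemma, and the passage down to the quotient uses Proposition \ref{prop:keyreflexive}(2) in the simple generic case followed by the transfer through $\vphi$ and $\pi_{\underline{\alpha}}$ (Lemma \ref{lem:reduction to simple}) for general $\alpha$ and non-simple $A$. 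Your argument can be repaired by routing every graded piece $\Gamma_a(\Om^p)$ of every factor through that same chain (the lemma holds in every weight, as the paper notes), but as written the required vanishing statements are asserted rather than proved, and they are precisely the technical content of the proof.
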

\begin{proof}
First, we assume that $Y_A(\alpha)$ is simple and $\alpha$ is generic. As noted in Proposition \ref{prop:keyreflexive} (2), by the simpleness, any 2-form in $\Gamma_2(Y_A(\alpha), \Om_{Y_A(\alpha)})$ is induced from a 2-form in $\Gamma_2(\mu_A^{-1}(0), \Om^2_{\mu_A^{-1}(0)})$. Since $\mu_A^{-1}(0)$ is an affine variety, we have 
\[\Gamma_2(\mu_A^{-1}(0), \Om^2_{\mu_A^{-1}(0)})=\bigoplus_{m=0}^r{\Gamma_2(\mu_{A_m}^{-1}(0), \Om^2_{\mu_{A_m}^{-1}(0)})}\oplus\bigoplus_{m\neq m'}{\Gamma_1(\mu_{A_{m}}^{-1}(0), \Om^1_{\mu_{A_m}^{-1}(0)})\otimes\Gamma_1(\mu_{A_{m'}}^{-1}(0), \Om^1_{\mu_{A_{m'}}^{-1}(0)})}.\] 
Now, by the similar argument as the proof of Proposition \ref{prop:keyreflexive} (3), we have 
\[\Gamma_1(\mu_{A_{m}}^{-1}(0), \Om^1_{\mu_{A_m}^{-1}(0)})=\bigoplus_{j\in I_m}{\C dz_j}\oplus\bigoplus_{j\in I_m}{\C dw_j}. \]
However, since we have $\bm{a_j}\neq\pm\bm{a_{j'}}$ for any $j\in I_m$, $j'\in I_{m'}$ ($m\neq m'$), any nonzero 2-form in $\bigoplus_{m\neq m'}{\Gamma_1(\mu_{A_{m}}^{-1}(0), \Om^1_{\mu_{A_m}^{-1}(0)})\otimes\Gamma_1(\mu_{A_{m'}}^{-1}(0), \Om^1_{\mu_{A_{m'}}^{-1}(0)})}$ is not $\T_\C^d$-invariant. In particular, such a 2-form will not descend to a 2-form on $Y_A(\alpha)$ (see the argument after Theorem \ref{fact:horizontal}). Thus, we only have to consider when a 2-form in $\bigoplus_{m=0}^r{\Gamma_2(\mu_{A_m}^{-1}(0), \Om^2_{\mu_{A_m}^{-1}(0)})}$ descends to a 2-form on $Y_A(\alpha)=\C^{2p}\times\prod_{m=1}^r{Y_{A_m}(\alpha_m)}$. Then, by Theorem \ref{thm:simpleindecomposable}, for $1\leq m\leq r$, we have 
\[\Gamma_2(Y_{A_m}(\alpha_m), \Om^2_{Y_{A_m}(\alpha_m)})=\C\om_m.\]

Next, we consider the general case, i.e., $Y_A(\alpha)$ is not necessarily simple or $\alpha$ is not generic. By comparing with the simplification, we have the following diagram: 
\[\begin{tikzcd}
Y_{\underline{A}}({\underline{\alpha}})\ar[d, "\pi_{\underline{\alpha}}"]&Y_A(\alpha)\ar[d, "\pi_\alpha"]\\
Y_{\underline{A}}(0)\ar[r, "\vphi"]&Y_A(0)
\end{tikzcd},\]
where we take a generic $\underline{\alpha}$. 
Then, by the argument in subsection 7.1, we have the following diagram: 
\[\begin{tikzcd}
\Gamma_2(\C^{2p}, \Om^2_{\C^{2p}})\oplus\bigoplus_{m=1}^r{{\C{\underline{\om}}_m}}\ar[d, equal]&\Gamma_2(\C^{2p}, \Om^2_{\C^{2p}})\oplus\bigoplus_{m=1}^r{{\C{\om}_m}}\ar[dd, hook]\\
\Gamma_2(Y_{\underline{A}}({\underline{\alpha}}), \Om^2_{Y_{\underline{A}}({\underline{\alpha}})})\ar[d, hook, "\wr"]&\\
\Gamma_2(Y_{\underline{A}}(0)_{\reg}, \Om^2_{Y_{\underline{A}}(0)_{\reg}})\ar[d, equal]&\Gamma_2(Y_A(\alpha)_{\reg}, \Om^2_{Y_A(\alpha)_{\reg}})\ar[d, hook]\\
\Gamma_2(\vphi^{-1}(Y_{{A}}(0)_{\reg}), \Om^2_{\vphi^{-1}(Y_{{A}}(0)_{\reg})})&\Gamma_2(Y_A(0)_{\reg}, \Om^2_{Y_A(0)_{\reg}})\ar[l, hook', "\vphi^*"]
\end{tikzcd}\]
Now, note that $\vphi^*$ is an identity on $\Gamma_2(\C^{2p}, \Om^2_{\C^{2p}})$ and sends the standard symplectic 2-form $\om_m$ to the standard symplectic 2-form $\underline{\om}_m$. Consequently, in the above diagram, each homomorphism is an isomorphism. This completes the proof of the first part of the statement. For the latter part, we only have to show that any (conical) symplectic 2-form on $\C^{2p}$ is given by $g^*\om_0$ for some $g\in GL(\C^{2p})$. This follows from the Darboux's theorem.   

\end{proof}


\begin{remark}\label{rem:auto}
For any conical symplectic 2-form $\om=g^*\om_0+\sum_{m=1}^r{\gamma_m\om_m}$ on a hypertoric variety $Y_A(\alpha)=\C^{2p}\times\prod_{m=1}^r{Y_{A_m}(\alpha_m)}$, we can take a $\Cstar$-equivariant automorphism 
\[\psi:=(g, {\sqrt{\gamma_1}}, \ldots, {\sqrt{\gamma_r}}) : Y_A(\alpha) \xrightarrow{\sim} Y_A(\alpha),\]
where ${\sqrt{\gamma_m}}\in \Aut^{\Cstar}(Y_{A_m}(\alpha_m))$ is the automorphism induced from the scalar multiplication by $\sqrt{\gamma_m}$. 
By general results \cite[Theorem 3.1]{Nam:equi}, for a conical symplectic variety $(Y, \om)$ and any conical symplectic form $\sigma$ on $Y$, there exists a $\Cstar$-equivariant automorphism $\psi : Y \to Y$ such that $\psi^*\sigma=\psi$. The above $\psi$ gives a such automorphism concretely for affine hypertoric varieties. 
\end{remark}

From now on, we consider an application of Proposition \ref{prop:generalform} to the classification of affine (or smooth) hypertoric varieties. In \cite[Theorem 4.2]{Nag}, we proved the following. 

\begin{theorem}{\rm (\cite[Theorem 4.2, Remark 4.3]{Nag})}\label{thm:classification}\\
For any two affine hypertoric varieties $Y_A(0)$ and $Y_{A'}(0)$, 
the following are equivalent: 
\begin{itemize}
\item[(1)] $Y_A(0)$ and $Y_{A'}(0)$ are $\Cstar$-equivariant isomorphic as symplectic varieties. 
\item[(2)] $(Y_A(0), \om)$ and $(Y_{A'}(0), \om')$ are $\Cstar\times\T_\C^{n-d}$-equivariant isomorphic as symplectic varieties, where $\T_\C^{n-d}$-action is the remaining Hamiltonian torus action. 
\end{itemize}
The same statement holds for any two smooth hypertoric varieties. 
\end{theorem}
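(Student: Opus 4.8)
\emph{Plan.} The implication (2)$\Rightarrow$(1) is trivial: forget the residual torus. So the content is (1)$\Rightarrow$(2): given a $\Cstar$-equivariant symplectic isomorphism $f\colon(Y_A(0),\om)\xrightarrow{\sim}(Y_{A'}(0),\om')$, one has to replace it by another such isomorphism which moreover intertwines the two residual Hamiltonian torus actions — necessarily up to an isomorphism of the tori, since the copy of $\T_\C^{n-d}$ acting on each side carries no canonical identification with the other. The strategy is the ``conjugacy of maximal tori'' argument: exhibit the transported torus $f\circ\T_\C^{n-d}\circ f^{-1}$ and the standard residual torus of $Y_{A'}(0)$ as maximal tori of one and the same automorphism group, conjugate one onto the other, and compose.

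First I would set $G:=\Aut^{\Cstar}(Y_A(0),\om)$, the group of $\Cstar$-equivariant automorphisms preserving $\om$, and likewise $G'$ for $Y_{A'}(0)$; since $f^*\om'=\om$, conjugation by $f$ is an isomorphism $G\xrightarrow{\sim}G'$. As $\C[Y_A(0)]=\bigoplus_{i\ge 0}R_i$ is finitely generated, positively graded, with $R_0=\C$, every $\Cstar$-equivariant automorphism is determined by its effect on a finite-dimensional graded subspace containing a set of generators; hence $G$ is an affine algebraic group of finite type, and commuting with $\Cstar$ and preserving the weight-$2$ Poisson bracket are closed conditions. (This is also the conical-symplectic instance of Namikawa's algebraicity of $\Cstar$-equivariant Poisson automorphism groups.) Write $G^\circ$ for the identity component, and similarly $(G')^\circ$.

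The key point is that the residual torus is a \emph{maximal} torus of $G$. On one hand, any $\Cstar$-equivariant torus $T\subset G$ acts on $Y_A(0)$ Hamiltonianly: for $\xi\in\Lie(T)$ the induced vector field $\xi_Y$ is symplectic and has $\Cstar$-weight $0$, so $\iota_{\xi_Y}\om$ is a closed $1$-form on $Y_A(0)_{\reg}$ of weight $2$; if $E$ denotes the Euler vector field of the conical action, Cartan's formula gives $\iota_{\xi_Y}\om=\tfrac12\,d\!\left(\iota_E\iota_{\xi_Y}\om\right)$, so it is exact and $\xi_Y$ is Hamiltonian. A Hamiltonian torus action has isotropic generic orbits, so $\dim T\le\tfrac12\dim Y_A(0)=n-d$. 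On the other hand, the standard residual torus $\T_\C^{n-d}=\T_\C^{n}/\T_\C^{d}$ acts on $Y_A(0)$ with finite generic stabilizer, since a generic point of $\mu^{-1}(0)$ has trivial $\T_\C^{n}$-stabilizer (cf.\ Theorem \ref{thm:PW}), so its generic orbit has dimension $n-d$; hence it is a maximal torus of $G$. The same reasoning makes the standard residual torus $T_{A'}$ of $Y_{A'}(0)$ a maximal torus of $G'$.

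Now conjugation by $f$ carries the maximal torus $\T_\C^{n-d}$ of $G$ to the torus $T_f:=f\circ\T_\C^{n-d}\circ f^{-1}$, which is therefore a maximal torus of $G'$. Both $T_f$ and $T_{A'}$ lie in the connected group $(G')^\circ$, where all maximal tori are conjugate, so there is $g\in(G')^\circ\subset\Aut^{\Cstar}(Y_{A'}(0),\om')$ with $g\,T_f\,g^{-1}=T_{A'}$. Then $\phi:=g^{-1}\circ f$ is a $\Cstar$-equivariant symplectic isomorphism whose conjugation action maps $\T_\C^{n-d}$ isomorphically onto $T_{A'}$; calling $\tau$ the resulting group isomorphism, one gets $\phi(t\cdot y)=\tau(t)\cdot\phi(y)$, i.e.\ $\phi$ is $\Cstar\times\T_\C^{n-d}$-equivariant. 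The smooth case runs identically: for generic $\alpha$ the residual torus still acts on $Y_A(\alpha)$ with half-dimensional generic orbits (since $\pi_\alpha$ is an isomorphism over a dense open set), $\Aut^{\Cstar}(Y_A(\alpha),\om)$ is again a linear algebraic group, so the residual torus is again maximal and the same conjugation argument applies. \emph{The main obstacle} is exactly the input used above: that $\Aut^{\Cstar}(Y_A(0),\om)$ — and, in the smooth case, $\Aut^{\Cstar}(Y_A(\alpha),\om)$, which is not affine — is a finite-type linear algebraic group, so that maximal tori and their conjugacy make sense, together with the Hamiltonicity/rank estimate pinning down the residual torus as a maximal torus. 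For the non-affine smooth case one either invokes algebraicity of $\Cstar$-equivariant automorphism groups of conical symplectic resolutions, or descends the problem to $Y_A(0)$, corrects there, and checks that the correcting automorphism lifts to the chosen resolution.
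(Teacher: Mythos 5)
Your argument is sound, but note that this paper does not actually prove Theorem \ref{thm:classification}: it is imported wholesale from \cite[Theorem 4.2, Remark 4.3]{Nag}, where the implication (1)$\Rightarrow$(2) is obtained quite indirectly --- a $\Cstar$-equivariant symplectic isomorphism induces an isomorphism of universal Poisson deformations, from which the matroid $M(A)$ is recovered up to isomorphism, hence $A\sim A'$, and then Lemma \ref{lem:fund} produces an explicit $\Cstar\times\T_\C^{n-d}$-equivariant symplectic isomorphism. Your route --- realizing both the transported torus $T_f$ and the standard residual torus as maximal tori of the linear algebraic group $\Aut^{\Cstar}(Y_{A'}(0),\om')$ and conjugating inside the identity component --- is genuinely different and more self-contained: it needs no deformation theory, only the weight computation showing that every $\Cstar$-equivariant symplectic torus action is Hamiltonian with isotropic generic orbits (hence of rank at most $n-d$) together with the observation that the residual torus attains this bound because a generic point of $\mu^{-1}(0)$ has trivial $\T_\C^n$-stabilizer. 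What your argument does not buy is the combinatorial invariant (iii) of Remark \ref{rem:combinatorial}, which is the real content of \cite{Nag} and the input for Corollary \ref{cor:classification}. Two points you should still nail down: first, the algebraicity (finite type) of $\Aut^{\Cstar}(Y_A(0),\om)$ --- your sketch via a finite-dimensional generating graded subspace is the right one, but it carries the whole weight of the conjugacy argument and deserves a full sentence or a reference to \cite{Nam:equi}; second, in the smooth case, the lifting of the correcting automorphism $g\in(G')^{\circ}$ from $Y_{A'}(0)$ to $Y_{A'}(\alpha')$ --- here you should say explicitly that $g$, lying in the identity component, acts trivially on $\operatorname{Pic}(Y_{A'}(\alpha'))$, hence preserves the ample cone of the resolution $\pi_{\alpha'}$, so the induced birational self-map of $Y_{A'}(\alpha')$ is biregular and the torus-equivariance established on the dense open set $\pi_{\alpha'}^{-1}(Y_{A'}(0)_{\reg})$ propagates to all of $Y_{A'}(\alpha')$ by separatedness.
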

\begin{remark}(Combinatorial classification)\label{rem:combinatorial}\\
For two affine hypertoric varieties $Y_A(0)$ and $Y_{A'}(0)$, the above conditions are equivalent to $A\sim A'$ (cf.\ \cite[Theorem 4.2]{Nag}). In other words, $M(A)\cong M(A')$, where $M(A)$ is the associated vector matroid. Similarly, for two smooth ones $Y_A(\alpha)$ and $Y_{A'}(\alpha')$, the above conditions are equivalent to that the hyperplane arrangements $\calH_B^{\alpha}$ and $\calH_{B'}^{\alpha'}$ define the same {\it zonotope tiling} in the sense of \cite{AP}. 
\end{remark}

Now, by Remark \ref{rem:auto}, if two affine (resp. smooth) hypertoric varieties are $\Cstar$-equivariant isomorphic to each other, then the isomorphism can be replaced by an $\Cstar$-equivariant isomorphism as symplectic varieties. Thus, we have the following refined version of the above theorem. 
\begin{corollary}\label{cor:classification}
For any two affine hypertoric varieties $Y_A(0)$ and $Y_{A'}(0)$, 
the following are equivalent: 
\begin{itemize}
\item[(1)] $Y_A(0)$ and $Y_{A'}(0)$ are $\Cstar$-equivariant isomorphic as algebraic varieties. 
\item[(2)] $(Y_A(0), \om)$ and $(Y_{A'}(0), \om')$ are $\Cstar\times\T_\C^{n-d}$-equivariant isomorphic as symplectic varieties, where $\T_\C^{n-d}$-action is the remaining Hamiltonian torus action. 
\end{itemize}
The same statement holds for any two smooth hypertoric varieties.   
\end{corollary}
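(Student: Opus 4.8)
The implication $(2)\Rightarrow(1)$ is trivial, since a $\Cstar\times\T_\C^{n-d}$-equivariant isomorphism of symplectic varieties is in particular a $\Cstar$-equivariant isomorphism of algebraic varieties. The content is $(1)\Rightarrow(2)$, and the plan is to promote a bare $\Cstar$-equivariant algebraic isomorphism to one that in addition identifies the standard symplectic forms, and then to quote Theorem \ref{thm:classification}.

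Concretely, given a $\Cstar$-equivariant isomorphism $f\colon Y_A(0)\xrightarrow{\sim}Y_{A'}(0)$ of algebraic varieties, I would first observe that $f$ carries the regular locus to the regular locus, so that $\sigma:=f^*\om'$ is a well-defined algebraic $2$-form on $Y_A(0)_{\reg}$; it is closed and nondegenerate because $\om'$ is, and the $\Cstar$-equivariance of $f$ together with $s^*\om'=s^2\om'$ forces $s^*\sigma=s^2\sigma$, so $\sigma$ is a homogeneous symplectic $2$-form of weight $2$ on $Y_A(0)_{\reg}$. Next I would feed $\sigma$ into Proposition \ref{prop:generalform} (with $\alpha=0$): writing $Y_A(0)\cong\C^{2p}\times\prod_{m=1}^r Y_{A_m}(0)$ for the decomposition into indecomposable pieces, one gets $\sigma=g^*\om_0+\sum_{m=1}^r\gamma_m\om_m$ for some $g\in GL(\C^{2p})$ and $\gamma_m\in\Cstar$. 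Then Remark \ref{rem:auto} supplies a $\Cstar$-equivariant automorphism $\psi:=(g,\sqrt{\gamma_1},\dots,\sqrt{\gamma_r})$ of $Y_A(0)$ with $\psi^*\sigma=\om$.

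Setting $h:=f\circ\psi$ gives a $\Cstar$-equivariant isomorphism $h\colon Y_A(0)\to Y_{A'}(0)$ with $h^*\om'=\psi^*f^*\om'=\psi^*\sigma=\om$, i.e.\ a $\Cstar$-equivariant isomorphism of symplectic varieties; Theorem \ref{thm:classification}, namely the equivalence $(1)\Leftrightarrow(2)$ there, then upgrades this to a $\Cstar\times\T_\C^{n-d}$-equivariant isomorphism of symplectic varieties, which is $(2)$. For the smooth statement the argument is identical, using $Y_A(\alpha)_{\reg}=Y_A(\alpha)$, the forms of Proposition \ref{prop:generalform} and Remark \ref{rem:auto} valid for arbitrary $\alpha$, and the smooth part of Theorem \ref{thm:classification}; combining with \cite[Theorem 4.2, Remark 4.3]{Nag} (cf.\ Remark \ref{rem:combinatorial}) one also obtains the combinatorial reformulations $(\mathrm{iii})$ and $(\mathrm{iii}')$ mentioned in the introduction.

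I do not expect a serious obstacle: the real work has already been carried out in Proposition \ref{prop:generalform}. The one place that needs a little care is the passage from $f$ to $\sigma$ — namely that a \emph{merely} $\Cstar$-equivariant algebraic isomorphism automatically pulls $\om'$ back to a weight-$2$ symplectic $2$-form lying in the space computed in Proposition \ref{prop:generalform}. The point is that Proposition \ref{prop:generalform} describes \emph{all} homogeneous $2$-forms of weight $2$ on the regular locus, not only the $\T_\C^{n-d}$-invariant ones, so no torus-equivariance of $f$ is needed at that stage; the remaining Hamiltonian torus action re-enters only at the very end, through Theorem \ref{thm:classification}.
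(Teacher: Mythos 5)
Your argument is correct and is essentially the paper's own proof: the paper likewise pulls back $\om'$ along the given $\Cstar$-equivariant isomorphism, invokes Proposition \ref{prop:generalform} and the explicit automorphism $\psi$ of Remark \ref{rem:auto} to normalize the resulting conical symplectic form to $\om$, and then applies Theorem \ref{thm:classification}. Your added remark that Proposition \ref{prop:generalform} classifies \emph{all} weight-$2$ homogeneous $2$-forms (so no torus-equivariance of $f$ is needed at that stage) is exactly the point that makes the reduction work.
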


\begin{remark}
This corollary can be seen as the hypertoric analogue of Berchtold's theorem \cite{Ber} on toric varieties, which says that for any two toric varieties, they are isomorphic as abstract varieties if and only if they are isomorphic as toric varieties. 
\end{remark}

By Remark \ref{rem:combinatorial}, this corollary says that the classification of the associated combinatorial objects $A$ (resp. $\calH_B^{\alpha}$) give the classification of hypertoric varieties $Y_A(0)$ (resp. $Y_A(\alpha)$) up to $\Cstar$-equivariant isomorphisms as (abstract) algebraic varieties. 






\end{document}